\newcounter{corr}
\definecolor{violet}{rgb}{0.580,0.,0.827}
\newcommand{\corr}[3]{\typeout{Warning : a correction remains in page
\thepage}
				\stepcounter{corr}        
				{\color{blue}\ifmmode\text{\,\sout{\ensuremath{#1}}\,}\else\sout{#1}\fi}
       {\color{red}#2}
       {\color{violet} #3}}
\numberwithin{equation}{section}
\def\ctel#1{\ensuremath{\Cl[ctrcst]{#1}}}
\def\cter#1{\ensuremath{\Cr{#1}}}
 \newtheorem{thm}{Theorem}[section]
 \newtheorem{lem}[thm]{Lemma}
 \newtheorem{remark}[thm]{Remark}
 \newtheorem{definition}[thm]{Definition}
 \def\Id{\mathop{\rm Id}\nolimits}
 \def\dist{\mathop{\rm \delta}\nolimits}
\newcommand{\sym}{{\rm s}}
\newcommand{\asym}{{\rm a}}
\newcommand{\dsp}{\displaystyle} 
\newcommand{\mathbi}[1]{{\boldsymbol #1}}
\newcommand{\bv}{\mathbi{v}}
\newcommand{\bF}{\mathbi{F}}
\newcommand{\bvarphi}{\mathbi{\varphi}}
\newcommand{\disc}{{h}}
\renewcommand{\div}{{\mathop{\rm div}}}
\newcommand{\wunp}{{H_{\agrad}}}
\newcommand{\lp}{L}
\newcommand{\lpd}{\mathbi{\lp}}
\newcommand{\hdiv}{\mathbi{H}_\adiv}
\newcommand{\adiv}{\textsc{D}}
\newcommand{\agrad}{\textsc{\bf G}}
\newcommand{\api}{\textsc{P}}
\newcommand{\wn}{{\mathfrak{S}}}
\newcommand{\N}{\mathbb N}
\renewcommand{\phi}{\varphi}
\newcommand{\R}{\mathbb R}
\newcommand{\ie}{\emph{i.e.\/}}
\newcommand{\norm}[2]{\left\Vert#1\right\Vert_{#2}}
\newcommand{\consist}{\sigma}
\newcommand{\limconf}{\zeta}
\newcommand{\disT}{\delta^{(T)}}
\newcommand{\conssT}{\widehat{\sigma}^{(T)}}
\newcommand{\limcT}{\zeta^{(T)}}
\newcommand{\ba}{\begin{array}{llll}   }
\newcommand{\ea}{\end{array}}
\newcommand{\be}{\begin{equation}}
\newcommand{\ee}{\end{equation}}
\begin{document}	

\title{Optimal error estimates for non-conforming approximations of linear parabolic problems with minimal regularity}
\author{J. Droniou\thanks{School of Mathematics, Monash University, Melbourne, Australia, {\tt jerome.droniou@monash.edu}}, R. Eymard\thanks{Universit\'e Gustave Eiffel, LAMA, (UMR 8050), UPEM, UPEC, CNRS, F-77454, Marne-la-Vallée (France), {\tt robert.eymard@univ-eiffel.fr}}, T. Gallou\"{e}t\thanks{I2M UMR 7373, Aix-Marseille Universit\'e, CNRS, Ecole Centrale de Marseille,
F-13453 Marseille, France, {\tt thierry.gallouet@univ-amu.fr}}, C. Guichard\thanks{Sorbonne Université, CNRS, Université Paris Cité, Laboratoire
Jacques-Louis Lions (LJLL), F-75005 Paris, France, {\tt cindy.guichard@sorbonne-universite.fr}} and R. Herbin\thanks{ I2M UMR 7373, Aix-Marseille Universit\'e, CNRS, Ecole Centrale de Marseille,
F-13453 Marseille, France, {\tt raphaele.herbin@univ-amu.fr}}
}
\maketitle

\begin{abstract} We consider a general linear parabolic problem with extended time boundary conditions (including initial value problems and periodic ones), and approximate it by the implicit Euler scheme in time and the Gradient Discretisation method in space; the latter is in fact a class of methods that includes conforming and nonconforming finite elements, discontinuous Galerkin methods and several others. 
The main result is an error estimate which holds without supplementary regularity hypothesis on the solution. 
This result states that the approximation error has the same order as the sum of the interpolation error and the conformity error. 
The proof of this result relies on an inf-sup inequality in Hilbert spaces which can be used both in the continuous and the discrete frameworks. 
The error estimate result is illustrated by numerical examples with low regularity of the solution.
\end{abstract}

{\bf Keywords:} {linear parabolic problem, optimal error estimate, gradient discretisation method, inf-sup inequality }

{\bf AMS class (2010):} {65N30, 35K15, 47A07}

\section{Introduction}
Let us first recall the a priori error estimate which holds for the approximation, by a conforming method, of the homogeneous Dirichlet problem in a bounded nonempty open set $\Omega$ of $\mathbb{R}^d$, $d\in\mathbb{N}^\star$.
Let $\overline{u}\in H^1_0(\Omega)$ and $u_\disc\in U_\disc\subset H^1_0(\Omega)$ (where $U_\disc$ is a finite dimensional vector space), be the respective solutions of 
\[
 \forall v\in H^1_0(\Omega),\ \langle \nabla \overline{u},\nabla v\rangle_{L^2} =  \langle f,v\rangle_{L^2} 
\]
and
\[
 \forall v\in U_\disc,\  \langle \nabla u_\disc,\nabla v\rangle_{L^2} =  \langle f,v\rangle_{L^2},
\]
where $f\in L^2(\Omega)$ is given, and where   $\langle \cdot,\cdot\rangle_{L^2}$ denotes the scalar product in $L^2(\Omega)^d$ or in $L^2(\Omega)$. 
It is well-known that  C\'ea's Lemma \cite{cea1964} yields the following error estimate:
\[
 \inf_{v\in U_\disc} \dist(\overline{u},v) \le \dist(\overline{u},u_\disc) \le (1+ {\rm diam}(\Omega)) \inf_{v\in U_\disc} \dist(\overline{u},v),
\]
where, for any $v\in U_\disc$, $\dist(\overline{u},v)$ measures the distance between the element $\overline{u}\in H^1_0(\Omega)$ and the element $v\in U_\disc$, as defined by
\[
 \dist(\overline{u},v)^2 = \Vert \nabla \overline{u} - \nabla  v\Vert_{L^2}^2+\Vert\overline{u} -  v\Vert_{L^2}^2.
\]
The above error estimate is optimal, since it shows that the approximation error $\dist(\overline{u},u_\disc)$ has the same order as that of the interpolation error $\inf_{v\in U_\disc} \dist(\overline{u},v)$. Such a generic error estimate is then used for determining the order of the method if the solution shows more regularity, leading to an interpolation error controlled by higher order derivatives of the solution.

Turning to approximations of the function $\overline{u}$ which are  nonconforming (\ie~ no longer belonging to the space where the unique solution of the problem lives), we consider the framework of the Gradient Discretisation method (GDM), which encompasses conforming approximations, as well as nonconforming finite elements or discontinuous Galerkin methods  \cite{gdm}. In this framework, the approximation of $\overline{u}$ is defined as $u_\disc\in X_\disc$ (the finite dimensional vector space on $\mathbb{R}$ associated with the degrees of freedom of the approximate solution), such that
\[
 \forall v\in X_\disc,\ \langle \agrad_\disc u_\disc,\agrad_\disc v\rangle_{L^2} =  \langle f,\api_\disc  v\rangle_{L^2}.
\]
In the above equation, for any $v\in X_\disc$, the function  $\api_\disc v\in L^2(\Omega)$ is the function reconstructed from the degrees of freedom, defined on $\Omega$, and $\agrad_\disc v\in L^2(\Omega)^d$ stands for the reconstruction of its approximate gradient. 
Then the following error estimate \cite[Theorem 2.28]{gdm} is a reformulation of G. Strang's second lemma \cite{str-72-var}:
\[
\frac 1 2 \left[\limconf_\disc(\nabla \overline{u}) + \inf_{v\in X_\disc} \dist(\overline{u},v)\right]\le  \dist(\overline{u},u_\disc) \le (1+ p_\disc)  \left[\limconf_\disc(\nabla \overline{u}) + \inf_{v\in X_\disc} \dist(\overline{u},v)\right],
\]
where $\dist(\overline{u},v)$, which measures the distance between the element $\overline{u}\in H^1_0(\Omega)$ and the element $v\in X_\disc$, is such that
\[
 \dist(\overline{u},v)^2 = \Vert \nabla \overline{u} - \agrad_\disc  v\Vert_{L^2}^2+\Vert\overline{u} - \api_\disc  v\Vert_{L^2}^2,
\]
and $\limconf_\disc(\nabla \overline{u})$, which measures the conformity error of the method (it vanishes in the case of conforming methods), is defined by
\[
 \limconf_\disc(\nabla \overline{u}) = \max_{v\in X_\disc\setminus \{0\} } \frac {\langle \nabla \overline{u},\agrad_\disc  v\rangle_{L^2} -  \langle {\rm div}(\nabla \overline{u}),\api_\disc  v\rangle_{L^2}} { \Vert\agrad_\disc  v\Vert_{L^2} }.
\]
The value $p_\disc$ is associated to the discrete Poincar\'e inequality 
\begin{equation}\label{eq:defph}
  \Vert\api_\disc  v\Vert_{L^2} \le p_\disc \Vert \agrad_\disc  v\Vert_{L^2},\hbox{ for all }v\in X_\disc.
\end{equation}
In the case where $p_\disc$ is bounded independently of the accurateness of the approximation (for example, for  mesh-based methods, $p_\disc$ only depends on a regularity factor of the meshes), this error estimate is again optimal: it shows the same order for the approximation error and for the sum of the interpolation and conformity errors.

\medskip

Hence, in the conforming case, the order of the method is only determined by the interpolation properties of $U_\disc$, and in the nonconforming one, by the interpolation and conformity properties of $(X_\disc,\api_\disc,\agrad_\disc)$.

\medskip

In the case of parabolic problems, a large part of the literature only provides error estimates assuming supplementary regularity of the solution. For example, in \cite{droniou2017error}, an error estimate is established for the GDM approximation of the heat equation  under the condition that the exact solution of the problem belongs to the space $W^{1,\infty}(0,T;W^{2,\infty}(\Omega))$. 
Error estimate results for linear parabolic problems in the spirit of C\'ea's Lemma have only recently been published. These results are based on variational formulations of the parabolic problem and on an inf-sup inequality satisfied by the involved bilinear form (see \cite[XVIII.3 Th\'eor\`eme 2]{DL} for first results, and \cite[III Proposition 2.3 p.112]{Sho97} for a more complete formulation); they concern either semi-discrete numerical schemes (continuous in time, discrete in space), see for example \cite{chryhou2002err,tant2016proj}, or fully discrete time-space problems \cite{schwab2009parab,boiv2019app,urban2012error,saito2021var}.
In \cite{arendt2023spacetime}, similar optimal results are obtained for the full time-space approximation of linear parabolic partial differential equation, using Euler schemes or a discontinuous Galerkin scheme in time, together with conforming approximations. 
Let us more precisely describe the result obtained in \cite{arendt2023spacetime}, in the case of the implicit Euler scheme for the heat equation. 
Let $T>0$, $\xi_0\in L^2(\Omega)$ and $f\in L^2(0,T;L^2(\Omega))$ be given and let $\overline{u}\in W := H^1(0,T;H^{-1}(\Omega))\cap L^2(0,T;H^1_0(\Omega))$ (equivalently $W = \{ u \in  L^2(0,T;H^1_0(\Omega)) : \partial_t u \in  L^2(0,T;H^{-1}(\Omega))\}$) be the solution of: $\overline{u}(0) = \xi_0$ and, for a.e. $t\in(0,T)$,
\[
\forall v\in H^1_0(\Omega),\  \langle \partial_t \overline{u}(t),v\rangle_{H^{-1},H^1_0} + \langle \nabla \overline{u}(t),\nabla v\rangle_{L^2} =  \langle f(t),v\rangle_{L^2}.
\]
The existence and uniqueness of $\overline{u}$ is due to J-L. Lions  \cite[Théorème 1.1 p.46]{Lio61}, see also \cite[Théorème 4.29]{castor}. 
Let $N\in\mathbb{N}^\star$ and $U_\disc\subset H^1_0(\Omega)$ be given (as above, $U_\disc$ is assumed to be a finite dimensional vector space). Let $u_\disc := (u^{(m)})_{m=0,\ldots,N}\in W_\disc := U_\disc^{N+1}$ be the solution of: $u^{(0)} = \mathcal{P}^{L^2}_{U_\disc}(\xi_0)$ (orthogonal projection on $U_\disc$ in $L^2(\Omega)$) and, for $m=1,\ldots,N$, 
\[
\forall v\in U_\disc,\ \langle \frac {u^{(m)}- u^{(m-1)}} k,v\rangle_{L^2} + \langle \nabla u^{(m)},\nabla v\rangle_{L^2} =  \langle f^{(m)},v\rangle_{L^2},
\]
with $k=T/N$ and $f^{(m)} = \frac 1 k \int_{(m-1)k}^{mk} f(t){\rm d}t$. Then it is shown in \cite{arendt2023spacetime} that
\[
\inf_{v\in W_\disc} \disT(\overline{u},v)\le  \disT(\overline{u},u_\disc) \le C  \inf_{v\in W_\disc} \disT(\overline{u},v),
\]
where $ \disT(\overline{u},v)$ is a suitable distance between the elements of $W$ and those of $W_\disc$, and $C$ only depends on $T$ and $\Omega$. Note that the common bilinear form, for which inf-sup inequalities cover both the discrete and the continuous case, is not conforming in $W$.

\medskip

The present work establishes an optimal error estimate result for the full time-space approximation of linear parabolic partial differential equation, using the implicit Euler scheme together with the GDM for the approximation of the continuous operators, without assuming a stronger regularity than the natural hypothesis $\overline{u}\in W$. 
Our analysis  also includes conforming methods with mass lumping: the latter technique is widely used, for stability reasons, in the real life implementation of conforming finite element methods for parabolic problems. 
Indeed, the implementation of the mass lumping, often viewed as a numerical integration approximation, is in fact a change of the approximation space which yields a conformity error (see, e.g., the presentation in \cite[Section 8.4]{gdm}), and the resulting implicit Euler method is thus a doubly non conforming method, both in space and in time. 

\medskip

 Let us describe such a doubly non conforming scheme in the case of the discretisation of the heat equation. Given $(X_\disc,\api_\disc,\agrad_\disc)$ for the nonconforming approximation of an elliptic problem by the GDM, the time-space approximation is defined through the knowledge of $u_\disc := (u^{(m)})_{m=0,\ldots,N}\in W_\disc := X_\disc^{N+1}$, solution of:
$\api_\disc u^{(0)} = \mathcal{P}^{L^2}_{\api_\disc(X_\disc)}(\xi_0)$ (orthogonal projection on $\api_\disc(X_\disc)$ in $L^2(\Omega)$) and, for $m=1,\ldots,N$, 
\[
\forall v\in X_\disc,\ \langle \api_\disc \frac {u^{(m)}- u^{(m-1)}} k,\api_\disc v\rangle_{L^2} + \langle \agrad_\disc u^{(m)},\agrad_\disc v\rangle_{L^2} =  \langle f^{(m)},\api_\disc v\rangle_{L^2},
\]
defining $k$ and $f^{(m)}$ as above. 
Then our main result (expressed in Theorem \ref{thm:errest}) states that
\[
\frac 1 2 \left[\limcT_\disc(\bv) + \inf_{v\in W_\disc} \disT(\overline{u},v)\right]\le  \disT(\overline{u},u_\disc) \le C_\disc \left[\limcT_\disc(\bv) + \inf_{v\in W_\disc} \disT(\overline{u},v)\right],
\]
where $\bv\in L^2(0,T;H_{\div}(\Omega))$ is computed from $\overline{u}$ by \eqref{eq:defbv}, $\limcT_\disc(\bv)$ defined by \eqref{abs:defwdiscV} again measures the conformity error of the method (and again vanishes in the case of conforming methods), and $\disT(\overline{u},v)$ measures the distance between the element $\overline{u}\in W$ and the element $v\in W_\disc$ (see \eqref{eq:def.Suv}). 
The real number $C_\disc$ depends continuously on $p_\disc$ (see \eqref{eq:defph}) which remains bounded for any reasonable nonconforming method \cite{gdm}.
This error estimate is established in the case of nonconforming methods for a general parabolic problem with general time conditions which include periodic boundary conditions. 

\medskip

This paper is organized as follows. 
In Section \ref{sec:pbcont}, we establish the continuous framework for parabolic problems with generic Cauchy data (initial or periodic, for example).  
In Section \ref{sec:disc}, we recall the general setting of the gradient discretisation method (GDM) and define the  GDM for the approximation of space-time parabolic problems.
Section \ref{sec:error_estimate} is concerned with Theorem \ref{thm:errest}, which is our main result and which states the error estimate between the space-time GDM approximation and the exact solution under the natural regularity assumptions given by the existence and uniqueness theorem of Section \ref{sec:pbcont}. The proof of this theorem relies on a series of technical lemmas establishing an inf-sup property on a bilinear form involved in the continuous and the discrete formulation.
In Section \ref{sec:inter}, interpolation results are proved on a dense subspace of the solution space, hence leading to convergence results.
Finally, Section \ref{sec:num} provides a numerical confirmation of the error estimate result, on problems with low regularity solutions.
In the examples that are considered here, the conformity error (which in one case includes the effect of mass lumping) is smaller than the interpolation error.

\section{The parabolic problem}\label{sec:pbcont}

Let $\lp$ and $\lpd$ be separable Hilbert spaces;
let $\wunp\subset\lp$ be a dense subspace of $\lp$ and let $\agrad:\wunp\to \lpd$ be a linear operator whose graph $\mathcal{G} = \{ (u,\agrad u), u\in \wunp\}$ is closed in $\lp\times\lpd$.

As a consequence, $\wunp$ endowed with the graph norm $\norm{u}{\wunp,\mathcal{G}}^2 = \norm{u}{\lp}^2 + \norm{\agrad u}{\lpd}^2$ is a Hilbert space continuously embedded in $\lp$.   We assume that the graph norm is equivalent to $\norm{\agrad u}{\lpd}$, which means that there exists a Poincar\'e constant $C_P$ such that
\begin{equation}\label{eq:poincare}
 \norm{u}{\lp}\le C_P \norm{\agrad u}{\lpd}\hbox{ for all }u\in \wunp.
\end{equation}
As a consequence, we use from hereon the norm $\norm{{\cdot}}{\wunp}:= \norm{{\agrad \cdot}}{\lpd}$ on $\wunp$.
Since $\lp\times\lpd$ is separable, $\wunp$ is also separable for the norm $\norm{\cdot}{\wunp}$ (see  \cite[Ch. III]{brezis}).

\begin{remark} In the case of the heat equation, considering homogeneous Dirichlet boundary conditions, we let $\lp = L^2(\Omega)$, $\lpd = L^2(\Omega)^d$ and $\agrad u = \nabla u$. 
 If we consider an initial value problem with homogeneous Neumann boundary conditions, it is possible to consider $\lpd = L^2(\Omega)^d\times L^2(\Omega)$ and $\agrad u = (\nabla u,u)$, using the change of variable $w(t) = \exp(-t)u(t)$. This change of variable is no longer possible in the case of periodic time boundary conditions. Notice that the solution may be periodic in the case of some zero mean value right-hand-side: in this case, it is possible to choose  $\lpd = L^2(\Omega)^d\times \mathbb{R}$ and $\agrad u = (\nabla u,\int_\Omega u(\bm{x}){\rm d}\bm{x})$.
\end{remark}

\medskip

In the following, the notation $\langle \cdot, \cdot \rangle_{Z}$ denotes the inner product in a given Hilbert space $Z$, and $\langle \cdot, \cdot \rangle_{Z',Z}$ denotes the duality action in a given Banach space $Z$ whose dual space is denoted $Z'$. 
Define $\hdiv$ by: 
\be
\hdiv = \{ \bv\in \lpd\,:\, \exists w\in\lp, \forall u\in\wunp, \langle \bv,\agrad u\rangle_{\lpd} + \langle w,u\rangle_{\lp} = 0\}.
\label{abs:defhdiv}\ee
The density of $\wunp$ in $\lp$ implies (and is actually equivalent to) the following property.
\be\label{wunp.prop}
\mbox{For all $w\in\lp$, }(\forall u\in\wunp,  \langle w,u\rangle_{\lp} = 0)\Rightarrow w = 0.
\ee
Therefore, for any $\bv\in\hdiv$, the element $w\in\lp$ whose existence is assumed in \eqref{abs:defhdiv} is unique; this defines a linear operator $\adiv:\hdiv\to\lp$,   so that 
\begin{equation}
 \forall u\in\wunp,\ \forall \bv\in\hdiv,\ \langle \bv,\agrad u\rangle_{\lpd} + \langle \adiv\bv,u\rangle_{\lp} = 0.\label{abs:stokes-formula}
\end{equation}
It easily follows from this that the graph of $\adiv$ is closed in $\lpd\times \lp$, and therefore that, endowed with the graph norm $\norm{\bv}{\hdiv} = \norm{\bv}{{\lpd}} + \norm{\adiv\bv}{\lp}$, $\hdiv$ is a Hilbert space continuously embedded and dense in $\lpd$ (see \cite[Theorem 5.29 p.168]{Kato1995}).

The continuous framework for linear parabolic problems with general time boundary conditions starts by the usual identification of the space $\lp$ with a subspace of ${\wunp}'$ by letting
\[
 \langle y,u\rangle_{\wunp',\wunp} = \langle y,u\rangle_{\lp},\hbox{ for all }y\in \lp, \  u\in \wunp.
\]
This identification yields the Gelfand triple
\[   {\wunp}\stackrel{d}{\hookrightarrow} \lp\hookrightarrow {\wunp}',
\]
where the superscript $d$ recalls that the first embedding is dense.
Let $T>0$, and recall that we may identify  the dual space ${L^2(0,T;\wunp)}'$ with $L^2(0,T;{\wunp}')$ and the space ${L^2(0,T;\lp)}'$ with $L^2(0,T;{\lp})$; hence we have a further Gelfand triple
\[   
    {L^2(0,T;\wunp)}\stackrel{d}{\hookrightarrow} {L^2(0,T;\lp)} \hookrightarrow {L^2(0,T;\wunp)'}.
\] 

The classical space $W$ associated with the Gelfand triple is defined by 
\begin{equation*}
\begin{aligned}
 W = \Big\{u\in {L^2(0,T;\wunp)};\ \exists C\ge 0&\mbox{ such that }
    \langle u, v'\rangle_{{L^2(\lp)}}\le C\Vert v\Vert_{{L^2(\wunp)}}\\
    &\mbox{ for all }v\in C^1_c((0,T);\wunp)\Big\}.
\end{aligned}
\end{equation*}
The ``time derivative'' of $u\in W$ may then be defined as the element of the space $ L^2(0,T;\wunp)'$  identified with the space $L^2(0,T;\wunp')$ such that
\begin{equation*}
  \langle u',v\rangle_{L^2(\wunp)',{L^2(\wunp)}} := -\langle u, v'\rangle_{{L^2(\lp)}} \mbox{ for all }  v\in C^1_c((0,T);\wunp).
\end{equation*}
Note that here as well as in the rest of this paper, for a given space $Z$ we use in the dual products and norms the notation $L^2(Z)$ (resp. $H^p(Z)$ for $p=1,2$) as an abbreviation for $L^2(0,T;Z)$ (resp.  $H^p(0,T;Z)$ for $p=1,2$).
In other words, we can write $W$ as follows, introducing also a Hilbert structure,
\begin{equation*}
\begin{aligned}
&W=H^1(0,T;{\wunp}')\cap L^2(0,T;{\wunp})\\
&\hbox{ with }\Vert v\Vert_W :=\Big(\Vert v\Vert_{L^2(\wunp)}^2+\Vert v'\Vert_{L^2(\wunp)'}^2\Big)^{\nicefrac12}\mbox{ for all } v\in W.
\end{aligned}
\end{equation*}

The space $W$ can be identified with a subspace of $C([0,T];\lp)$ and there exists $C_T>0$ such that
\begin{equation}\label{eq:embWinH}
\sup_{t\in[0,T]}\Vert v(t)\Vert_{\lp}\le C_T \Vert v\Vert_{W},\hbox{ for all }v\in W.
\end{equation}

Recall the following integration by parts formula (\cite[III Corollary 1.1 p.106]{Sho97}). 
\begin{lem}\label{lem:pbcont}  
One has, for all $v,w\in W$,
\begin{multline*}
 \langle v',w\rangle_{L^2(\wunp)',L^2(\wunp)} + \langle w',v\rangle_{L^2(\wunp)',L^2(\wunp)}  \\
 = \langle v(T),w(T)\rangle_{\lp}-\langle v(0),w(0)\rangle_{\lp}.
\end{multline*}
\end{lem}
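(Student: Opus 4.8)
The plan is to prove the identity first on a dense class of smooth time-dependent functions, where it is nothing but the fundamental theorem of calculus, and then to extend it to all of $W\times W$ by continuity, using the embedding $W\hookrightarrow C([0,T];\lp)$ recorded in \eqref{eq:embWinH} (which in particular makes the traces $v(0),v(T),w(0),w(T)\in\lp$ well defined). Concretely, let $\mathcal{D}$ denote the set of finite sums $\sum_i\phi_i(t)\,z_i$ with $\phi_i\in C^1([0,T];\R)$ and $z_i\in\wunp$. Each such function belongs to $W$, its distributional time derivative coincides with the classical one $\sum_i\phi_i'(t)\,z_i\in L^2(0,T;\lp)\subset L^2(0,T;\wunp')$, and I claim $\mathcal{D}$ is dense in $W$ for the norm $\norm{\cdot}{W}$.

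For $v,w\in\mathcal{D}$, the scalar function $t\mapsto\langle v(t),w(t)\rangle_{\lp}$ is continuously differentiable on $[0,T]$ with derivative $\langle v'(t),w(t)\rangle_{\lp}+\langle v(t),w'(t)\rangle_{\lp}$, so integrating over $(0,T)$ gives
\[
\langle v(T),w(T)\rangle_{\lp}-\langle v(0),w(0)\rangle_{\lp}
=\int_0^T\langle v'(t),w(t)\rangle_{\lp}\,\d t+\int_0^T\langle w'(t),v(t)\rangle_{\lp}\,\d t .
\]
Since $v',w'\in L^2(0,T;\lp)$, the Gelfand identification $\langle y,u\rangle_{\wunp',\wunp}=\langle y,u\rangle_{\lp}$ turns each time integral into the corresponding duality pairing, namely $\langle v',w\rangle_{L^2(\wunp)',L^2(\wunp)}$ and $\langle w',v\rangle_{L^2(\wunp)',L^2(\wunp)}$. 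Hence the claimed identity holds on $\mathcal{D}\times\mathcal{D}$.

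Both sides are continuous bilinear forms on $W\times W$. For the left-hand side, Cauchy--Schwarz for the duality pairing gives $|\langle v',w\rangle_{L^2(\wunp)',L^2(\wunp)}|\le\norm{v'}{L^2(\wunp)'}\norm{w}{L^2(\wunp)}\le\norm{v}{W}\norm{w}{W}$, and symmetrically for the second term. For the right-hand side, \eqref{eq:embWinH} yields $|\langle v(T),w(T)\rangle_{\lp}|\le\norm{v(T)}{\lp}\norm{w(T)}{\lp}\le C_T^2\norm{v}{W}\norm{w}{W}$, and likewise at $t=0$. As $\mathcal{D}$ is dense in $W$, the set $\mathcal{D}\times\mathcal{D}$ is dense in $W\times W$, so two continuous bilinear forms agreeing there agree everywhere; this extends the identity to all of $v,w\in W$ and concludes the proof.

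The main obstacle is the density of $\mathcal{D}$ in $W$, which is where the real work lies. I would establish it by the standard extension-and-mollification argument: given $u\in W$, first extend it to $\tilde u\in H^1(-\delta,T+\delta;\wunp')\cap L^2(-\delta,T+\delta;\wunp)$ on a slightly larger interval (even reflection about the endpoints keeps $\tilde u$ continuous into $\lp$ and its weak derivative in $L^2(\wunp')$), then regularise in time by convolution with a mollifier $\rho_n$, obtaining $u_n:=\rho_n*\tilde u$ which is smooth in time with values in $\wunp$ and converges to $u$ in $W$; a final approximation of each $u_n$ by finite separated sums places the approximants in $\mathcal{D}$. Care is needed so that the regularisation controls the $L^2(0,T;\wunp)$ and $H^1(0,T;\wunp')$ components simultaneously and preserves the boundary traces in the limit; the separability of $\wunp$ noted in Section~\ref{sec:pbcont} makes the separation-of-variables step available. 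Alternatively, one may simply invoke the classical density result for Gelfand triples, exactly as the cited reference \cite{Sho97} does.
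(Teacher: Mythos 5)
Your proof is correct, and it matches the paper's approach in the only sense available: the paper gives no argument of its own for this lemma, delegating it to \cite[III Corollary 1.1 p.106]{Sho97}, and your density-plus-continuity argument is precisely the classical proof behind that citation. You correctly isolate the one substantive ingredient (density of $\mathcal{D}$ in $W$), your extension--mollification sketch is the standard way to obtain it, and your use of \eqref{eq:embWinH} involves no circularity within the paper's structure, since the embedding is asserted there before the lemma.
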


Let $\Lambda\in L^\infty(0,T;{\mathcal L}(\lpd,\lpd))$ and let $\wn\in\mathcal L(\lpd,\lpd)$ be a symmetric positive definite operator such that there exists $M\ge 1$ and $\alpha>0$ with
\begin{subequations}\label{eq:prop.wnLambda}
\begin{alignat}{3}
\label{eq:wnLambda.M}
\Vert\wn^{-1}\Lambda(t)\Vert\leq{}& M&&\quad\mbox{ for a.e. $t\in (0,T)$},\\
\label{eq:wnLambda.alpha}
\langle \wn^{-1}\Lambda(t) \xi,\xi \rangle_{\lpd}\geq{}& \alpha \|\xi\|_{\lpd}^2&&\quad\mbox{ for a.e. $t\in (0,T)$ and all $\xi\in \lpd$}.
\end{alignat}
\end{subequations}
We also define $\rho>0$ by
\begin{equation}\label{eq:wnLambda.rho}
 \rho = \mathop{\rm ess\ sup}\limits_{t\in(0,T)}\Vert \wn^{-\nicefrac12}\Lambda(t)\wn^{-\nicefrac12}\Vert_{\lpd}.
\end{equation}

\begin{remark}[Role of $\wn$]\label{rk:why.wn}
The role of $\wn$ is to provide a control on the real number $C$ involved in the error estimate \eqref{eq:errestgd}, through a control of the constants $M,\alpha,\rho$  above; $\wn$ should be chosen to make these constants as small as possible -- and, ideally, to compensate for a possible strong anisotropy of $\Lambda$ (that would create large ratios $M/\alpha$ if $\wn$ is absent). 
In the case where $\Lambda$ is a time-independent symmetric coercive operator, a natural choice is $\wn = \Lambda$; then, we can take $\alpha=M=\rho = 1$ in \eqref{eq:prop.wnLambda}, and the constants in the error estimate \eqref{eq:errestgd} are independent of $\Lambda$ (but the norm of the error estimate depends on it, see Definition \eqref{abs:defweightednorm}).
Note that in the case of the heat equation, $\wn= \Id.$ 
\end{remark}

Let $\Phi~:~\lp\to \lp$ be a linear contraction (which means that  $\Vert\Phi v\Vert_{\lp}\le \Vert v\Vert_{\lp}$ for all $v\in \lp$). 
Our aim is to obtain an error estimate for an approximate solution of the following problem. 
Given $g\in L^2(0,T;\wunp')$ and $\xi_0\in \lp$, find \begin{equation*} 
\hbox{find }u\in W\hbox{ s.t.\ } u' - \adiv(\Lambda \agrad u)=g\hbox{ and }u(0)-\Phi u(T)=\xi_0.
\end{equation*}
 Using the identification between $\wunp$ and $\wunp'$ by the Riesz representation theorem, we decompose $g \in L^2(0,T;\wunp')$ as $g = f +\adiv\bF$ with $f\in L^2(0,T;\lp)$, $\bF\in L^2(0,T;\lpd)$. 
 This decomposition is not unique; indeed $f=0$ is always possible, but in several problems of interest, the source term  $g$ belongs to $L^2(0,T;\lp)$.
 Therefore, the problem to be considered reads
\begin{equation}\label{eq:pbcont}
\hbox{find }u\in W\hbox{ s.t.\ } u' - \adiv(\Lambda \agrad u +\bF)=f\hbox{ and }u(0)-\Phi u(T)=\xi_0.
\end{equation}
We introduce the Riesz isomorphism $R:\wunp'\to \wunp$ (which also defines a Riesz isomorphism samely denoted $R:L^2(0,T;\wunp')\to L^2(0,T;\wunp)$) such that
\begin{equation}\label{eq:defR}
 \forall (\xi,v)\in \wunp'\times \wunp,\ \langle \wn\agrad R\xi, \agrad v\rangle_{\lpd}= \langle \xi, v\rangle_{\wunp',\wunp}.
\end{equation}
The problem \eqref{eq:pbcont} is then equivalent to
\begin{equation}\label{eq:pbcontriesz}
\hbox{find }u\in W\hbox{ s.t.\ } -\adiv(\wn\agrad R u' + \Lambda \agrad u  +\bF)=f\hbox{ and }u(0)-\Phi u(T)=\xi_0,
\end{equation}
which contains that $\wn\agrad Ru' + \Lambda \agrad u + \bF\in L^2(0,T;\hdiv)$.

\begin{thm}[\protect{\cite{RDV1}}]\label{thm:5.3}
 For all  $f\in L^2(0,T;\wunp)'$ and $\xi_0\in \lp$, Problem \eqref{eq:pbcont} has a unique solution.
\end{thm}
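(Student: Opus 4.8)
The plan is to recast \eqref{eq:pbcont} as a single space-time variational equation and to solve it through the Banach--Ne\v{c}as--Babu\v{s}ka (generalised Lax--Milgram) theorem -- the same inf-sup machinery that the paper reuses for the error estimate. I would take $W$ as trial space and $\mathcal{Y}:=L^2(0,T;\wunp)\times\lp$ as test space, and introduce the bilinear form
\[
B(u,(v,w)) = \int_0^T\!\big(\langle u'(t),v(t)\rangle_{\wunp',\wunp}+\langle\Lambda(t)\agrad u(t),\agrad v(t)\rangle_{\lpd}\big)\,\d t+\langle u(0)-\Phi u(T),w\rangle_{\lp}
\]
and the right-hand side $\ell(v,w)=\int_0^T(\langle f,v\rangle_{\lp}-\langle\bF,\agrad v\rangle_{\lpd})\,\d t+\langle\xi_0,w\rangle_{\lp}$. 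Testing the weak form of \eqref{eq:pbcont} against $\wunp$-valued functions (using \eqref{abs:stokes-formula}) and reading the time condition in $\lp$ shows that \eqref{eq:pbcont} is equivalent to: find $u\in W$ with $B(u,(v,w))=\ell(v,w)$ for all $(v,w)\in\mathcal Y$. Boundedness of $B$ and $\ell$ is routine, using the embedding $W\hookrightarrow C([0,T];\lp)$ of \eqref{eq:embWinH}, the bound \eqref{eq:wnLambda.M}, and $\|\Phi\|\le1$.

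The heart of the proof is the inf-sup lower bound. Given $u\in W$, I would test with $v=u+Ru'$ and $w=\mu\,(u(0)-\Phi u(T))$, where $R$ is the Riesz isomorphism of \eqref{eq:defR} and $\mu$ is a constant to be fixed. The choice $v=u+Ru'$ is classical: by \eqref{eq:defR} one has $\int_0^T\langle u',Ru'\rangle=\|u'\|_{L^2(\wunp)'}^2$, the ellipticity furnished by \eqref{eq:prop.wnLambda} yields $\int_0^T\langle\Lambda\agrad u,\agrad u\rangle\gtrsim\|u\|_{L^2(\wunp)}^2$, and the cross term $\int_0^T\langle\Lambda\agrad u,\agrad Ru'\rangle$ is absorbed by Young's inequality via \eqref{eq:wnLambda.M} and \eqref{eq:poincare}; together these control $\|u\|_W^2$. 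The delicate contribution is the time boundary: Lemma~\ref{lem:pbcont} gives $\int_0^T\langle u',u\rangle=\tfrac12\big(\|u(T)\|_{\lp}^2-\|u(0)\|_{\lp}^2\big)$, whose negative part must be dominated. Here the contraction hypothesis $\|\Phi v\|_{\lp}\le\|v\|_{\lp}$ is decisive: since $\|u(T)\|_{\lp}\ge\|\Phi u(T)\|_{\lp}$, one bounds $\|u(0)\|_{\lp}^2-\|u(T)\|_{\lp}^2$ below by $-\|u(0)-\Phi u(T)\|_{\lp}\big(\|u(0)\|_{\lp}+\|u(T)\|_{\lp}\big)$, and \eqref{eq:embWinH} dominates the endpoint values by $\|u\|_W$. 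Choosing $\mu$ large but fixed, the term $\mu\|u(0)-\Phi u(T)\|_{\lp}^2$ then absorbs this contribution (by Young) while leaving a positive multiple of $\|u\|_W^2$; dividing by $\|(v,w)\|_{\mathcal Y}$, itself bounded by a multiple of $\|u\|_W$ through \eqref{eq:embWinH}, produces the uniform inf-sup constant.

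It remains to verify the non-degeneracy (surjectivity) condition of the theorem: for every nonzero $(v,w)\in\mathcal Y$ there is $u\in W$ with $B(u,(v,w))\ne0$. The transpose problem is a backward-in-time parabolic problem of the same class, with $\Phi$ replaced by its adjoint $\Phi^*$, which is again a contraction; hence the inf-sup estimate above, run in reversed time, applies and yields this condition. Granting both, the generalised Lax--Milgram theorem delivers a unique $u\in W$ solving \eqref{eq:pbcont}. I expect the inf-sup step -- precisely the bookkeeping of the endpoint terms under the general time condition $u(0)-\Phi u(T)=\xi_0$, where the negative dissipation $-\tfrac12\|u(0)\|_{\lp}^2$ is balanced against the contraction and the embedding -- to be the main obstacle, the remaining estimates being standard Gelfand-triple computations. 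A transparent alternative, avoiding the adjoint argument, is to first solve the initial-value problem (Lions' theorem), encode the map $\eta\mapsto u(T)$ by a bounded operator $S$ on $\lp$, observe that the strict dissipativity from \eqref{eq:poincare}--\eqref{eq:wnLambda.alpha} forces $\|S\|<1$, and conclude that $\Id-\Phi S$ is invertible by Neumann series, so that the time condition fixes $u(0)$ uniquely.
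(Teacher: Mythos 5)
Your proposal is correct in substance, but note first that the paper itself contains no proof of Theorem \ref{thm:5.3}: the result is imported wholesale from \cite{RDV1}, whose argument (as the title of that reference indicates) rests on Lions' representation theorem --- existence from a bilinear form that is coercive on a (possibly incomplete) test space, uniqueness by a separate energy estimate exploiting $\Vert\Phi\Vert\le 1$. Your Banach--Ne\v{c}as--Babu\v{s}ka route is a genuinely different organisation of the same energy estimates: you prove a quantitative inf-sup bound on the trial space $W$ (which buys uniqueness and stability at once) and then pay for existence with the adjoint-injectivity condition, which is precisely what Lions' theorem allows one to skip. Interestingly, your route is the \emph{continuous} counterpart of what the paper actually does at the discrete level: the test choice $v=u+Ru'$ together with the endpoint bookkeeping based on $\Vert\Phi u(T)\Vert_{\lp}\le\Vert u(T)\Vert_{\lp}$ and \eqref{eq:embWinH} mirrors Lemmas \ref{lem:zigoto} and \ref{lem:peterpaul2} feeding Lemma \ref{lem:suffbnb}, so carrying it out would make Theorem \ref{thm:5.3} self-contained by the same mechanism as the proof of Theorem \ref{thm:errest}. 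Your Neumann-series alternative (solve the initial-value problem by Lions' 1961 theorem, observe that coercivity and the Poincar\'e inequality \eqref{eq:poincare} give $\frac{\d}{\d t}\Vert u(t)\Vert_{\lp}^2 = -2\langle \Lambda\agrad u,\agrad u\rangle_{\lpd}\le -c\Vert u(t)\Vert_{\lp}^2$ for some $c>0$, hence $\Vert S\Vert\le e^{-cT/2}<1$, and invert $\Id-\Phi S$) is also sound and more elementary, at the price of invoking the initial-value theory as a black box and of transferring the coercivity \eqref{eq:wnLambda.alpha} through the weight $\wn$.

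Two steps need tightening in a complete write-up. First, with the \emph{unscaled} choice $v=u+Ru'$, the Young absorption of the cross term $\int_0^T\langle\Lambda\agrad u,\agrad Ru'\rangle_{\lpd}\,\d t$ can destroy the coercive contribution $\alpha\Vert u\Vert_{L^2(\wunp)}^2$ when $M$ is large relative to $\alpha$; you need $v=u+\varepsilon Ru'$ (or $v=\lambda u+Ru'$) with a tuned parameter --- this calibration is exactly what the paper's Lemma \ref{lem:zigoto} encapsulates in its constant $\frac13(\alpha/M)^3$, and it matters here since much of the paper's effort goes into making such constants explicit and $\Phi$-independent. Second, your adjoint step is compressed: from $B(u,(v,w))=0$ for all $u\in W$ one must first bootstrap regularity --- testing against $u\in C^1_c((0,T);\wunp)$ shows that $v$ has a distributional time derivative in $L^2(0,T;\wunp')$, hence $v\in W$ --- before the integration by parts of Lemma \ref{lem:pbcont} identifies the endpoint relations $v(0)=w$ and $v(T)=\Phi^*w$, after which the reversed-time energy estimate (using $\Vert\Phi^*\Vert=\Vert\Phi\Vert\le1$) forces $v=0$ and $w=0$. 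Neither issue is fatal, but both must appear explicitly for the argument to stand on its own.
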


\section{The space-time discretisation}\label{sec:disc}

\subsection{Space approximation using the Gradient Discretisation Method}\label{sec:gdm}

\begin{definition}[Gradient Discretisation]
\label{def:graddisc}
A gradient discretisation is defined by $\mathcal{D}_\disc = (X_{\disc},\api_\disc,\agrad_\disc)$, where:
\begin{enumerate}
\item The set of discrete unknowns 
$X_{\disc}$ is a finite dimensional vector space on $\R$.
\item The ``function'' reconstruction $\api_\disc~:~X_{\disc}\to \lp$ is a linear mapping that reconstructs, from an element of $X_{\disc}$, an element in $\lp$.
\item The ``gradient'' reconstruction $\agrad_\disc~:~X_{\disc}\to \lpd$ is a linear mapping that reconstructs, from an e\-le\-ment of $X_{\disc}$, an element of $\lpd$.
\item The mapping  $\agrad_\disc$ is such that the  mapping $v \mapsto \norm{\agrad_\disc v}{\lpd}$ defines a norm on $X_\disc$.
\end{enumerate}
\end{definition}

We then define the following weighted norm on $X_\disc$
\be
\norm{v}{\disc} :=\norm{\wn^{\nicefrac12}\agrad_\disc v}{\lpd}
\label{abs:defweightednorm}\ee 
and $p_\disc$ as the norm of $\api_\disc$:
\be
p_\disc =  \max_{v\in X_{\disc}\setminus\{0\}}\frac {\norm{\api_\disc v}{\lp }} {\Vert v \Vert_{\disc}}.
\label{abs:defcoercivity}\ee

\subsection{Description of the Euler scheme}\label{sub:4.1}

We now refer to the framework of Section \ref{sec:pbcont}. In particular, ${\Phi} :\lp\to \lp$ is linear and $\Vert\Phi\Vert\le 1$. Moreover, $f\in L^2(0,T;\lp)$, $\bF\in L^2(0,T;\lpd)$ and $\xi_0\in \lp$ are given.

 Let $N\in\mathbb{N}\setminus\{0\}$ and define the time step (taken to be uniform for simplicity of presentation) $k = \frac {T}{N}$. 
 For all $m=1,\ldots,N$, $\Lambda^{(m)}\in {\mathcal L}(\lpd,\lpd)$ denotes the coercive linear operator given by
\[
\Lambda^{(m)}  = \frac 1 {k}\int_{(m-1)k}^{m k} \Lambda(t) {\rm d}t
\]
and  $f^{(m)}\in \lp$, $\bF^{(m)}\in \lpd$ are defined by
\[
 f^{(m)} = \frac 1 {k}\int_{(m-1)k}^{m k} f(t) {\rm d}t\quad\hbox{ and }\quad\bF^{(m)} = \frac 1 {k}\int_{(m-1)k}^{m k} \bF(t) {\rm d}t.
\]

 The implicit Euler scheme consists in seeking $N+1$ elements of $X_\disc$, denoted by $(w^{(m)})_{m=0,\ldots,N}$, such that
 \begin{subequations}\label{eq:scheme}
\begin{equation}\label{eq:schemeiniex}
\langle \api_\disc w^{(0)} - \Phi \api_\disc  w^{(N)} ,  \api_\disc u\rangle_{\lp}  =  \langle \xi_0 ,  \api_\disc u\rangle_{\lp}\mbox{ for all }u\in X_\disc
\end{equation}
and 
\begin{equation}\label{eq:schemeimp}
\begin{aligned}
\langle \api_\disc\frac{w^{(m)}-w^{(m-1)}}{k}, \api_\disc u\rangle_{ \lp } {}&+ \langle \Lambda^{(m)} \agrad_\disc w^{(m)},\agrad_\disc  u\rangle_{\lpd}\\
 ={}&  \langle f^{(m)},\api_\disc u\rangle_{\lp} -\langle \bF^{(m)},\agrad_\disc u \rangle_{\lpd}\\
&\qquad\mbox{ for all }m=1,\ldots,N \mbox{ and }u\in X_\disc.
\end{aligned}
\end{equation}
\end{subequations}

\begin{remark}\label{rem:wzero}
 The discrete value $w^{(0)}$ is only involved in \eqref{eq:schemeiniex}-\eqref{eq:schemeimp} through $\api_\disc w^{(0)}$. As a consequence, we only prove in the following the uniqueness of $\api_\disc w^{(0)}$. If $\api_\disc:X_\disc\to\lp$ is one-to-one, this shows the uniqueness of $w^{(0)}$; if this operator is not one-to-one, then $w^{(0)}$ is actually not unique.
\end{remark}

Note that, if $\Phi\equiv 0$, the scheme is the usual implicit scheme, and the existence and uniqueness of a solution $(\api_\disc w^{(0)},(w^{(m)})_{m=1,\ldots,N})$ to \eqref{eq:schemeimp} is standard. In the general case, a linear system involving $\api_\disc w^{(0)}$ must be solved, and its invertibility is proved by Theorem \ref{thm:errest}.
 
We now define the space $W_\disc$ of all functions $w~:~[0,T]\to X_\disc$ that are piecewise constant in time in the following way: there exist $N+1$ elements of $X_\disc$, denoted by $(w^{(m)})_{m=0,\ldots,N}$, such that
 \begin{equation}\label{eq:schememtheta}
 \begin{aligned}
   w(0)={}&w^{(0)},\mbox{ and }\\ 
   w(t) ={}&  w^{(m)}\hbox{ for all }t\in ((m-1)k, mk],\hbox{ for all }m=1,\ldots,N. 
 \end{aligned}
 \end{equation}
We observe that the space $W_\disc$ is isomorphic to $X_\disc^{N+1}$, through the mapping $w\mapsto (w(mk))_{m=0,\ldots,N}$.
We define the discrete derivative of $w\in W_\disc$ as follows:
 \begin{equation}\label{eq:defqnevolstrex}
 \begin{aligned}
   \partial w(t) ={}& \frac{w^{(m)}-w^{(m-1)}}{k},\\
   &\qquad\text{ for a.e. } t\in ((m-1)k, mk),\hbox{ for all }m=1,\ldots,N.
  \end{aligned}
 \end{equation}

Define the space $V_\disc$ of all functions $v\in L^2(0,T; X_\disc)$ for which there exist $N$ elements of $X_\disc$, denoted by $(v^{(m)})_{m=1,\ldots,N}$, such that
 \begin{equation}\label{eq:defVdisc}
   v(t) =  v^{(m)}\hbox{ for all }t\in ((m-1)k, mk),\hbox{ for all }m=1,\ldots,N. 
 \end{equation}
 
 \begin{remark}[Difference between $W_\disc$ and $V_\disc$]
 $W_\disc$ and $V_\disc$ are both spaces of piecewise constant functions in time.
 However, functions in $W_\disc$ are defined \emph{pointwise and everywhere}, including at all time steps (and are left-continuous on $[0,T]$), whereas functions in $V_\disc$ are only defined \emph{almost everywhere} on $(0,T)$.
  \end{remark}
Scheme \eqref{eq:schemeiniex}--\eqref{eq:schemeimp} can than be written under the form: 
\begin{equation}\label{eq:schemevar}
\hbox{Find }w_\disc\in W_\disc\hbox{ such that }\forall (v,z)\in V_\disc\times X_\disc, \quad b(w_\disc, (v,z)) = L((v,z)),
\end{equation}
with 
\begin{equation}\label{eq:defb}
\begin{aligned}
b(w_\disc, (v,z)) ={}& \langle \api_\disc \partial w_\disc, \api_\disc v\rangle_{ L^2(\lp) } + \langle \Lambda \agrad_\disc w_\disc,\agrad_\disc  v\rangle_{L^2(\lpd)}\\
&+\langle \api_\disc w_\disc(0) - \Phi \api_\disc  w_\disc(T) ,  \api_\disc z\rangle_{\lp}  
\end{aligned}
\end{equation}
and 
\begin{equation*}
L((v,z)) =  \langle f,\api_\disc  v \rangle_{L^2(\lp)} -\langle \bF,\agrad_\disc  v  \rangle_{L^2(\lpd)}+  \langle \xi_0 ,  \api_\disc z\rangle_{\lp}.
\end{equation*}
\begin{remark}[Role of the test functions]
In \eqref{eq:schemevar}, the function $v\in V_\disc$ tests the evolution equation \eqref{eq:schemeimp} while $z\in X_\disc$ tests (through $\api_\disc z$) the initial/final condition \eqref{eq:schemeiniex}.
\end{remark}

\begin{thm}\label{thm:exuniqdis}
Under the setting of this section, there exists one and only one solution $(\api_\disc w^{(0)},(w^{(m)})_{m=1,\ldots,N})$ to \eqref{eq:schemeiniex}-\eqref{eq:schemeimp} or equivalently to \eqref{eq:schemevar}. For this solution, we denote by $w_\disc$ the element of $W_\disc$ corresponding to $(w^{(m)})_{m=0,\ldots,N})$ for a given choice of $w^{(0)}$.
\end{thm}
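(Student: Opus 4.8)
\emph{Proof plan.} The plan is to use that \eqref{eq:schemeiniex}--\eqref{eq:schemeimp} is a finite-dimensional linear problem, so that it suffices to prove uniqueness of $(\api_\disc w^{(0)},(w^{(m)})_{m=1,\dots,N})$; existence then follows. First I would reduce the coupled time problem to a single equation for the initial value. Given $\api_\disc w^{(0)}$, each step \eqref{eq:schemeimp} determines $w^{(m)}$ from $w^{(m-1)}$ through the bilinear form
\[
a_m(w,u)=\tfrac1k\langle\api_\disc w,\api_\disc u\rangle_{\lp}+\langle\Lambda^{(m)}\agrad_\disc w,\agrad_\disc u\rangle_{\lpd}.
\]
Since $\Lambda^{(m)}$ is coercive and $v\mapsto\norm{\agrad_\disc v}{\lpd}$ is a norm on $X_\disc$ (Definition \ref{def:graddisc}), one has $a_m(w,w)\geq\langle\Lambda^{(m)}\agrad_\disc w,\agrad_\disc w\rangle_{\lpd}>0$ for $w\neq0$; hence $a_m$ is coercive on the finite-dimensional space $X_\disc$ and the associated map is invertible. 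Consequently $w^{(1)},\dots,w^{(N)}$, and in particular $\api_\disc w^{(N)}$, depend affinely on $\api_\disc w^{(0)}$ and on the data, so that \eqref{eq:schemeiniex} becomes a single linear equation for $\api_\disc w^{(0)}$ in the finite-dimensional space $\api_\disc(X_\disc)$. By finite-dimensionality it then suffices to show that the homogeneous problem ($\xi_0=0$, $f=0$, $\bF=0$) admits only $\api_\disc w^{(0)}=0$ and $w^{(m)}=0$ for $m\geq1$.

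Next I would run the standard discrete energy estimate on the homogeneous problem. Testing \eqref{eq:schemeimp} with $u=w^{(m)}$, multiplying by $k$ and using the identity $\langle a-b,a\rangle=\tfrac12(\norm{a}{\lp}^2-\norm{b}{\lp}^2+\norm{a-b}{\lp}^2)$ with $a=\api_\disc w^{(m)}$ and $b=\api_\disc w^{(m-1)}$, then summing over $m=1,\dots,N$, yields the telescoped identity
\[
\tfrac12\norm{\api_\disc w^{(N)}}{\lp}^2-\tfrac12\norm{\api_\disc w^{(0)}}{\lp}^2+\tfrac12\sum_{m=1}^N\norm{\api_\disc w^{(m)}-\api_\disc w^{(m-1)}}{\lp}^2+\sum_{m=1}^N k\langle\Lambda^{(m)}\agrad_\disc w^{(m)},\agrad_\disc w^{(m)}\rangle_{\lpd}=0.
\]
As the last two sums are nonnegative, this already gives the non-expansivity $\norm{\api_\disc w^{(N)}}{\lp}\leq\norm{\api_\disc w^{(0)}}{\lp}$ of the discrete evolution.

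Finally I would close the loop using the time boundary condition and the contraction $\Phi$. Testing the homogeneous \eqref{eq:schemeiniex} with $u=w^{(0)}$ gives $\norm{\api_\disc w^{(0)}}{\lp}^2=\langle\Phi\api_\disc w^{(N)},\api_\disc w^{(0)}\rangle_{\lp}$, so Cauchy--Schwarz together with $\Vert\Phi\Vert\leq1$ yields $\norm{\api_\disc w^{(0)}}{\lp}\leq\norm{\api_\disc w^{(N)}}{\lp}$. Combined with the non-expansivity above, this forces $\norm{\api_\disc w^{(0)}}{\lp}=\norm{\api_\disc w^{(N)}}{\lp}$, so the first two terms of the energy identity cancel and both nonnegative sums must vanish. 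Vanishing of the dissipation sum gives $\langle\Lambda^{(m)}\agrad_\disc w^{(m)},\agrad_\disc w^{(m)}\rangle_{\lpd}=0$, hence $\agrad_\disc w^{(m)}=0$ and therefore $w^{(m)}=0$ for every $m\geq1$; vanishing of the jump sum gives $\api_\disc w^{(m)}=\api_\disc w^{(m-1)}$, and since $\api_\disc w^{(N)}=0$ we conclude $\api_\disc w^{(0)}=0$. This is exactly the required triviality of the homogeneous solution, which completes the argument.

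I expect the main obstacle to be the nonlocal-in-time coupling between $w^{(0)}$ and $w^{(N)}$ induced by the generalized boundary condition $u(0)-\Phi u(T)=\xi_0$: unlike the pure initial-value case $\Phi\equiv0$, the scheme cannot simply be marched forward, and one must show that the resulting coupled finite-dimensional system is nonsingular. The energy identity (non-expansivity of the discrete evolution) paired with $\Vert\Phi\Vert\leq1$ is precisely what makes the two a priori inequalities on $\norm{\api_\disc w^{(0)}}{\lp}$ and $\norm{\api_\disc w^{(N)}}{\lp}$ compatible only in the degenerate case; the remaining points --- coercivity of each $\Lambda^{(m)}$ and the reduction to a square system --- are routine.
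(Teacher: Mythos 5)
Your argument is correct, but it takes a genuinely different route from the paper's. The paper proves this theorem in two lines, as a corollary of the error estimate: since \eqref{eq:schemeiniex}--\eqref{eq:schemeimp} is a square linear system, it suffices to show that null data force a null solution, and this follows by applying Theorem \ref{thm:errest} with $f=0$, $\bF=0$, $\xi_0=0$: the continuous solution is then $u=0$ and $\bv=0$, so both $\limcT_\disc(\bv)$ and $\inf_{v\in W_\disc}\disT_\disc(u,v)$ vanish, forcing $\disT_\disc(0,w_\disc)=0$ and hence $w^{(m)}=0$ for $m\ge1$ and $\api_\disc w^{(0)}=0$. You instead give a self-contained discrete energy/contraction argument: per-step solvability, affine reduction of \eqref{eq:schemeiniex} to a single equation for $\api_\disc w^{(0)}$ in $\api_\disc(X_\disc)$, the telescoped energy identity, non-expansivity $\Vert\api_\disc w^{(N)}\Vert_\lp\le\Vert\api_\disc w^{(0)}\Vert_\lp$, and closure of the loop via $\Vert\Phi\Vert\le1$. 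Your telescoping identity is in substance the case $m=0$, $m'=N$ of \eqref{eq:ineqestimthetaim} in Lemma \ref{lem:hypsufbnb} (there phrased through the discrete Riesz operator $R_\disc$), so the core computation is also present in the paper, but embedded in the inf-sup machinery. What your route buys: it is elementary, removes the slightly awkward forward reference from Section \ref{sec:disc} to Section \ref{sec:error_estimate}, does not invoke the continuous well-posedness result (Theorem \ref{thm:5.3}, needed in the paper's proof to know that $u=0$ solves the null-data problem), and mirrors the uniqueness proof at the continuous level. What the paper's route buys: brevity and no duplication, since the error estimate must be proved anyway and already contains the required energy inequality, uniformly in $\Vert\Phi\Vert\le1$.

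One caveat worth making explicit: both your per-step invertibility and the conclusion $\agrad_\disc w^{(m)}=0$ from the vanishing dissipation rest on strict positivity of $\langle\Lambda^{(m)}\xi,\xi\rangle_{\lpd}$. The literal hypothesis \eqref{eq:wnLambda.alpha} asserts coercivity of $\wn^{-1}\Lambda(t)$, which does not by itself imply positivity of $\Lambda(t)$ when $\Lambda$ has a large antisymmetric part and $\wn$ is strongly anisotropic (a $2\times 2$ example with $\wn=\mathrm{diag}(1,\eps)$ and $\Lambda=\wn(\mathrm{Id}+S)$, $S$ antisymmetric and large, shows this). The paper itself reads the hypothesis in the stronger sense $\langle\Lambda(t)\xi,\xi\rangle_{\lpd}\ge\alpha\langle\wn\xi,\xi\rangle_{\lpd}$ --- it explicitly calls $\Lambda^{(m)}$ coercive and applies Lemma \ref{lem:zigoto} with $A=\wn^{-1}\Lambda$ in the $\wn$-weighted inner product, which requires exactly this --- so your usage is consistent with the paper's intended assumptions, but stating that reading would make your proof airtight.
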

\begin{proof}
 Since $(\api_\disc w^{(0)},(w^{(m)})_{m=1,\ldots,N})$ is solution to a square linear system, the error estimate Theorem \ref{thm:errest} proves that, for a null right-hand-side, the solution is null.  Hence the system is invertible.
\end{proof}

\section{Error estimate}
\label{sec:error_estimate}

Define the discrete Riesz operator $R_\disc:X_\disc\to X_\disc$ by: for all $u\in X_\disc$, $R_\disc u$ satisfies
\begin{equation}\label{eq:defRdisc}
\langle \wn\agrad_\disc R_\disc u, \agrad_\disc v\rangle_{\lpd} = \langle \api_\disc u,  \api_\disc v\rangle_{\lp}\quad\mbox{ for all }v\in X_\disc.
\end{equation}
We note that with this definition, the scheme \eqref{eq:schemevar} can be recast as: for all $(v,z)\in V_\disc\times X_\disc$,
\begin{equation}\label{eq:schemevar.recast}
\begin{aligned}
\langle \wn\agrad_\disc R_\disc\partial  w_\disc+\Lambda \agrad_\disc w_\disc+\bF, {}&\agrad_\disc v\rangle_{ L^2(\lpd) } 
+\langle \api_\disc w_\disc(0) - \Phi \api_\disc  w_\disc(T) ,  \api_\disc z\rangle_{\lp}\\
={}&
\langle f,\api_\disc  v \rangle_{L^2(\lp)} +  \langle \xi_0 ,  \api_\disc z\rangle_{\lp}.
\end{aligned}
\end{equation}
Set, for all $u\in W$ and $v\in W_\disc$,
\begin{equation}\label{eq:def.Suv}
\begin{aligned}
 \disT_\disc(u,v) ={}& \Vert \wn^{\nicefrac12}(\agrad Ru' - \agrad_\disc R_\disc\partial v)\Vert_{L^2(\lpd)}\\
 &+\Vert \wn^{-\nicefrac12}\Lambda(\agrad u - \agrad_\disc  v)\Vert_{L^2(\lpd)}+ \max_{t\in[0,T]}\Vert u(t) - \api_\disc  v(t)\Vert_{\lp}.
\end{aligned}
\end{equation}
We also define 
 $\limcT_{\disc}:L^2(0,T;\hdiv) \to [0,+\infty)$  by: for all $\bv\in L^2(0,T;\hdiv)$,
\be
\limcT_{\disc}(\bv) = 
\sup_{v\in V_{\disc}\setminus\{0\}}\frac{\dsp
\left|\langle \bv,\agrad_\disc v\rangle_{L^2(\lpd)} + \langle \adiv\bv,\api_\disc v\rangle_{L^2(\lp)}
\right|}{\Vert  \wn^{\nicefrac12}\agrad_\disc v \Vert_{L^2(\lpd)}}.
\label{abs:defwdiscV}\ee

\begin{thm}\label{thm:errest}
 Let $u$ be the solution to \eqref{eq:pbcont}, let 
\begin{equation}\label{eq:defbv}
 \bv := \wn\agrad Ru' + \Lambda \agrad u  +\bF\in L^2(0,T;\hdiv)
\end{equation}
 and let $w_\disc$ be a solution to \eqref{eq:scheme}. 
 Then there exists $C_\disc\ge 0$, depending only on    $p_\disc$ (defined by \eqref{abs:defcoercivity}) in a non decreasing and continuous way, and on $(\alpha, M, T)$ (see \eqref{eq:prop.wnLambda}), such that:
\begin{multline}\label{eq:errestgd}
\frac 1 2\Big[ \limcT_\disc(\bv)+ \inf_{v\in W_\disc} \disT_\disc(u,v)\Big] \le \\
\disT_\disc(u,w_\disc) 
\le  C_\disc \max(1,\rho)\Big[ \limcT_\disc(\bv) + \inf_{v\in W_\disc} \disT_\disc(u,v)\Big].
\end{multline}
\end{thm}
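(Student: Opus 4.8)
The structure mirrors the classical Strang/Céa argument, but carried out at the level of the abstract bilinear form $b$ from \eqref{eq:defb}, recast via the discrete Riesz operator as in \eqref{eq:schemevar.recast}. The plan is to exhibit two ingredients: a \emph{consistency/conformity bound} relating the residual of the exact solution against the scheme to the conformity measure $\limcT_\disc(\bv)$, and a \emph{discrete inf-sup (stability) estimate} for $b$ on $W_\disc\times(V_\disc\times X_\disc)$ with a constant controlled by $p_\disc,\alpha,M,T$. The lower bound in \eqref{eq:errestgd} is the easy direction: for the conformity term one simply plugs $v=w_\disc$ (or the relevant test functions) into the definition \eqref{abs:defwdiscV} and uses that $w_\disc$ solves \eqref{eq:schemevar.recast}, so the numerator reproduces part of $\disT_\disc(u,w_\disc)$ up to the reconstructed quantities; for the interpolation term, $w_\disc\in W_\disc$ is itself an admissible competitor in $\inf_{v\in W_\disc}$, and a triangle-inequality comparison against any $v$ gives the factor $\tfrac12$.

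For the upper bound, the key decomposition is $\disT_\disc(u,w_\disc)\le \disT_\disc(u,v)+\disT_\disc(v,w_\disc)$ for an arbitrary $v\in W_\disc$ (where the second quantity is the purely discrete distance between two elements of $W_\disc$, using the same three norms). The first term is bounded by taking the infimum. The heart of the matter is to control the discrete error $e:=w_\disc-v\in W_\disc$ by the data of the problem. First I would write down the error equation: testing \eqref{eq:schemevar.recast} and subtracting the continuous equation \eqref{eq:pbcontriesz} tested against $\api_\disc,\agrad_\disc$-reconstructions, the residual produced by inserting the exact solution $u$ splits exactly into (i) a conformity defect, namely $\langle \bv,\agrad_\disc(\cdot)\rangle + \langle\adiv\bv,\api_\disc(\cdot)\rangle$, which by \eqref{abs:defwdiscV} is bounded by $\limcT_\disc(\bv)\,\Vert\wn^{1/2}\agrad_\disc(\cdot)\Vert$, and (ii) interpolation defects measured by $\disT_\disc(u,v)$. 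Thus $b(e,(\cdot,\cdot))$ is bounded by $\limcT_\disc(\bv)+\disT_\disc(u,v)$ against the appropriate test norm.

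The crucial and hardest step is then the discrete inf-sup inequality: one must show that $\Vert e\Vert$ in the three-part norm defining $\disT_\disc$ is controlled by $\sup_{(v,z)} b(e,(v,z))/\Vert(v,z)\Vert$, with the stated constant $C_\disc\max(1,\rho)$. This is where the Hilbert-space inf-sup machinery announced in the abstract enters: I would follow the integration-by-parts identity of Lemma \ref{lem:pbcont} (or its discrete analogue, summing \eqref{eq:defqnevolstrex} by parts in time) to handle the $\langle\api_\disc\partial e,\api_\disc v\rangle$ term, choosing test functions adapted to $e$ — typically a combination of $e$ itself (to extract the coercive $\Lambda$-energy via \eqref{eq:wnLambda.alpha}), a time-integrated or Riesz-transformed version of $e$ (to recover the $\Vert\wn^{1/2}\agrad_\disc R_\disc\partial e\Vert$ part), and the boundary test $z$ tied to $e(0)-\Phi e(T)$ (to control the pointwise-in-time $\lp$-norm, using the contraction property $\Vert\Phi\Vert\le1$). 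The interplay between the weight $\wn$, the bounds $M,\alpha$ from \eqref{eq:prop.wnLambda}, the Poincaré/boundedness constant $p_\disc$, and the contraction $\Phi$ is exactly what forces the constant to take the form $C_\disc\max(1,\rho)$ with $C_\disc$ nondecreasing in $p_\disc$. I expect the periodic/general time-boundary case (nonzero $\Phi$) to be the main obstacle: the term $e(0)-\Phi e(T)$ prevents a naive telescoping, so the inf-sup test function must be chosen globally in time rather than by a Gronwall-type marching argument, and establishing coercivity of the resulting quadratic form uniformly in $\Phi$ (given only $\Vert\Phi\Vert\le1$) is the delicate point.
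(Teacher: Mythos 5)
Your overall architecture coincides with the paper's: the same error equation (combining \eqref{eq:pbcontriesz}, the recast scheme \eqref{eq:schemevar.recast}, and $u(0)-\Phi u(T)=\xi_0$) bounds the residual of $e=\widetilde v-w_\disc$ by $\limcT_\disc(\bv)+2\disT_\disc(u,\widetilde v)$ against the test norms, and the theorem then follows from a discrete inf-sup plus a triangle inequality; your lower-bound argument is also the paper's, modulo one imprecision: since $\limcT_\disc(\bv)$ is a supremum over \emph{all} $v\in V_\disc$, one rewrites its numerator via the scheme (with $z=0$) for every test $v$ and applies Cauchy--Schwarz to get $\limcT_\disc(\bv)\le\disT_\disc(u,w_\disc)$ — one does not ``plug $v=w_\disc$''.

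The genuine gap sits exactly at the point you label delicate and then leave open: coercivity uniform in $\Vert\Phi\Vert\le 1$. Your test-function plan (test with $e$ for the $\Lambda$-energy, with a Riesz-transformed $R_\disc\partial e$ for the time-derivative part, discrete summation by parts in time) yields, as in \eqref{eq:hypsufbnb.1}, the endpoint balance $\frac12\Vert\api_\disc e(T)\Vert_\lp^2-\frac12\Vert\api_\disc e(0)\Vert_\lp^2$, i.e.\ $\mu=\nu=\frac12$ in the notation of \eqref{eq:condlim}. But when $\Phi$ is an isometry (the periodic case $\Phi=\Id$), the quadratic form $\frac12\Vert w\Vert_\lp^2-\frac12\Vert v\Vert_\lp^2+\theta\Vert v-\Phi w\Vert_\lp^2$ vanishes on $v=\Phi w$ for every $\theta$, so \emph{no} choice of the boundary test $z$ can close the argument: the balance must be strictly tilted to $\mu-\nu\ge\delta>0$. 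The idea missing from your proposal is that this tilt is bought from the volumetric terms: the operator inequality of Lemma \ref{lem:zigoto} — which, incidentally, also handles the antisymmetric part of $\wn^{-1}\Lambda$, a point your appeal to \eqref{eq:wnLambda.alpha} alone glosses over — leaves a surplus $\frac13\bigl(\frac{\alpha}{M}\bigr)^3$ times the squared $\wn$-weighted norms of $\agrad_\disc R_\disc\partial e$ and $\agrad_\disc e$; half of this surplus, combined with the discrete-Poincar\'e-in-time bound \eqref{eq:hypsufbnb.2}, raises $\mu$ to $\frac12+\bigl(1+\frac{\widehat C^2}{T}\bigr)^{-1}\frac{\alpha^2}{12M^3}$ while keeping $\nu=\frac12$, after which Lemma \ref{lem:peterpaul2} gives coercivity with $\gamma\ge 2\alpha\delta$ uniformly in $\Vert\Phi\Vert\le1$. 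This trade is precisely where the dependence of $C_\disc$ on $p_\disc$, $T$, $\alpha$, $M$ originates; without it the inf-sup constant degenerates as $\Vert\Phi\Vert\to1$ and the theorem's stated generality (periodic time conditions included) is lost. A minor further correction: the $\max_{t\in[0,T]}$ component of $\disT_\disc$ is recovered afterwards from the energy bound \eqref{eq:majunif}, not from the boundary test $z$ as you suggest.
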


\begin{remark}[Optimal error estimate]
If $C_\disc$ is bounded independently of $h$, which is the case  if for several discretisation methods for which $p_\disc$ can be shown to be bounded thanks to a regularity assumption on the mesh \cite[Part III]{gdm}, the second inequality in \eqref{eq:errestgd} gives an error estimate for the scheme, while the first inequality shows its optimality. 
This is the result announced in the title and introduction of this work.
\end{remark}

\begin{remark}
By Hypothesis \eqref{eq:wnLambda.alpha} and since $\wn$ is symmetric positive definite, we have
\[
C_\star\left(\norm{\wn \xi}{\lpd}+\norm{\wn^{-\nicefrac12}\Lambda\xi}{\lpd}\right)
\le \norm{\xi}{\lpd}
\le
C^\star\left(\norm{\wn \xi}{\lpd}+\norm{\wn^{-\nicefrac12}\Lambda\xi}{\lpd}\right)
\]
where $C_\star$ and $C^\star$ depend on $\wn$, $\alpha$, $M$. Hence, the estimate \eqref{eq:errestgd} also translates into an estimate
on the term \eqref{eq:def.Suv} without the factors $\wn$ and $\wn^{-\nicefrac12}\Lambda$. The latter estimate, however, has multiplicative constants that may depend more severely on the anisotropy of $\Lambda$, see Remark \ref{rk:why.wn}.
\end{remark}

The proof of Theorem \ref{thm:errest} is given after stating and proving a series of technical lemmas involving operators on Hilbert spaces.

\begin{lem}\label{lem:hypsufbnb}
For $w\in W_\disc$, the following inequalities hold
\begin{multline}\label{eq:majunif}
 \max_{t\in[0,T]} \Vert\api_\disc w(t)\Vert_\lp \\
 \le  \Vert \wn^{\nicefrac12}\agrad_\disc R_\disc\partial w\Vert_{L^2(\lpd)}+\Vert \wn^{\nicefrac12}\agrad_\disc  w\Vert_{L^2(\lpd)}+ \Vert\api_\disc w(0)\Vert_\lp,
\end{multline} 
\begin{equation}\label{eq:hypsufbnb.1}
\langle \wn\agrad_\disc R_\disc\partial w,  \agrad_\disc w\rangle_{L^2(\lpd) }
\ge \frac12 \Vert \api_\disc  w(T)\Vert_\lp^2 - \frac12\Vert\api_\disc  w(0)\Vert_\lp^2,
\end{equation}
and, recalling that $p_\disc$ is defined by \eqref{abs:defcoercivity},
\begin{equation}\label{eq:hypsufbnb.2}
\Vert  \wn^{\nicefrac12}\agrad_\disc R_\disc\partial w\Vert_{L^2(\lpd)}^2 + \left(1+\frac{p_\disc^2}{T}\right) \Vert \wn^{\nicefrac12}\agrad_\disc w\Vert_{L^2(\lpd)}^2
\ge \Vert \api_\disc  w(T)\Vert_\lp^2.
\end{equation}
\end{lem}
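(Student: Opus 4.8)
The plan is to reduce everything to the sequence $a_m := \api_\disc w^{(m)}\in\lp$, $m=0,\ldots,N$, and to exploit the defining identity \eqref{eq:defRdisc} of $R_\disc$: read with $u=w^{(m)}-w^{(m-1)}$ and $v=w^{(m)}$, it gives $\langle \wn\agrad_\disc R_\disc(w^{(m)}-w^{(m-1)}),\agrad_\disc w^{(m)}\rangle_{\lpd}=\langle a_m-a_{m-1},a_m\rangle_{\lp}$. Since $w$ and $\partial w$ are piecewise constant in time ($\partial w=(w^{(m)}-w^{(m-1)})/k$ and $w=w^{(m)}$ on $((m-1)k,mk)$), integrating over each subinterval of length $k$ converts the time integral into a sum, yielding the basic identity
\[
\langle \wn\agrad_\disc R_\disc\partial w,\agrad_\disc w\rangle_{L^2(\lpd)}=\sum_{m=1}^N\langle a_m-a_{m-1},a_m\rangle_{\lp}.
\]
All three estimates follow by manipulating this sum. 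Writing $d_m:=\norm{\wn^{\nicefrac12}\agrad_\disc R_\disc(w^{(m)}-w^{(m-1)})}{\lpd}$, $g_m:=\norm{\wn^{\nicefrac12}\agrad_\disc w^{(m)}}{\lpd}$, and setting $A:=\norm{\wn^{\nicefrac12}\agrad_\disc R_\disc\partial w}{L^2(\lpd)}$, $B:=\norm{\wn^{\nicefrac12}\agrad_\disc w}{L^2(\lpd)}$, the same piecewise-constancy gives $A^2=\sum_m d_m^2/k$ and $B^2=\sum_m k\,g_m^2$, while the $\wn$-weighted Cauchy--Schwarz inequality (valid since $\wn$ is symmetric positive definite) yields $|\langle a_m-a_{m-1},a_m\rangle_{\lp}|\le d_m g_m$.

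For \eqref{eq:hypsufbnb.1} I would substitute the polarization identity $\langle a_m-a_{m-1},a_m\rangle_{\lp}=\tfrac12(\norm{a_m}{\lp}^2-\norm{a_{m-1}}{\lp}^2+\norm{a_m-a_{m-1}}{\lp}^2)$ into the basic identity, discard the nonnegative squared-difference term, and telescope; recalling $a_N=\api_\disc w(T)$ and $a_0=\api_\disc w(0)$ then gives the claim directly.

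For \eqref{eq:majunif}, note that $\max_{t\in[0,T]}\norm{\api_\disc w(t)}{\lp}=\max_{0\le m\le N}\norm{a_m}{\lp}$. Telescoping the same polarization identity from $0$ to any $m$ and dropping the (now favourable) negative terms gives $\norm{a_m}{\lp}^2\le\norm{a_0}{\lp}^2+2\sum_{j=1}^m\langle a_j-a_{j-1},a_j\rangle_{\lp}\le\norm{a_0}{\lp}^2+2\sum_{j=1}^N d_j g_j$. A discrete Cauchy--Schwarz bounds $\sum_j d_j g_j=\sum_j (d_j/\sqrt k)(\sqrt k\,g_j)\le AB$, whence $\norm{a_m}{\lp}^2\le\norm{a_0}{\lp}^2+(A+B)^2$; concluding with $\sqrt{x^2+y^2}\le x+y$ for $x,y\ge0$ gives exactly \eqref{eq:majunif}.

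The main obstacle is \eqref{eq:hypsufbnb.2}, where the bound must avoid $\norm{a_0}{\lp}$ altogether and produce the sharp constant $1+p_\disc^2/T$. The key idea is \emph{not} to telescope down to $a_0$, but to stop at a well-chosen index: for any $\ell\in\{1,\ldots,N\}$, telescoping from $\ell$ to $N$ as above gives $\norm{a_N}{\lp}^2\le\norm{a_\ell}{\lp}^2+2\sum_{j=1}^N d_jg_j\le\norm{a_\ell}{\lp}^2+A^2+B^2$. The definition \eqref{abs:defcoercivity} of $p_\disc$ controls the remaining boundary term by the energy, $\norm{a_\ell}{\lp}=\norm{\api_\disc w^{(\ell)}}{\lp}\le p_\disc\norm{w^{(\ell)}}{\disc}=p_\disc g_\ell$, so that $\norm{a_N}{\lp}^2\le p_\disc^2 g_\ell^2+A^2+B^2$. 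Choosing $\ell$ to minimise $g_\ell$ and using the averaging bound $\min_\ell g_\ell^2\le\tfrac1N\sum_\ell g_\ell^2=\tfrac1{Nk}B^2=\tfrac1T B^2$ (as $Nk=T$) delivers precisely $\norm{a_N}{\lp}^2\le A^2+(1+p_\disc^2/T)B^2$. The point to watch is that the $p_\disc$-bound controls $\norm{a_\ell}{\lp}$ only for $\ell\ge1$, since $w^{(0)}$ is not controlled (indeed it need not even be unique, cf.\ Remark \ref{rem:wzero}); this is exactly why one telescopes down to some $\ell\ge1$ rather than to $0$.
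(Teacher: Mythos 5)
Your proof is correct and takes essentially the same route as the paper's: the same discrete energy identity derived from \eqref{eq:defRdisc} together with $(a-b)a=\frac12 a^2+\frac12(a-b)^2-\frac12 b^2$, telescoped and combined with Cauchy--Schwarz and Young for \eqref{eq:majunif} and \eqref{eq:hypsufbnb.1}. For \eqref{eq:hypsufbnb.2} your device of stopping the telescope at a minimising index $\ell\ge 1$ and averaging $g_\ell^2$ is interchangeable with the paper's version of the same idea, namely summing the estimate over all starting indices $m=1,\ldots,N$ and using $\Vert\api_\disc w\Vert_{L^2(\lp)}\le p_\disc\Vert \wn^{\nicefrac12}\agrad_\disc w\Vert_{L^2(\lpd)}$, so both yield the constant $1+p_\disc^2/T$.
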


\begin{proof}
Let $w\in W_\disc$. Using the relation $(a-b)a=\frac12 a^2+\frac12 (a-b)^2-\frac12 b^2$, the definition \eqref{eq:defRdisc} of $R_\disc$ yields, for $0\le m \le m'\le N$,
\begin{align}
 \int_{mk}^{m'k} \langle {}&\wn\agrad_\disc R_\disc\partial w(t),  \agrad_\disc w(t)\rangle_{\lpd }{\rm d}t =
 \int_{mk}^{m'k} \langle \api_\disc\partial w(t),  \api_\disc w(t)\rangle_{\lp }{\rm d}t \nonumber\\
 ={}& \sum_{p=m}^{m'-1} k \langle \api_\disc \frac {w^{(p+1)} - w^{(p)}} k,\api_\disc  w^{(p+1)}\rangle_\lp \nonumber\\
 ={}& \frac 1 2 \Vert  \api_\disc w^{(m')}\Vert_\lp^2 +\frac 1 2\sum_{p=m}^{m'-1} \Vert  \api_\disc (w^{(p+1)} - w^{(p)})\Vert_\lp^2 -\frac 1 2 \Vert  \api_\disc w^{(m)}\Vert_\lp^2.
\label{eq:ineqestimthetaim}
\end{align}
Using the Cauchy--Schwarz inequality on the left-hand side provides
\begin{align}
 \frac 1 2 {}&\Vert \api_\disc w^{(m')}\Vert_\lp^2  \le{}   \Vert  \wn^{\nicefrac12}\agrad_\disc R_\disc\partial w\Vert_{L^2(\lpd)}\Vert \wn^{\nicefrac12}\agrad_\disc w\Vert_{L^2(\lpd)}+ \frac 1 2 \Vert \api_\disc w^{(m)}\Vert_\lp^2\nonumber\\
 &\le\frac12\Vert  \wn^{\nicefrac12}\agrad_\disc R_\disc\partial w\Vert_{L^2(\lpd)}^2+\frac12\Vert \wn^{\nicefrac12}\agrad_\disc w\Vert_{L^2(\lpd)}^2+ \frac 1 2 \Vert \api_\disc w^{(m)}\Vert_\lp^2,
 \label{eq:est.m.mp}
 \end{align}
where the second line follows from the Young inequality. Setting $m=0$ allows us to take any $m'=0,\ldots,N$. Taking the square root of the above inequality and using $(a^2+b^2+c^2)^{\nicefrac12}\le a+b+c$ then concludes the proof of \eqref{eq:majunif}.

The inequality \eqref{eq:hypsufbnb.1} is obtained letting $m=0$ and $m'=N$ in \eqref{eq:ineqestimthetaim}. To prove \eqref{eq:hypsufbnb.2}, we come back to \eqref{eq:est.m.mp} and set $m'=N$ to get, after multiplication by $2k$, for all $m=0,\ldots,N$,
\[
 k \Vert  \api_\disc w(T)\Vert_\lp^2  \le   k   \Vert  \wn^{\nicefrac12}\agrad_\disc R_\disc\partial w\Vert_{L^2(\lpd)}^2+k\Vert \wn^{\nicefrac12}\agrad_\disc w\Vert_{L^2(\lpd)}^2+ k \Vert \api_\disc w^{(m)}\Vert_\lp^2.
\]
Summing over $m=1,\ldots,N$ yields
\begin{align*}
  T \Vert \api_\disc w(T)\Vert_\lp^2  \le{}& T  \Vert  \wn^{\nicefrac12}\agrad_\disc R_\disc\partial w\Vert_{L^2(\lpd)}^2+T\Vert \wn^{\nicefrac12}\agrad_\disc w\Vert_{L^2(\lpd)}^2 + \Vert \api_\disc w\Vert_{L^2(\lp)}^2\\
  \le{}&T  \Vert  \wn^{\nicefrac12}\agrad_\disc R_\disc\partial w\Vert_{L^2(\lpd)}^2+(T+p_\disc^2)\Vert \wn^{\nicefrac12}\agrad_\disc w\Vert_{L^2(\lpd)}^2,
\end{align*}
which proves \eqref{eq:hypsufbnb.2}.
\end{proof}

\begin{lem}\label{lem:zigoto}
Let ${V}$ be a Hilbert space and let $A~:~{V}\to {V}$ be an $M$-continuous and $\alpha$-coercive operator (with $M\ge 1$ and $\alpha>0$), which means that 
\begin{equation}
\label{eq:Malpha.operator}
\Vert A v\Vert_{{V}} \le M \|v\|_{V}\quad\mbox{and}\quad
\langle A v, v\rangle_{{V}} \ge \alpha \|v\|_{V}^2\quad\forall v\in {V}.
\end{equation}  
Then, for all $v,w\in V$,
 \begin{equation}\label{eq:3.8n}
 \| w + A v\|_{{V}}^2\geq 2\alpha\langle w,v\rangle_{{V}}+ \frac 1 3 \left(\frac \alpha M\right)^3 (\|w \|_{{V}}^2 +\|v\|_{V}^2).
 \end{equation} 
\end{lem}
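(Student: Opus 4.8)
The plan is to expand the square and reduce \eqref{eq:3.8n} to a pointwise quadratic estimate in $w$, for $v$ fixed. Writing $\|w+Av\|_V^2 = \|w\|_V^2 + 2\langle w, Av\rangle_V + \|Av\|_V^2$ and moving the term $2\alpha\langle w,v\rangle_V$ to the left, the claim is equivalent to
\[
\|w\|_V^2 + 2\langle w, Av-\alpha v\rangle_V + \|Av\|_V^2 \ge c\big(\|w\|_V^2 + \|v\|_V^2\big),\qquad c := \tfrac13(\alpha/M)^3 .
\]
I would first record that coercivity and continuity force $\alpha\|v\|_V^2 \le \langle Av,v\rangle_V \le \|Av\|_V\|v\|_V \le M\|v\|_V^2$, hence $\alpha \le M$ and $c \le \tfrac13 < 1$; this bound $c<1$ is what makes the reduction below work.

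Next I would minimise the left-hand side over $w$. Since $c<1$, the quadratic $g(w) := (1-c)\|w\|_V^2 + 2\langle w, Av-\alpha v\rangle_V + \|Av\|_V^2$ is strictly convex, with minimiser $w^\star = -(Av-\alpha v)/(1-c)$ and
\[
\min_{w\in V} g(w) = \|Av\|_V^2 - \frac{\|Av-\alpha v\|_V^2}{1-c}.
\]
It therefore suffices to prove the $v$-only inequality $\|Av\|_V^2 - (1-c)^{-1}\|Av-\alpha v\|_V^2 \ge c\|v\|_V^2$. Expanding $\|Av-\alpha v\|_V^2 = \|Av\|_V^2 - 2\alpha\langle Av,v\rangle_V + \alpha^2\|v\|_V^2$ and collecting terms, the left-hand side minus $c\|v\|_V^2$ equals
\[
\frac{-c}{1-c}\,\|Av\|_V^2 + \frac{2\alpha}{1-c}\,\langle Av,v\rangle_V - \Big(\frac{\alpha^2}{1-c}+c\Big)\|v\|_V^2 .
\]
Here the coefficient of $\|Av\|_V^2$ is negative, so I apply the continuity bound $\|Av\|_V^2 \le M^2\|v\|_V^2$ to it, while the coefficient of $\langle Av,v\rangle_V$ is positive, so I apply the coercivity bound $\langle Av,v\rangle_V \ge \alpha\|v\|_V^2$. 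After substitution the whole expression reduces to $\big[\tfrac{\alpha^2-cM^2}{1-c}-c\big]\|v\|_V^2$, so everything boils down to the scalar inequality $\alpha^2 \ge c(M^2+1-c)$.

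The final step carries the whole tuning of the constant. Since $c>0$ one has $c(M^2+1-c)\le c(M^2+1)$, and substituting $c=\tfrac13(\alpha/M)^3$ gives $c(M^2+1) = \tfrac13\,\alpha^2\cdot\tfrac{\alpha}{M}\cdot\tfrac{M^2+1}{M^2}$. Using $\alpha\le M$ (so $\alpha/M\le 1$) and $M\ge 1$ (so $(M^2+1)/M^2\le 2$) yields $c(M^2+1)\le \tfrac23\alpha^2 < \alpha^2$, which closes the argument. I expect the main obstacle to be exactly this bookkeeping: one must track the signs of the coefficients when inserting the continuity and coercivity estimates — using the \emph{upper} bound on $\|Av\|_V^2$ precisely where its coefficient is negative — and the exponent $3$ and factor $\tfrac13$ are chosen so that $\tfrac13\cdot\tfrac{\alpha}{M}\cdot\tfrac{M^2+1}{M^2}<1$; a looser choice of $c$ would break the concluding scalar comparison.
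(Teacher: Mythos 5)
Your proof is correct, and it takes a genuinely different route from the paper's. The paper splits $A$ into its symmetric and antisymmetric parts, invokes the positive square root $A_{\rm s}^{\nicefrac12}$ of the symmetric part (a spectral-theoretic ingredient), applies coercivity to $A_{\rm s}^{-1/2}(w+Av)$, and then tunes two Young-inequality parameters ($\gamma$, then $s=\alpha^2/(2M^2)$) to land on $\frac13(\alpha/M)^3$. You instead observe that \eqref{eq:3.8n} is, for each fixed $v$, a quadratic inequality in $w$ with leading coefficient $1-c>0$ (your preliminary remark $\alpha\le M$, hence $c\le\frac13$, is exactly what guarantees this), minimise exactly over $w$ by completing the square — a step that loses nothing — and so reduce the claim to the $v$-only estimate $\|Av\|_V^2-(1-c)^{-1}\|Av-\alpha v\|_V^2\ge c\|v\|_V^2$, and from there, inserting $\|Av\|_V\le M\|v\|_V$ where the coefficient is negative and $\langle Av,v\rangle_V\ge\alpha\|v\|_V^2$ where it is positive (the sign bookkeeping you flag is handled correctly), to the scalar inequality $\alpha^2\ge c(M^2+1-c)$; I verified each algebraic step, and the final comparison $c(M^2+1)\le\frac23\alpha^2$ via $\alpha\le M$ and $M\ge1$ is right. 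Your argument is shorter and entirely elementary — no symmetric/antisymmetric decomposition, no operator square roots — and it is even quantitatively stronger if pushed: since $c(M^2+1-c)=c(M^2+1)-c^2$, the scalar condition already holds for the larger value $c=\alpha^2/(M^2+1)$, so the very same computation proves the lemma with a constant of order $(\alpha/M)^2$ rather than $(\alpha/M)^3$. What the paper's route buys is structural rather than numerical: its intermediate estimate \eqref{eq:est.w.Av.1} keeps the terms $\|A_{\rm s}^{-1/2}w\|_V^2$ and $\|A_{\rm s}^{\nicefrac12}v\|_V^2$ separate, so for symmetric $A$ (where $A_{\rm a}=0$) it immediately yields the much sharper bound $2\alpha\langle w,v\rangle_V+\frac{\alpha}{M}\|w\|_V^2+\alpha^2\|v\|_V^2$ with no extra work, whereas your substitution of the two crude bounds commits to a generic constant from the start. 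For the lemma as stated, your proof fully suffices.
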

\begin{proof}
 Consider the symmetric $A_{\sym}:=\frac{A+A^*}{2}$ and anti-symmetric $A_{\asym}:=\frac{A-A^*}{2}$ parts of $A$. We have, for all $v\in {V}$,
  $\langle A_{\sym} v,v\rangle=\langle A v,v\rangle\geq \alpha \|v\|_{V}^2$. It follows that the selfadjoint operator $A_{\sym}$ is positive and invertible, and has a positive invertible square root $A_\sym^{\nicefrac12}$ which satisfies
 \begin{equation}\label{eq:pf1}
 \|A_\sym^{\nicefrac12}v\|_{V}\geq \sqrt{\alpha} \|v\|_{V}\quad\forall v\in {V}
 \end{equation}
 and $\|A_\sym^{\nicefrac12}v\|\leq  \sqrt{M}\|v\| $, so that 
 \begin{equation}\label{eq:pf2}
  \|A_\sym^{-1/2} v\|   \geq \frac{1}{\sqrt{M}} \|v\|\quad\forall v\in {V}.  
 \end{equation} 
 Applying \eqref{eq:pf1} to $A_\sym^{-1/2}(w+A v)=A_\sym^{-1/2}w+A_\sym^{\nicefrac12}v+A_\sym^{-1/2}A_\asym v$ instead of $v$ gives 
 \begin{align*}
  \|w + A v\|^2\  \geq{}&  \alpha (\|A_\sym^{-1/2}w +A_\sym^{\nicefrac12}v+ A_\sym^{-1/2}A_{\asym}v\|_{V}^2)  \\
    ={} &   \alpha( \|A_\sym^{-1/2}w\|_{V}^2 + \|A_\sym^{\nicefrac12}v\|_{V}^2 +\|A_\sym^{-1/2}A_{\asym}v\|_{V}^2 )  \\
     & + 2\alpha \langle w, v\rangle_{V} + 2\alpha \langle v, A_{\asym} v\rangle_{V}
      + 2\alpha \langle A_\sym^{-1/2}w, A_\sym^{-1/2}A_{\asym} v\rangle_{V} 
 \end{align*}
 where the second line follows from developing the square of the norm and using $\langle A_\sym^{-1/2}\cdot,A_\sym^{\nicefrac12}\cdot\rangle_V=\langle \cdot,\cdot\rangle_V$. By anti-symmetry of $A_\asym$ we have  $\langle v, A_{\asym} v\rangle_{V}=0$, which leads to
\begin{align}
	\|w + A v\|^2\ \geq{}&  2\alpha\langle w,  v\rangle_{V}+  \alpha (\|A_\sym^{-1/2}w\|_{V}^2 + \|A_\sym^{\nicefrac12}v\|_{V}^2 +\|A_\sym^{-1/2}A_{\asym}v\|_{V}^2 )  \nonumber\\
	& + 2\alpha  \langle A_\sym^{-1/2}w, A_\sym^{-1/2}A_{\asym} v\rangle_{V}.
	\label{eq:est.w.Av.1}
\end{align}
Now we use the Young inequality combined with the Cauchy-Schwarz inequality to estimate, for all $\gamma>0$,   
\begin{align*}
2 \left|  \langle A_\sym^{-1/2}w, A_\sym^{-1/2}A_{\asym} v\rangle_{V} \right| \leq{}&  2 \|A_\sym^{-1/2}w\|_{V} \|A_\sym^{-1/2}A_{\asym}v\|_{V}\\
  \leq{}& \gamma \|A_\sym^{-1/2}w\|_{V}^2 +\frac{1}{\gamma} \|A_\sym^{-1/2}A_{\asym}v\|_{V}^2.
\end{align*}
Taking $\gamma <1$ and plugging this estimate into \eqref{eq:est.w.Av.1} yields
\begin{multline*}
	\|w+A v\|^2\  \geq   2\alpha\langle w,  v\rangle_{V}+ \alpha (1-\gamma)\|A_\sym^{-1/2}w\|_{V}^2 \\
	+\alpha \left(1-\frac{1}{\gamma}\right)\|A_\sym^{-1/2}A_{\asym}v\|_{V}^2
		+\alpha \|A_\sym^{\nicefrac12}v\|_{V}^2.
\end{multline*}
Applying \eqref{eq:pf1} with $A_\sym^{-1/2}A_\asym v$ instead of $v$ and using $\|A_\asym v\|_V\le M\|v\|_V$ gives
\[
 \|A_\sym^{-1/2}A_{\asym}v\|_{V} \le \frac {M} {\sqrt{\alpha}} \|v\|_{V},
\]
which leads, since $1-\frac{1}{\gamma}<0$, to
\begin{align*}
	\|w+A v\|^2\  \geq   2\alpha\langle w,  v\rangle_{V}+ \alpha (1-\gamma)\|A_\sym^{-1/2}w\|_{V}^2 \\
	+ M^2\left(1-\frac{1}{\gamma}\right)\|v\|_{V}^2
		+\alpha \|A_\sym^{\nicefrac12}v\|_{V}^2.
\end{align*}
Let $\gamma=\frac{1}{1+s}$ where $s>0$ is fixed later.  Then $1-\gamma=\frac{s}{1+s}$ and $1-\frac{1}{\gamma} = -s$ and, using \eqref{eq:pf1} and \eqref{eq:pf2}, it follows that 
\[ 	
  \|w+A v\|^2 \geq 2\alpha \langle w,  v\rangle_{V}+ \frac{s}{1+s}\frac{\alpha}{M}\|w\|_{V}^2 -s M^2 \|v\|_{V}^2+\alpha^2\|v\|_{V}^2.
  \]
Choose $s	=\frac{\alpha^2}{2 M^2}$ to obtain 
\[  	
  \|w+A v\|^2 \geq 2\alpha\langle w,  v\rangle_{V}+  \beta (\|w\|_{V}^2 +\|v\|_{V}^2),  
 \]
where, using $\alpha\le M$ and $1\le M$,
\[
 \beta	=\min \left\{   \frac{\alpha^2}{2}, \frac{\alpha^2}{\alpha^2 +2M^2}\frac{\alpha}{M}   \right\} = \frac{\alpha^2}{\alpha^2 +2M^2}\frac{\alpha}{M}  \ge \frac 1 3 \Big(\frac \alpha M\Big)^3.
\]
 \end{proof}

\begin{lem}\label{lem:peterpaul2}
Let $\lp$ be a Hilbert space, and let $\Phi~:~\lp\to \lp$ be a contraction (which means that $\Vert\Phi\Vert\le 1$). Let $a>0$ and $b\in [0,a]$ be given reals such that $\gamma := a - b\Vert\Phi\Vert^2>0$. The following estimate holds
 \begin{equation}\label{eq:peterpaul2}
  a\| w\|_{\lp}^2 - b \| v\|_{\lp}^2 +  \frac {9 a^2}{\gamma}\| v - \Phi w\|_{\lp}^2 \geq \frac {\gamma} 3 (\| w\|_{\lp}^2+\| v\|_{\lp}^2)\mbox{ for all }v,w\in \lp .
 \end{equation} 
\end{lem}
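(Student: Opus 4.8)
The plan is to reduce the inequality to a sum of two manifestly nonnegative quadratic terms by introducing the residual $r := v - \Phi w$ and absorbing the single negative contribution $-b\norm{v}{\lp}^2$ through a Young (Peter--Paul) inequality. Writing $v = \Phi w + r$ and using that $\Phi$ is a contraction, so that $\norm{\Phi w}{\lp}\le \norm{\Phi}{}\,\norm{w}{\lp}$, I would first record, for every parameter $\eps>0$,
\[
\norm{v}{\lp}^2 \le (1+\eps)\norm{\Phi}{}^2\norm{w}{\lp}^2 + (1+\eps^{-1})\norm{r}{\lp}^2,
\]
which follows from expanding $\norm{\Phi w + r}{\lp}^2$ and bounding the cross term by the arithmetic--geometric mean inequality.

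Next I would move the target $\tfrac\gamma3(\norm{w}{\lp}^2+\norm{v}{\lp}^2)$ to the left-hand side, collect the two $\norm{v}{\lp}^2$ contributions (the $-b\norm{v}{\lp}^2$ already present and the $-\tfrac\gamma3\norm{v}{\lp}^2$ coming over from the right) into the single negative coefficient $-(b+\tfrac\gamma3)$, and insert the displayed bound. This rewrites the quantity to be shown nonnegative as $C_1\norm{w}{\lp}^2 + C_2\norm{r}{\lp}^2$, with
\[
C_1 = a - \tfrac\gamma3 - \big(b+\tfrac\gamma3\big)(1+\eps)\norm{\Phi}{}^2, \qquad C_2 = \tfrac{9a^2}{\gamma} - \big(b+\tfrac\gamma3\big)(1+\eps^{-1}).
\]
It then suffices to exhibit a single $\eps>0$ making both coefficients nonnegative.

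The crux -- and the only place where the hypotheses $\norm{\Phi}{}^2\le1$, $b\le a$ and $\gamma=a-b\norm{\Phi}{}^2>0$ (hence $\gamma\le a$) all get used -- is this balancing of $\eps$. At $\eps=0$ one computes $C_1=\gamma(2-\norm{\Phi}{}^2)/3\ge \gamma/3>0$, and $C_1$ is decreasing in $\eps$, so requiring $C_1\ge0$ forces $\eps$ small; conversely $C_2$ decreases as $\eps^{-1}$ grows, so requiring $C_2\ge0$ forces $\eps$ not too small. Using $b+\tfrac\gamma3\le \tfrac{4a}3$ together with $\gamma\le a$, the set of admissible $\eps$ contains the window $[\tfrac{4\gamma}{23a},\tfrac{\gamma}{4a}]$, which is nonempty because $\tfrac4{23}<\tfrac14$; I would simply take $\eps=\tfrac{\gamma}{4a}$ and verify by direct substitution that $C_1\ge0$ and in fact $C_2\ge \tfrac{7a}{3}>0$. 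The factor $9$ appearing in the statement is precisely what keeps this window nonempty, so the main (and essentially only) obstacle is the elementary bookkeeping that pins down that constant.
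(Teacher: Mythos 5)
Your proof is correct and follows essentially the same route as the paper: your Young-inequality bound $\|v\|_\lp^2\le(1+\eps)\|\Phi\|^2\|w\|_\lp^2+(1+\eps^{-1})\|v-\Phi w\|_\lp^2$ is exactly the paper's estimate \eqref{eq:est.w.Phiv} with $\mu=1+\eps$, and both arguments then reduce to balancing the parameter. The only cosmetic difference is that the paper solves a small linear system for the weights (choosing $\mu=\frac{\beta+2a}{2\beta+a}$ to land on $\alpha\ge\gamma/3$ and $\theta\le 9a^2/\gamma$), whereas you target the constant $\gamma/3$ upfront and verify directly that $\eps=\frac{\gamma}{4a}$ works — your bookkeeping ($C_1\ge 0$, $C_2\ge \frac{7a}{3}$) checks out.
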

\begin{proof}
Let $v,w\in \lp$ be given.
 By the Young inequality, we have for any $\mu>0$,
 \[
  -2 \langle v, \Phi w\rangle_{\lp} \ge -\mu \| \Phi w\|_{\lp}^2- \frac 1 \mu \| v\|_{\lp}^2.
 \]
Choosing $\mu>1$, this implies that
\begin{align}
 \Vert v-{\Phi} w\Vert_{\lp}^2 \ge{}&\Vert v\Vert_{\lp}^2\left(1 - \frac 1 \mu\right) - \Vert w\Vert_{\lp}^2\Vert\Phi\Vert^2 (\mu - 1).
\label{eq:est.w.Phiv}
\end{align}
Let  $\beta := b\Vert\Phi\Vert^2 \in [0,a)$ and $ \mu := \frac {\beta + 2 a}{2 \beta + a}\in (1,+\infty)$. Let $\theta>0$ and $\alpha>0$ be such that
\[
 \theta \left(1 - \frac 1 \mu\right) = b + \alpha\quad\mbox{ and }\quad
 \theta (\mu - 1)\Vert\Phi\Vert^2 =  a - \alpha.
\]
Using $\Vert\Phi\Vert^2\le 1$, this system is satisfied for
\begin{equation}\label{eq:lower.alpha}
 \alpha = \frac {(a-\beta)(a+\beta)}{2\beta + a + \Vert\Phi\Vert^2(2a+\beta)}\ge \frac{\gamma}{3}.
\end{equation}
Using the preceding equation and $b\le a$, we get
\[
\theta =  \frac {2a+\beta}{a-\beta}(b +\alpha) \le \frac {9 a^2}{\gamma}.
\]
Invoking \eqref{eq:est.w.Phiv} then gives
\begin{align*}
 a\| w\|_{\lp}^2 - b \| v\|_{\lp}^2 +  {}&\frac {9 a^2}{\gamma}\| v - \Phi w\|_{\lp}^2 \ge a\| w\|_{\lp}^2 - b \| v\|_{\lp}^2 +  \theta\| v - \Phi w\|_{\lp}^2 \\
 \ge{}& (a - \theta (\mu - 1)\Vert\Phi\Vert^2)\| w\|_{\lp}^2+\left(\theta \left(1 - \frac 1 \mu\right) -b\right) \| v\|_{\lp}^2  \\
 ={}& \alpha(\| w\|_{\lp}^2+\| v\|_{\lp}^2).
\end{align*}
Recalling \eqref{eq:lower.alpha} concludes the proof of \eqref{eq:peterpaul2}.
\end{proof}

We now give a sufficient condition for establishing an inf-sup condition on the bilinear form $b$ defined by \eqref{eq:defb}. 
Such a condition is sufficient to obtain an error estimate for conforming schemes (see e.g. \cite{ern-guermond}). 
In the case of the (possibly non-conforming) scheme studied in this paper, it provides an essential step in the error estimate proof (see \eqref{eq:infsup}).

\begin{lem}\label{lem:suffbnb}
Let ${V}$ and $\lp$ be Hilbert spaces. Let $\widehat{Z}$ and $Y$ be the Hilbert spaces defined by $\widehat{Z} = {V}\times {V}\times \lp\times \lp$ and $Y = {V}\times \lp$. Let $A:{V}\to {V}$ be an $M$-continuous and $\alpha$-coercive linear operator in the sense of \eqref{eq:Malpha.operator}.
Let $\Phi:\lp\to \lp$ be a linear operator such that $\Vert \Phi\Vert\le 1$. We define $\widehat{b}~:~\widehat{Z}\times Y\to \mathbb{R}$ by
 \begin{equation}\label{eq:defblemme} 
 \widehat{b}((z_1,z_2,z_3,z_4),(y_1,y_2)) = \langle z_1 + A z_2,y_1\rangle_{{V}} + \langle z_3 - \Phi z_4,y_2\rangle_{\lp},
 \end{equation}
for all $(z_1,z_2,z_3,z_4)\in \widehat{Z}$ and for all $(y_1,y_2)\in Y$.

 Let $\widehat{X}\subset \widehat{Z}$ be a subspace of $\widehat{Z}$. We define the Hilbert spaces $\widehat{X}_1\subset V$, $\widehat{X}_2\subset V$, $\widehat{X}_3\subset \lp$ and $\widehat{X}_4\subset \lp$ by: for $i=1,\ldots,4$,
 \[
  \widehat{X}_i = \overline{\{ x_i: \ x\in \widehat{X}\} },\hbox{ where }x_i\hbox{ is the $i$-th component of }x\in \widehat{Z}.
 \]
 Assume that 
 \begin{equation}\label{eq:hypxunxdeux}
 \widehat{X}_1\subset \widehat{X}_2,
 \end{equation}
 and that there exist $\limconf>0$ and $\delta >0$ such that, for all $x\in\widehat{X}$,
\begin{equation}\label{eq:condlim}
 \langle x_1,x_2\rangle_{{V}} +\frac {\alpha^2} {12\, M^3}(\Vert  x_2\Vert_{V}^2 + \Vert x_1\Vert_{{V}}^2)\ge \mu\Vert x_4\Vert_{\lp}^2 -\nu \Vert x_3\Vert_{\lp}^2,
\end{equation}
for some $\mu\in (0,\limconf]$ and $\nu\in [0,\mu]$ with $\mu - \nu\Vert\Phi\Vert^2 \ge \delta$.
 Then,  there exists $\widehat{\beta}>0$, only depending on $\alpha$, $M$, $\limconf$ and $\delta$ (and not on $\mu$, $\nu$ and $\Vert\Phi\Vert$) such that
 \begin{equation}\label{eq:bnbdisc}
  \sup_{y\in \widehat{X}_2\times \widehat{X}_3, \Vert y\Vert_Y = 1} \widehat{b}(x,y) \ge  \widehat{\beta} \Vert x\Vert_{\widehat{Z}}\quad\forall x\in \widehat{X}.
 \end{equation}
\end{lem}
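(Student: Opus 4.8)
The plan is to establish the inf--sup bound \eqref{eq:bnbdisc} constructively: for each $x=(x_1,x_2,x_3,x_4)\in\widehat X$ I will exhibit an explicit admissible pair $y=(y_1,y_2)\in\widehat X_2\times\widehat X_3$ satisfying $\widehat b(x,y)\ge\beta_0\Vert x\Vert_{\widehat Z}^2$ together with $\Vert y\Vert_Y\le C_Y\Vert x\Vert_{\widehat Z}$; dividing then yields \eqref{eq:bnbdisc} with $\widehat\beta=\beta_0/C_Y$. The delicate feature of the statement is that the test functions are confined to $\widehat X_2\times\widehat X_3$, so one cannot pair the first slot of $\widehat b$ against the natural representative $x_1+Ax_2$, nor the second against $x_3-\Phi x_4$, since $Ax_2$ and $\Phi x_4$ generically leave these subspaces. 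I expect this restriction to be the main obstacle, and the whole construction is designed to bypass it using the components of $x$ themselves: hypothesis \eqref{eq:hypxunxdeux} gives $x_1\in\widehat X_2$, so that $y_1:=x_2+\eta x_1$ is admissible, and $x_3\in\widehat X_3$ is admissible for $y_2$.

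For the first slot I would take $\eta=\alpha^2/(3M^3)$ and expand $\langle x_1+Ax_2,\,x_2+\eta x_1\rangle_{V}$. Using the $\alpha$--coercivity and $M$--boundedness of $A$ from \eqref{eq:Malpha.operator}, namely $\langle Ax_2,x_2\rangle_{V}\ge\alpha\Vert x_2\Vert_{V}^2$ and $\eta|\langle Ax_2,x_1\rangle_{V}|\le\eta M\Vert x_1\Vert_{V}\Vert x_2\Vert_{V}$, followed by Young's inequality and then hypothesis \eqref{eq:condlim}, the coefficients of $\Vert x_1\Vert_{V}^2$ and $\Vert x_2\Vert_{V}^2$ remain positive precisely because $\eta$ is tuned to the constant $\alpha^2/(12M^3)$ occurring in \eqref{eq:condlim}; this matching mirrors the algebraic content of Lemma~\ref{lem:zigoto} in the non-squared (bilinear) setting. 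One obtains $\langle x_1+Ax_2,y_1\rangle_{V}\ge\mu\Vert x_4\Vert_{\lp}^2-\nu\Vert x_3\Vert_{\lp}^2+\tfrac{\alpha^2}{12M^3}\Vert x_1\Vert_{V}^2+\tfrac{3\alpha}{4}\Vert x_2\Vert_{V}^2$.

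For the second slot I would take $y_2=2\mu\,x_3$, so that $\langle x_3-\Phi x_4,y_2\rangle_{\lp}=2\mu\Vert x_3\Vert_{\lp}^2-2\mu\langle\Phi x_4,x_3\rangle_{\lp}$. Adding the two contributions, the $x_3,x_4$--terms assemble into $S:=\mu\Vert x_4\Vert_{\lp}^2+(2\mu-\nu)\Vert x_3\Vert_{\lp}^2-2\mu\langle\Phi x_4,x_3\rangle_{\lp}$. Bounding the cross term by $2\mu\Vert\Phi\Vert\,\Vert x_4\Vert_{\lp}\Vert x_3\Vert_{\lp}$ and reading $S$ as a quadratic form in $(\Vert x_3\Vert_{\lp},\Vert x_4\Vert_{\lp})$, its determinant equals $\mu\big(\gamma+(\mu-\nu)(1-\Vert\Phi\Vert^2)\big)\ge\mu\gamma\ge\mu\delta$ (with $\gamma=\mu-\nu\Vert\Phi\Vert^2\ge\delta$ and $\nu\le\mu$), while its trace is $\le 3\mu\le 3\limconf$; hence its smallest eigenvalue is $\ge\delta/3$ and $S\ge\tfrac{\delta}{3}(\Vert x_3\Vert_{\lp}^2+\Vert x_4\Vert_{\lp}^2)$. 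This is exactly where the contraction property $\Vert\Phi\Vert\le 1$ enters, in the spirit of Lemma~\ref{lem:peterpaul2}; the positivity of the $\Vert x_3\Vert_{\lp}^2$--coefficient even makes this direct two-by-two estimate self-contained.

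Combining the two slots gives $\widehat b(x,y)\ge\min\!\big(\tfrac{\alpha^2}{12M^3},\tfrac{\delta}{3}\big)\Vert x\Vert_{\widehat Z}^2=:\beta_0\Vert x\Vert_{\widehat Z}^2$, whereas $\Vert y\Vert_Y^2=\Vert x_2+\eta x_1\Vert_{V}^2+4\mu^2\Vert x_3\Vert_{\lp}^2\le\max(2,4\limconf^2)\Vert x\Vert_{\widehat Z}^2$, using $\eta\le 1$ and $\mu\le\limconf$. Taking $C_Y=\sqrt{\max(2,4\limconf^2)}$ and dividing by $\Vert y\Vert_Y$ yields \eqref{eq:bnbdisc} with $\widehat\beta=\beta_0/C_Y$, which depends only on $\alpha,M,\limconf,\delta$ and not on $\mu,\nu,\Vert\Phi\Vert$, as required. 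The two points needing care are the tuning of $\eta$ against the constant in \eqref{eq:condlim} and the positive-definiteness of $S$; everything else is routine bookkeeping.
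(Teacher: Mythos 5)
Your proof is correct, and it takes a genuinely different route from the paper's. The paper proceeds by duality: introducing the orthogonal projection $P_3$ onto $\widehat X_3$, it writes the supremum as $\mathcal N(x)^2=\Vert x_1+Ax_2\Vert_{V}^2+\Vert P_3(x_3-\Phi x_4)\Vert_{\lp}^2$ (equation \eqref{eq:bnbX0}), then lower-bounds the first term via Lemma \ref{lem:zigoto} (the $A_{\sym}^{1/2}$ argument, which is precisely the origin of the constant $\frac{\alpha^2}{12M^3}$ in \eqref{eq:condlim}, through the split $\frac13(\alpha/M)^3=2\alpha\frac{\alpha^2}{12M^3}+\frac16(\alpha/M)^3$) and handles the $(x_3,x_4)$ block via Lemma \ref{lem:peterpaul2} applied to $P_3\circ\Phi$, the two pieces being combined through a weight $\theta$. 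You instead exhibit the explicit admissible test pair $y=(x_2+\eta x_1,\,2\mu x_3)$ -- admissible exactly thanks to \eqref{eq:hypxunxdeux} -- and run direct algebra: a Young inequality tuned by $\eta=\alpha^2/(3M^3)$ replaces Lemma \ref{lem:zigoto} (your constants check out: splitting the cross term as $\eta M\Vert x_1\Vert_V\Vert x_2\Vert_V\le\frac{\eta}{2}\Vert x_1\Vert_V^2+\frac{\eta M^2}{2}\Vert x_2\Vert_V^2$ and using $\alpha\le M$, $M\ge1$ yields exactly $\frac{\alpha^2}{12M^3}\Vert x_1\Vert_V^2+\frac{3\alpha}{4}\Vert x_2\Vert_V^2$), and an elementary $2\times 2$ eigenvalue bound $\lambda_{\min}\ge\det/\mathrm{tr}\ge\mu\delta/(3\mu)=\delta/3$ replaces Lemma \ref{lem:peterpaul2}. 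What each approach buys: the paper's factors the work into two reusable lemmas and exposes the sharp mechanism behind the constant in \eqref{eq:condlim}; yours is self-contained and constructive, and has the extra merit of never requiring $x_1+Ax_2$ to belong to $\widehat X_2$ -- the equality \eqref{eq:bnbX0} tacitly uses this (hypothesis \eqref{eq:hypxunxdeux} gives $x_1\in\widehat X_2$ but not $Ax_2\in\widehat X_2$; strictly the first term should be $\Vert P_2(x_1+Ax_2)\Vert_V$ with $P_2$ the projection onto $\widehat X_2$, and lower-bounding that projected norm is exactly what your explicit choice of $y_1\in\widehat X_2$ accomplishes). In a final write-up you should record two small points: the inequality $\alpha\le M$, which follows from \eqref{eq:Malpha.operator} by the Cauchy--Schwarz inequality and is needed in your bookkeeping (including $\eta\le 1$), and the degenerate case $y=0$, where your bound $\widehat b(x,y)\ge\beta_0\Vert x\Vert_{\widehat Z}^2$ forces $x=0$ so that \eqref{eq:bnbdisc} holds trivially.
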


\begin{proof}
Let $x\in\widehat{X}$.
Let $ P_{3}:\lp\to \widehat{X}_3\subset \lp$ be the orthogonal projection onto $\widehat{X}_3$.
Then, setting
\[
 \mathcal{N}(x) = \sup_{y\in \widehat{X}_2\times \widehat{X}_3, \Vert y\Vert_Y = 1} \widehat{b}(x,y),
\]
we have, using \eqref{eq:hypxunxdeux},
\begin{equation}\label{eq:bnbX0}
	\mathcal{N}(x)^2
	= \Vert x_1 + A x_2\Vert_{{V}}^2 + \Vert P_{3}(x_3 - \Phi x_4)\Vert_{\lp}^2.
	\end{equation}
We then obtain, for $\theta>0$ to be chosen later
\begin{equation}\label{eq:bnbX}
		\mathcal{N}(x)^2
	\ge \frac 1 {\max(1,\theta)} \Vert x_1 + A x_2\Vert_{{V}}^2 + \frac{\theta}{\max(1,\theta)} \Vert P_{3}(x_3 - \Phi x_4)\Vert_{\lp}^2.
	\end{equation}

 We apply Lemma \ref{lem:zigoto} to obtain
\begin{align*}
 \Vert  x_1 +  A x_2\Vert_{{V}}^2  \ge{}& 2\alpha \langle x_1,x_2\rangle_{{V}} + \frac 1 3 \left(\frac \alpha M\right)^3(\Vert  x_2\Vert_{V}^2 + \Vert x_1 \Vert_{{V}}^2 )\\
  \ge{}& 2\alpha(\mu\Vert x_4\Vert_{\lp}^2  - \nu\Vert  x_3\Vert_{\lp}^2)
 + \frac 1 6 \left(\frac \alpha M\right)^3 (\Vert  x_2\Vert_{V}^2 + \Vert x_1 \Vert_{{V}}^2 ),
\end{align*}
where the second line follows from the assumption \eqref{eq:condlim}, after writing $\frac 1 3 \left(\frac \alpha M\right)^3=2\alpha \frac{\alpha^2}{12 M^3}+\frac 1 6 \left(\frac \alpha M\right)^3$.

Together with \eqref{eq:bnbX}, this yields
\begin{align}
 \max(1,\theta)\mathcal{N}(x)^2 \ge 2\alpha(\mu\Vert x_4\Vert_{\lp}^2  - \nu\Vert  x_3\Vert_{\lp}^2)
 + \frac {\alpha^3} {6\, M^3} (\Vert  x_2\Vert_{V}^2 + \Vert x_1 \Vert_{{V}}^2 )\nonumber\\
  + \theta\Vert P_{3}(x_3 - \Phi x_4)\Vert_{\lp}^2.
  \label{eq:est.N}
\end{align}
Noting that $ P_{3}(x_3 - \Phi x_4) = x_3 - P_{3}\Phi x_4$ and that $\Vert P_{3}\circ\Phi \Vert\le \Vert\Phi\Vert \le 1$, we use Lemma \ref{lem:peterpaul2} with $P_3\circ\Phi$ instead of $\Phi$, $a= 2\alpha\mu$ and $b= 2\alpha\nu$, which gives $\gamma \ge 2\alpha(\mu - \nu\Vert\Phi\Vert^2)\ge 2\alpha\delta$. If we set
$\theta = \frac {9 a^2}{\gamma} \le \frac {18\alpha\mu^2}{\mu - \nu\Vert\Phi\Vert^2}\le  \frac {18\alpha\limconf^2}{\delta}$, we get
\[
 2\alpha(\mu\Vert x_4\Vert_{\lp}^2  - \nu\Vert  x_3\Vert_{\lp}^2) + \theta \Vert  P_{3}(x_3 - \Phi x_4)\Vert_{\lp}^2
 \ge  \frac{\gamma} {3} (\Vert x_3\Vert_{\lp}^2+\Vert x_4\Vert_{\lp}^2).
\]
Combined with \eqref{eq:est.N}, this gives
\[
\max\left(1,\frac {18\alpha\limconf^2}{\delta}\right) \mathcal{N}(x)^2
 \ge   \frac {\alpha^3} {6\, M^3}  (\Vert  x_1\Vert_{{V}}^2 + \Vert x_2\Vert_{{V}}^2) + \frac{2\alpha\delta} {3}(\Vert x_3\Vert_{\lp}^2+\Vert x_4\Vert_{\lp}^2),
\]
which leads to \eqref{eq:bnbdisc}.
\end{proof}

Let us now prove the error estimate.

\begin{proof}[Proof of Theorem \ref{thm:errest}.]
Let $v\in V_\disc$ and $z\in X_\disc$ be given. Definition \eqref{abs:defwdiscV} of $\limcT_\disc(\bv)$ give
\[
 \int_0^T \Big(\langle \bv(t),\agrad_\disc  v (t)\rangle_{\lpd} + \langle \adiv\bv(t),\api_\disc v(t)\rangle_{\lp}
\Big){\rm d}t
\le  \limcT_\disc(\bv) \Vert \wn^{\nicefrac12}\agrad_\disc v\Vert_{L^2(\lpd)}.
\]
This yields, using the definition of $\bv$, the relation \eqref{eq:pbcontriesz} (which gives $\adiv \bv=-f$), and \eqref{eq:schemevar.recast},
\begin{multline*}
\int_0^T \Big(\langle  \wn\agrad Ru'(t) +\Lambda(t) \agrad u(t)  - ( \wn\agrad_\disc R_\disc\partial w_\disc(t) + \Lambda(t) \agrad_\disc w_\disc(t)) ,\agrad_\disc v(t)\rangle_{\lpd} \Big){\rm d}t\\
+\langle\xi_0 - (\api_\disc w_\disc(0) - \Phi \api_\disc  w_\disc(T)) ,  \api_\disc z\rangle_{\lp}
\le  \limcT_\disc(\bv) \Vert \wn^{\nicefrac12}\agrad_\disc v\Vert_{L^2(\lpd)}.
\end{multline*}
Using $u(0) - \Phi u(T) = \xi_0$, we get
\begin{multline}
\int_0^T \Big(\langle  \wn\agrad Ru'(t) +\Lambda(t) \agrad  u(t)  - ( \wn\agrad_\disc R_\disc\partial w_\disc(t) + \Lambda(t) \agrad_\disc w_\disc(t)) ,\agrad_\disc v(t)\rangle_{\lpd} 
\Big){\rm d}t
\\
 +\langle u(0) - \Phi u(T) - (\api_\disc w_\disc(0) - \Phi \api_\disc  w_\disc(T)) ,  \api_\disc z\rangle_{\lp}
\\
\le  \limcT_\disc(\bv) \Vert \wn^{\nicefrac12}\agrad_\disc v\Vert_{L^2(\lpd)}.
\label{eq:error.est.1}
\end{multline}
We then take an arbitrary element $\widetilde{v}\in W_\disc$ and notice that, by definition \eqref{eq:def.Suv} of $\disT_\disc(u,\widetilde{v})$ and since $\Phi$ is a contraction, 
\begin{multline*}
\int_0^T \langle \wn[\agrad_\disc R_\disc\partial \widetilde{v}-\agrad R u'](t),\agrad_\disc v(t)\rangle_{\lpd} + \langle \Lambda(t)[\agrad_\disc \widetilde{v}-\agrad u](t) ,\agrad_\disc v(t)\rangle_{\lpd} 
{\rm d}t  \\
+\langle \api_\disc\widetilde{v}(0)-u(0) - \Phi (\api_\disc  \widetilde{v}(T)-u(T)) ,  \api_\disc z\rangle_{\lp} \\
\le \disT_\disc(u,\widetilde{v})\Vert \wn^{\nicefrac12}\agrad_\disc v\Vert_{L^2(\lpd)} +   2\disT_\disc(u,\widetilde{v})\Vert \api_\disc z\Vert_{\lp}.
\end{multline*}
Adding this inequality to \eqref{eq:error.est.1} yields
\begin{multline*}
\int_0^T \langle \wn\agrad_\disc R_\disc\partial (\widetilde{v} - w_\disc)(t) +\Lambda(t) \agrad_\disc (\widetilde{v}(t) - w_\disc(t)) ,\agrad_\disc v(t)\rangle_{\lpd} 
{\rm d}t  \\
\langle \api_\disc(\widetilde{v}(0)  -  w_\disc(0)) - \Phi \api_\disc  (\widetilde{v}(T) -w_\disc(T)) ,  \api_\disc z\rangle_{\lp} \\
\le    
(\disT_\disc(u,\widetilde{v})+\limcT_\disc(\bv))\Vert \wn^{\nicefrac12}\agrad_\disc v\Vert_{L^2(\lpd)} +   2\disT_\disc(u,\widetilde{v})\Vert \api_\disc z\Vert_{\lp}.
\end{multline*}
Using the bilinear form $\widehat{b}$ defined by \eqref{eq:defblemme}, with $V=L^2(0,T;\lpd)$ endowed with the inner product $\langle\wn{\cdot},{\cdot}\rangle_{L^2(\lpd)}$ and $A=\wn^{-1}\Lambda$ (which satisfies \eqref{eq:Malpha.operator} by \eqref{eq:wnLambda.M}--\eqref{eq:wnLambda.alpha}), the preceding inequality implies
\begin{equation}\label{eq:est.key.1}
 \widehat{b}((z_1,z_2,z_3,z_4),(y_1,y_2)) \le \widehat{c}_1 \Vert y_1\Vert_{V} + \widehat{c}_2 \Vert y_2\Vert_{\lp},
\end{equation}
with
\begin{alignat*}{3}
z_1 ={}& \agrad_\disc R_\disc\partial (\widetilde{v} - w_\disc), &\qquad z_2 ={}& \agrad_\disc (\widetilde{v} - w_\disc) , \\
z_3 ={}& \api_\disc(\widetilde{v}(0)  -  w_\disc(0)), &\qquad z_4 ={}& \api_\disc  (\widetilde{v}(T) -w_\disc(T)),\\
y_1 ={}& \agrad_\disc v,&\qquad y_2 ={}& \api_\disc z,\\
\widehat{c}_1 ={}& \disT_\disc(u,\widetilde{v})+\limcT_\disc(\bv),&\qquad
\widehat{c}_2 ={}& 2\disT_\disc(u,\widetilde{v}).
\end{alignat*}
We therefore aim to apply Lemma \ref{lem:suffbnb} with
\[
\widehat{X}=\agrad_\disc(V_\disc)\times \agrad_\disc(V_\disc)\times \api_\disc(X_\disc)\times \api_\disc(X_\disc).
\]
Condition \eqref{eq:hypxunxdeux} is satisfied since $\widehat{X}_1 = \widehat{X}_2$.
Let $\widehat{C}$ be an upper bound of the norm of $\api_\disc$ defined by \eqref{abs:defcoercivity}. Adding \eqref{eq:hypsufbnb.1} to $(1+\frac{\widehat{C}^2}{T})^{-1}\frac{\alpha^2}{12 M^3}\times\eqref{eq:hypsufbnb.2}$ shows that the hypothesis \eqref{eq:condlim} is satisfied with
\[
\mu=\frac12+\left(1+\frac{\widehat{C}^2}{T}\right)^{-1}\frac{\alpha^2}{12 M^3}\quad\mbox{ and }\quad
\nu=\frac12.
\]
We note that $\mu-\nu\|\Phi\|^2\ge \mu-\nu=\left(1+\frac{\widehat{C}^2}{T}\right)^{-1}\frac{\alpha^2}{12 M^3}=:\delta$.
Taking the maximum of \eqref{eq:est.key.1} over all $(y_1,y_2)\in \agrad_\disc(V_\disc)\times \api_\disc(X_\disc)$ with norm in $V\times \lp$ equal to 1,
Lemma \ref{lem:suffbnb} therefore yields $\widehat{\beta}>0$ depending only on $\alpha$, $M$, $\widehat{C}$ and $T$ such that
\begin{multline}\label{eq:infsup}
 \widehat{\beta}\Big( \Vert \wn^{\nicefrac12}\agrad_\disc R_\disc\partial (\widetilde{v}  -  w_\disc)\Vert_{L^2(\lpd)}^2 +\Vert \wn^{\nicefrac12}\agrad_\disc  (\widetilde{v}  -  w_\disc)\Vert_{L^2(\lpd)}^2\\ 
 + \Vert\api_\disc (\widetilde{v}  -  w_\disc)(0)\Vert_\lp^2+ \Vert\api_\disc (\widetilde{v}  -  w_\disc)(T)\Vert_\lp^2\Big)^{\nicefrac12}\\
 \le \left[ \left(\disT_\disc(u,\widetilde{v})+\limcT_\disc(\bv) \right)^2 +  4\disT_\disc(u,\widetilde{v})^2\right]^{\nicefrac12}\\
 \le \disT_\disc(u,\widetilde{v})+\limcT_\disc(\bv)  +  2\disT_\disc(u,\widetilde{v}),
\end{multline}
where we use $(a^2+b^2)^{\nicefrac12}\le a+b$ for positive $a$ and $b$ in the last inequality.
By \eqref{eq:wnLambda.rho} we have 
\[
\Vert \wn^{-\nicefrac12}\Lambda\agrad_\disc  (\widetilde{v}  -  w_\disc)\Vert_{L^2(\lpd)}
\le \rho \Vert\wn^{\nicefrac12}\agrad_\disc  (\widetilde{v}  -  w_\disc)\Vert_{L^2(\lpd)}.
\]
Plugging this into \eqref{eq:infsup} and using \eqref{eq:majunif} in Lemma \ref{lem:hypsufbnb} together with $a+b+c\le \sqrt{3}(a^2+b^2+c^2)^{\nicefrac12}$, we infer
\begin{multline*}
 \widehat{\beta}\Big( \Vert \wn^{\nicefrac12}\agrad_\disc R_\disc\partial (\widetilde{v}  -  w_\disc)\Vert_{L^2(\lpd)} +\Vert \wn^{-\nicefrac12}\Lambda\agrad_\disc  (\widetilde{v}  -  w_\disc)\Vert_{L^2(\lpd)}\\
  + \max_{t\in[0,T]} \Vert\api_\disc (\widetilde{v}  -  w_\disc)(t)\Vert_\lp\Big)\\
 \le \sqrt{3}\max(1,\rho)\left( 3\disT_\disc(u,\widetilde{v})+\limcT_\disc(\bv)\right).
\end{multline*}
Using the triangle inequality in the definition \eqref{eq:def.Suv} of $\disT_\disc$, we infer
\[
 \widehat{\beta} \disT_\disc(u, w_\disc)
 \le \sqrt{3}\max(1,\rho)\left( 3\disT_\disc(u,\widetilde{v})+\limcT_\disc(\bv)\right)+ \widehat{\beta}\disT_\disc(u, \widetilde{v}).
\]
Since $\widetilde{v}$ is arbitrary in $W_\disc$, this concludes the proof of the second inequality in \eqref{eq:errestgd}.

\medskip

Let us now turn to the first inequality in \eqref{eq:errestgd}. We first note that
\[
 \inf_{v\in W_\disc} \disT_\disc(u,v) \le \disT_\disc(u,w_\disc).
\]
To bound $\limcT_\disc(\bv)$ we recall that $\bv=\wn\agrad Ru' + \Lambda \agrad u  +\bF$ satisfies $-\adiv \bv=f$ (see \eqref{eq:pbcontriesz}), and use the scheme \eqref{eq:schemevar.recast} (with $z=0$) to write, for any $v\in V_\disc\backslash\{0\}$,
\begin{align*}
\langle \bv,\agrad_\disc v\rangle_\lpd+{}&\langle\adiv\bv,\api_\disc v\rangle_L\\
={}& \int_0^T \Big(\langle  \wn\agrad Ru'(t) +\Lambda(t) \agrad  u(t)  \\
 &\qquad\qquad- ( \wn\agrad_\disc R_\disc\partial w_\disc(t) + \Lambda(t) \agrad_\disc w_\disc(t)) ,\agrad_\disc v(t)\rangle_{\lpd} 
\Big){\rm d}t \\
\le{}& \left(\Vert \wn^{\nicefrac12}(\agrad Ru' - \agrad_\disc R_\disc\partial w_\disc)\Vert_{L^2(\lpd)} + \Vert \wn^{-\nicefrac12}\Lambda(\agrad  u - \agrad_\disc w_\disc)\Vert_{L^2(\lpd)}\right)\\
&\times\Vert \wn^{\nicefrac12}\agrad_\disc v\Vert_{L^2(\lpd)}.
\end{align*}
Dividing by $\Vert \wn^{\nicefrac12}\agrad_\disc v\Vert_{L^2(\lpd)}$ and taking the supremum over $v\in V_\disc\backslash\{0\}$ shows that $\limcT_\disc(\bv)\le \disT_\disc(u,w_\disc)$, which concludes the proof.
\end{proof}

\section{Interpolation results}\label{sec:inter}

In this section, we consider a sequence $(\mathcal D_{h_n})_{n\in\N}$ of gradient discretisations which is 
\begin{enumerate}
 \item \emph{Consistent}, in the sense that 
\be
\forall \varphi\in \wunp\,,\ \lim_{n\to\infty} \consist_{\disc_n}(\varphi)=0,
\label{abs:strongconsistn}
\ee
where
\be\label{def:sigmah}
\begin{aligned}
\consist_{\disc_n}(\varphi) ={}& \inf_{v\in X_{\disc_n}}\delta_{\disc_n}(\varphi,v),\\
\mbox{ with }&
\delta_{\disc_n}(\varphi,v) = \Bigl(\norm{\api_{\disc_n} v - \varphi}{\lp }^2 + \norm{\agrad_{\disc_n} v-\agrad\varphi}{\lpd}^2\Bigr)^{\nicefrac12}.
\end{aligned}
\ee
\item \emph{Limit-conforming}, in the sense that
\be
\forall \bvarphi\in \hdiv\,,\ \lim_{n\to\infty} \limconf_{\disc_n}(\bvarphi) = 0,
\label{abs:limconfn}\ee
where
\be
\forall \bvarphi\in \hdiv \,,\;
\limconf_{\disc_n}(\bvarphi) = \sup_{v\in X_{\disc_n}\setminus\{0\}}\frac{\dsp
\left|\langle \bvarphi,\agrad_{\disc_n} v\rangle_{\lpd} + \langle \adiv\bvarphi,\api_{\disc_n} v\rangle_{\lp}
\right|}{\Vert  v \Vert_{\disc_n}}.
\label{abs:defwdiscn}\ee
\end{enumerate}
Applying \cite[Lemma 3.10]{DEGH18} or \cite[Lemma 2.6]{gdm} for example, we can state that there exists $\widehat{C}>0$ such that, for all $n\in\N$,
\be
\max_{v\in X_{\disc_n}\setminus\{0\}}\frac {\norm{\api_{\disc_n} v}{\lp }} {\Vert \agrad_\disc v\Vert_\lpd}\le \widehat{C}.
\label{abs:defcoercivityn}\ee 
In the following, we denote by $C_i$, for $i\in\mathbb{N}$, various constants which only depend on $\widehat{C}$, $T$, $C_T$ (see \eqref{eq:embWinH}), $\Lambda$ and $\wn$.

Let $N_n$ a sequence of positive integers diverging to infinity, and let $k_n = T/N_n$.
This section is devoted to the proof of the following theorem, which enables us to apply Theorem \ref{thm:errest} for proving the convergence of the scheme under the hypotheses of this section.
\begin{thm}\label{lem:WT} Under the hypotheses of this section, the following holds.

For any $\bvarphi\in L^2(0,T;\hdiv)$,
\begin{equation}\label{eq:WT}
 \lim_{n\to\infty} \limcT_{\disc_n}(\bvarphi) = 0.
\end{equation}

Moreover, recalling the definition \eqref{eq:def.Suv} of $\disT_\disc$, we have, for all $w\in W$,
\begin{equation}\label{eq:ST}
 \lim_{n\to\infty} \inf_{v\in W_{\disc_n}} \disT_{\disc_n}(w,v) = 0.
\end{equation}

As a consequence, letting $u$ be the solution to \eqref{eq:pbcont}, and $w_{\disc_n}$ be the solution to \eqref{eq:scheme} for $\disc = \disc_n$, then
\begin{equation}\label{eq:cvscheme}
 \lim_{n\to\infty} \disT_{\disc_n}(u,w_{\disc_n}) = 0.
\end{equation}

\end{thm}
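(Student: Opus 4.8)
The plan is to prove the three limits in the order stated, since \eqref{eq:cvscheme} will follow immediately from the first two by invoking Theorem \ref{thm:errest}. The key structural observation is that both $\limcT_{\disc_n}$ and $\inf_{W_{\disc_n}}\disT_{\disc_n}$ are built from the spatial quantities $\consist_{\disc_n}$ and $\limconf_{\disc_n}$, for which consistency \eqref{abs:strongconsistn} and limit-conformity \eqref{abs:limconfn} already guarantee pointwise convergence to zero. The work therefore lies in transferring these \emph{spatial} convergences to the \emph{space--time} setting, integrating in time and controlling the time-discretisation error introduced by the piecewise-constant structure of $W_{\disc_n}$ and $V_{\disc_n}$.

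First I would prove \eqref{eq:WT}. Fix $\bvarphi\in L^2(0,T;\hdiv)$. For a test function $v\in V_{\disc_n}$ that is piecewise constant with values $v^{(m)}$, the numerator in \eqref{abs:defwdiscV} decouples over time slabs, and on each slab I would introduce the time-averaged field $\bvarphi^{(m)}=\frac1{k_n}\int_{(m-1)k_n}^{mk_n}\bvarphi(t)\,\d t\in\hdiv$, so that the integrand is compared against $\langle\bvarphi^{(m)},\agrad_{\disc_n}v^{(m)}\rangle_{\lpd}+\langle\adiv\bvarphi^{(m)},\api_{\disc_n}v^{(m)}\rangle_{\lp}$. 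The slabwise term is then bounded by $\limconf_{\disc_n}(\bvarphi^{(m)})\,\Vert v^{(m)}\Vert_{\disc_n}$ using \eqref{abs:defwdiscn}, and a Cauchy--Schwarz inequality in time (together with \eqref{abs:defweightednorm}) recombines the slabs into the denominator $\Vert\wn^{\nicefrac12}\agrad_{\disc_n}v\Vert_{L^2(\lpd)}$. This yields $\limcT_{\disc_n}(\bvarphi)^2\le \sum_m k_n\,\limconf_{\disc_n}(\bvarphi^{(m)})^2$, a Riemann-type sum. To pass to the limit I would combine the pointwise bound $\limconf_{\disc_n}(\bvarphi^{(m)})\to 0$ with a uniform control: the inequality $\limconf_{\disc_n}(\bpsi)\le 2\Vert\bpsi\Vert_{\hdiv}$ (valid by Cauchy--Schwarz and \eqref{abs:defcoercivityn}) shows the summand is dominated by $C\Vert\bvarphi^{(m)}\Vert_{\hdiv}^2$, whose time-sum is bounded by $\Vert\bvarphi\Vert_{L^2(\hdiv)}^2$ via Jensen's inequality on the averages, so dominated convergence (or an $\eps$/density argument) closes the estimate.

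Next, for \eqref{eq:ST} I would fix $w\in W$ and \emph{construct} a good interpolant $v_n\in W_{\disc_n}$ rather than estimate the infimum abstractly. The natural choice is to set $v_n^{(m)}$ to realise (approximately) the spatial interpolation of a time-averaged value of $w$, for instance the minimiser of $\delta_{\disc_n}(w^{(m)},\cdot)$ where $w^{(m)}$ is a suitable average of $w$ over the $m$-th slab; one must check all three terms of $\disT_{\disc_n}$ in \eqref{eq:def.Suv}. The gradient term $\Vert\wn^{-\nicefrac12}\Lambda(\agrad u-\agrad_{\disc_n}v_n)\Vert_{L^2(\lpd)}$ and the $\agrad_{\disc_n}R_{\disc_n}\partial v_n$ term are handled by consistency applied to $w(t)$ and to $Ru'(t)$ respectively, integrated in time and using that $R_{\disc_n}\partial v_n$ should approximate $Ru'$; here the definition \eqref{eq:defRdisc} of $R_{\disc_n}$ mirrors \eqref{eq:defR}, so $R_{\disc_n}\partial v_n$ is an approximation of $Ru'$ precisely when $\partial v_n$ approximates $u'$ in a dual sense. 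The uniform-in-time term $\max_t\Vert w(t)-\api_{\disc_n}v_n(t)\Vert_\lp$ is controlled using the embedding \eqref{eq:embWinH} of $W$ into $C([0,T];\lp)$ and the pointwise consistency, after smoothing $w$ in time.

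I expect the main obstacle to be \eqref{eq:ST}, specifically the simultaneous control of the time-derivative (Riesz) term and the pointwise-in-time term of $\disT_{\disc_n}$. Consistency \eqref{abs:strongconsistn} is a statement about a single fixed element of $\wunp$, whereas here one must interpolate the whole time-dependent function $w(t)$ \emph{and} its time derivative $u'(t)\in\wunp'$ uniformly, which requires first approximating $w$ by a time-regular function (e.g.\ a finite sum of the form $\sum_j\phi_j(t)\psi_j$ with $\psi_j\in\wunp$, using density of such functions in $W$) and only then discretising in space, so that consistency can be applied to the finitely many fixed spatial profiles $\psi_j$ while the time factors are handled by classical piecewise-constant approximation estimates. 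Matching the discrete Riesz operator $R_{\disc_n}$ to the continuous one $R$ along this density argument, and ensuring the time-averaging in $V_{\disc_n}$ versus the pointwise values in $W_{\disc_n}$ are reconciled, is the delicate bookkeeping; once these interpolants are built and the two errors \eqref{eq:WT} and \eqref{eq:ST} are established, the final convergence \eqref{eq:cvscheme} is immediate from the right-hand inequality of \eqref{eq:errestgd}, since $C_\disc$ is uniformly bounded by \eqref{abs:defcoercivityn}.
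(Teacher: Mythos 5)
Your plan follows the paper's architecture: transfer the spatial notions \eqref{abs:strongconsistn}--\eqref{abs:limconfn} to space--time by integrating in $t$, build a nodal interpolant in $W_{\disc_n}$ for time-regular $w$, conclude \eqref{eq:ST} by density, and get \eqref{eq:cvscheme} from Theorem \ref{thm:errest} with $C_\disc$ bounded via \eqref{abs:defcoercivityn}. For \eqref{eq:WT} your slab-average variant is workable, but note that ``dominated convergence'' does not literally apply to $\sum_m k_n\,\limconf_{\disc_n}(\bvarphi^{(m)})^2$, since the arguments $\bvarphi^{(m)}$ move with $n$; your parenthetical $\eps$/density fallback is then mandatory, not optional. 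The paper avoids this entirely by applying \eqref{abs:defwdiscn} pointwise in time to $v(t)\in X_{\disc_n}$, obtaining $\limcT_{\disc_n}(\bvarphi)\le \bigl(\int_0^T\limconf_{\disc_n}(\bvarphi(t))^2\,{\rm d}t\bigr)^{\nicefrac12}$ with a fixed integrand dominated by $\Vert\bvarphi(t)\Vert_{\hdiv}$, to which the dominated convergence theorem applies directly.

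The genuine gap is in your treatment of the Riesz-derivative term of \eqref{eq:def.Suv}, which you claim is ``handled by consistency applied to $Ru'(t)$.'' Consistency alone cannot control $\Vert\wn^{\nicefrac12}(\agrad Rw'-\agrad_{\disc_n}R_{\disc_n}\partial v_n)\Vert_{L^2(\lpd)}$ in the nonconforming setting. For fixed $u\in\wunp$, the element $R_{\disc_n}\mathcal{I}_{\disc_n}u$ is, by \eqref{eq:defRdisc}, the solution of a \emph{discrete elliptic problem} --- up to a perturbation of size $\consist_{\disc_n}(u)$, it is the gradient-scheme approximation of the continuous problem \eqref{eq:defR} defining $Ru$ --- and by Strang's second lemma its distance to $Ru$ is governed by the interpolation error \emph{plus the conformity error} $\limconf_{\disc_n}(\agrad Ru)$; if limit-conformity fails, a consistent gradient discretisation can converge to the wrong limit, so the step as you state it (``consistency can be applied to the finitely many fixed spatial profiles $\psi_j$'') would fail precisely for the nonconforming methods the theorem is designed to cover. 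This is the content of the paper's Lemma \ref{lem:estir}, which invokes the elliptic error estimate of \cite[Theorem 5.2]{DEGH18} to get $\Vert\agrad_\disc R_\disc\mathcal{I}_\disc u-\agrad Ru\Vert_\lpd\le C(\limconf_\disc(\agrad Ru)+\consist_\disc(Ru)+\consist_\disc(u))$; applied at a.e.\ $t$ with $u=w'(t)$ and integrated, it produces the term $\limcT_{\disc_n}(\agrad Rw')$, whose vanishing is supplied by part \eqref{eq:WT} itself, since $\wn\agrad Rw'(t)\in\hdiv$. Supplying this lemma (and the accompanying $O(k_n)$ time-averaging estimates for $w\in H^2(0,T;\wunp)$, as in the paper's Lemmas \ref{lem:interpinVex}--\ref{lem:cvschemeex}) would complete your argument; the rest of your bookkeeping, including the density reduction and the uniform-in-time term via \eqref{eq:embWinH}, is sound.
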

\begin{proof}
For a.e.\ $t\in (0,T)$ and all $v\in V_{h_n}$ we have
\[
\left|\langle \bvarphi(t),\agrad_{h_n} v(t)\rangle_{\lpd}+\langle\adiv\bvarphi(t),\api_{h_n} v(t)\rangle_\lp\right|
\le \limconf_{h_n}(\bvarphi(t))\|v(t)\|_{h_n}.
\]
Recalling that $\|v(t)\|_{h_n}=\|\wn^{\nicefrac12}\agrad_{h_n}v(t)\|_{\lpd}$, integrating over $t\in (0,T)$ and using the Cauchy--Schwarz inequality yields
\begin{multline*}
\left|\langle \bvarphi,\agrad_{h_n} v\rangle_{L^2(\lpd)}+\langle\adiv\bvarphi,\api_{h_n} v\rangle_{L^2(\lp)}\right|\\
\le \left(\int_0^T \limconf_{h_n}(\bvarphi(t))^2\,dt\right)^{\nicefrac12}\|\wn^{\nicefrac12}\agrad_{h_n}v\|_{L^2(\lpd)}
\end{multline*}
and thus
\[
\limcT_{\disc_n}(\bvarphi)\le \left(\int_0^T \limconf_{h_n}(\bvarphi(t))^2\,dt\right)^{\nicefrac12}.
\]
By limit-conformity we know that, for a.e.\ $t\in (0,T)$, $\limconf_{h_n}(\bvarphi(t))\to 0$ as $n\to\infty$.
Since we also have $\limconf_{\disc_n}(\bvarphi(t)) \le \Vert \bvarphi(t)\Vert_{\hdiv} (\|\wn^{-\nicefrac12}\|+ \widehat{C})$,
we can apply the dominated convergence theorem to obtain \eqref{eq:WT}.

 \medskip
 
Let us now turn to the proof of \eqref{eq:ST}.
Let  $\overline{w}\in H^2(0,T;\wunp)$. We prove in Lemma \ref{lem:errconswreg} that
\[
 \lim_{n\to\infty} \inf_{v\in W_{\disc_n}} \disT_{\disc_n}(\overline{w},v) = 0.
 \]
 The conclusion follows by density of $H^2(0,T;\wunp)$ in $W$, and the property
 \[
 \disT_{\disc_n}(w,v)\le \disT_{\disc_n}(\overline{w},v)+
 \left(\|w-\overline{w}\|_\lp^2 + \|\agrad w-\agrad \overline{w}\|_{\lpd}^2\right)^{\frac12},
 \]
 valid for any $w,\overline{w}\in W$. 
 
 \medskip
 
 Finally, \eqref{eq:cvscheme} is a consequence of \eqref{eq:WT}, \eqref{eq:ST} and Theorem \ref{thm:errest}.
\end{proof}

The next lemmas are steps for the proof of the final lemma of this section, Lemma \ref{lem:errconswreg}.
 
In the following, for legibility reasons, we sometimes drop the index $n$ in $\disc_n$.
Recalling the definition \eqref{def:sigmah} of $\consist_\disc$, we set $\conssT_{\disc}: L^2(0,T;\wunp)\to [0,+\infty)$  as
 \begin{equation*}
\conssT_{\disc}(v) :=  \Vert \consist_\disc(v(\cdot))\Vert_{L^2((0,T))}\mbox{ for all }v\in L^2(0,T;\wunp).
 \end{equation*}
Note that $\conssT_\disc(v)=\Vert \inf_{w\in X_\disc} \delta_{\disc}(v(\cdot),w)\Vert_{L^2((0,T))}$ is not equivalent to $\inf_{w\in W_\disc} \disT_{\disc}(v,w)$ (in particular, it does not include a term equivalent to $\sup_{[t\in[0,T]}\Vert v(t) - w(t)\Vert_\lp$).
 
 We have the following lemma
\begin{lem}\label{lem:consistT}
For any $\varphi\in L^2(0,T;\wunp)$,
\begin{equation}\label{eq:consistT}
 \lim_{n\to\infty} \conssT_{\disc_n}(\varphi) = 0.
\end{equation}
\end{lem}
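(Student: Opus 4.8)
The goal is to show $\lim_{n\to\infty}\conssT_{\disc_n}(\varphi)=0$ for every $\varphi\in L^2(0,T;\wunp)$, where $\conssT_{\disc_n}(\varphi)=\Vert\consist_{\disc_n}(\varphi(\cdot))\Vert_{L^2(0,T)}$ and $\consist_{\disc_n}(\psi)=\inf_{v\in X_{\disc_n}}\delta_{\disc_n}(\psi,v)$ measures the spatial interpolation error at a fixed element $\psi\in\wunp$. The consistency hypothesis \eqref{abs:strongconsistn} gives exactly the pointwise (in the space variable) statement $\consist_{\disc_n}(\psi)\to 0$ for each fixed $\psi\in\wunp$; the task is to upgrade this to an $L^2$-in-time convergence of $t\mapsto\consist_{\disc_n}(\varphi(t))$.

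**The plan.** I would reach for the dominated convergence theorem in time, mirroring exactly the structure used a few lines earlier in the proof of \eqref{eq:WT}. There are two ingredients to supply. First, for a.e. $t\in(0,T)$ the integrand $\consist_{\disc_n}(\varphi(t))^2$ tends to $0$ as $n\to\infty$: since $\varphi\in L^2(0,T;\wunp)$ we have $\varphi(t)\in\wunp$ for a.e.\ $t$, and then \eqref{abs:strongconsistn} applied to the fixed element $\varphi(t)\in\wunp$ gives $\consist_{\disc_n}(\varphi(t))\to 0$. Second, I need a domination bound independent of $n$. Taking $v=0$ in the infimum defining $\consist_{\disc_n}$ yields the clean estimate
\[
\consist_{\disc_n}(\varphi(t))\le\delta_{\disc_n}(\varphi(t),0)=\Bigl(\norm{\varphi(t)}{\lp}^2+\norm{\agrad\varphi(t)}{\lpd}^2\Bigr)^{\nicefrac12}=\norm{\varphi(t)}{\wunp,\mathcal G}.
\]
Because $\varphi\in L^2(0,T;\wunp)$, the right-hand side is an $L^2(0,T)$ function (using the Poincaré inequality \eqref{eq:poincare} to control the full graph norm by $\norm{\agrad\varphi(t)}{\lpd}$ if one prefers the norm $\norm{\cdot}{\wunp}$), so it serves as an $n$-independent square-integrable dominating function.

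**Conclusion of the argument.** With the pointwise convergence and the uniform domination in hand, the dominated convergence theorem gives
\[
\lim_{n\to\infty}\conssT_{\disc_n}(\varphi)^2=\lim_{n\to\infty}\int_0^T\consist_{\disc_n}(\varphi(t))^2\,{\rm d}t=0,
\]
which is \eqref{eq:consistT}. A minor technical point worth checking is the measurability in $t$ of $t\mapsto\consist_{\disc_n}(\varphi(t))$ for each fixed $n$: since $X_{\disc_n}$ is finite dimensional one can write $\consist_{\disc_n}(\varphi(t))$ as an infimum (in fact a minimum, attained by orthogonal projection onto the closed finite-dimensional subspace) of continuous functions of $\varphi(t)$, and this composes measurably with the strongly measurable map $t\mapsto\varphi(t)$.

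**Expected main obstacle.** None of the steps is genuinely hard; the whole content is recognizing that the problem is a routine application of dominated convergence, with consistency supplying the pointwise limit and the trivial choice $v=0$ supplying the dominating function. The only place demanding a little care is the measurability of the integrand, and even there the finite-dimensionality of $X_{\disc_n}$ makes it straightforward. This lemma is clearly designed as a clean warm-up to the harder interpolation estimate in Lemma~\ref{lem:errconswreg}, where the genuinely delicate work (controlling the supremum-in-time term and the discrete-derivative term) takes place.
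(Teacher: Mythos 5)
Your proof is correct and is essentially identical to the paper's: pointwise-in-time convergence from the consistency hypothesis \eqref{abs:strongconsistn}, the dominating function $(1+C_P)\Vert\varphi(t)\Vert_{\wunp}$ obtained by taking $v=0$ in the infimum together with the Poincar\'e inequality \eqref{eq:poincare}, and the dominated convergence theorem. Your extra remark on the measurability of $t\mapsto\consist_{\disc_n}(\varphi(t))$ is a legitimate point the paper leaves implicit, and your resolution of it is fine.
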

\begin{proof}
By consistency \eqref{abs:strongconsistn}, for a.e.\ $t\in (0,T)$ we have $\consist_{\disc_n}(\varphi(t))\to 0$ as $n\to\infty$.
Since $0\in X_\disc$ and $\Vert \varphi(t)\Vert_L\le C_P \Vert \agrad \varphi(t)\Vert_{\lpd}$, we also have
$\conssT_{\disc_n}(\varphi(t)) \le (1+C_P)\Vert \varphi(t)\Vert_{\wunp}$. The dominated convergence theorem then concludes the proof of \eqref{eq:consistT}.
\end{proof}

The interpolator $\mathcal{I}_\disc : \wunp\to X_\disc$ is the linear map defined by
\[
\forall u\in\wunp,\ \mathcal{I}_\disc u = \mbox{ argmin}\{\Vert \api_\disc v -  u\Vert_{\lp}^2+\Vert \agrad_\disc v - \agrad u\Vert_{\lpd}^2\,:\,v\in X_\disc\}.
\]
Since $\mathcal I_\disc u$ is the solution of an unconstrained quadratic minimisation problem, we have
\begin{multline*}
\forall u\in\wunp,\ \forall v\in X_\disc,\\
 \langle \api_\disc \mathcal{I}_\disc u,\api_\disc v\rangle_\lp +\langle \agrad_\disc \mathcal{I}_\disc u,\agrad_\disc v\rangle_\lpd = \langle  u,\api_\disc v\rangle_\lp+\langle \agrad u,\agrad_\disc v\rangle_\lpd.
\end{multline*}
Selecting $v=\mathcal I_\disc u$ and using \eqref{eq:poincare} and \eqref{abs:defcoercivityn}, we deduce the bound
\begin{equation}\label{eq:proj.ineq}
\|\agrad_\disc\mathcal I_\disc u\|_\lpd\le (C_P\widehat{C}+1)\|\agrad u\|_\lpd.
\end{equation}
We also define an interpolator for space-time functions: if $w\in C([0,T];\wunp)$, the element $\mathcal I_{\disc,k} w\in W_\disc$ is defined by the relations  \eqref{eq:schememtheta} using the family $(w^{m})_{m=0,\ldots,N}=(\mathcal I_\disc w(mk))_{m=0,\ldots,N}$.
 We then have the following lemma.
 
 \begin{lem}\label{lem:interpinVex} 
 For all $w\in H^1(0,T;\wunp)$ we have
 \[
 \conssT_{\disc}(w) \le \Vert \delta_\disc(w(\cdot),\mathcal I_{\disc,k} w (\cdot))\Vert_{L^2((0,T))} \le \conssT_{\disc}(w) + \ctel{cte:un} k  \Vert w'\Vert_{L^2(\wunp)}.
 \]
 \end{lem}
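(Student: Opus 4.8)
The plan is to prove the two inequalities separately. The left one is essentially a tautology, while the right one rests on the stability of the interpolator $\mathcal I_\disc$ combined with a standard time-increment estimate coming from the $H^1$-in-time regularity of $w$.

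For the lower bound, I would simply note that for a.e.\ $t\in(0,T)$ the element $\mathcal I_{\disc,k}w(t)\in X_\disc$ is one admissible competitor in the infimum defining $\consist_\disc(w(t))$ in \eqref{def:sigmah}; hence $\consist_\disc(w(t))\le \delta_\disc(w(t),\mathcal I_{\disc,k}w(t))$ pointwise, and taking the $L^2((0,T))$-norm yields the first inequality directly.

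For the upper bound, the key observation is that, by definition of $\mathcal I_\disc$ as a minimiser, one has $\consist_\disc(w(t))=\delta_\disc(w(t),\mathcal I_\disc w(t))$ for every $t$, whereas on each interval the space-time interpolant is frozen at the nodal value, $\mathcal I_{\disc,k}w(t)=\mathcal I_\disc w(m(t)k)$, where $m(t)$ is the index with $t\in((m(t)-1)k,m(t)k]$. Viewing $\delta_\disc(\varphi,v)$ as the distance in $\lp\times\lpd$ between $(\varphi,\agrad\varphi)$ and $(\api_\disc v,\agrad_\disc v)$, I would apply the triangle inequality to compare the frozen interpolant with the ``exact'' one, using the linearity of $\mathcal I_\disc$:
\[
\delta_\disc(w(t),\mathcal I_\disc w(m(t)k))\le \consist_\disc(w(t)) + \Big(\norm{\api_\disc \mathcal I_\disc(w(m(t)k)-w(t))}{\lp}^2+\norm{\agrad_\disc \mathcal I_\disc(w(m(t)k)-w(t))}{\lpd}^2\Big)^{\nicefrac12}.
\]
The interpolation-error term is then controlled solely by the $\wunp$-increment of $w$: the gradient part via the stability estimate \eqref{eq:proj.ineq}, and the reconstruction part via \eqref{abs:defcoercivityn}, so that the whole term is bounded by a constant (depending only on $\widehat C$ and $C_P$) times $\norm{\agrad(w(m(t)k)-w(t))}{\lpd}=\norm{w(m(t)k)-w(t)}{\wunp}$.

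It then remains to pass to $L^2$ in time. Writing $w(m(t)k)-w(t)=\int_t^{m(t)k}w'(s)\,\d s$ in $\wunp$ (legitimate since $w\in H^1(0,T;\wunp)$) and applying Cauchy--Schwarz gives $\norm{w(m(t)k)-w(t)}{\wunp}^2\le k\int_{(m-1)k}^{mk}\norm{w'(s)}{\wunp}^2\,\d s$ on the interval $((m-1)k,mk)$; integrating in $t$ over that interval of length $k$ and summing over $m=1,\dots,N$ gives $\int_0^T\norm{w(m(t)k)-w(t)}{\wunp}^2\,\d t\le k^2\norm{w'}{L^2(\wunp)}^2$. Taking the $L^2((0,T))$-norm of the pointwise bound above and using the triangle inequality in $L^2$ then produces $\Vert\delta_\disc(w(\cdot),\mathcal I_{\disc,k}w(\cdot))\Vert_{L^2((0,T))}\le\conssT_\disc(w)+\cter{cte:un}k\norm{w'}{L^2(\wunp)}$, which is the claim. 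I expect no genuine obstacle: the only delicate point is keeping the constant $\cter{cte:un}$ uniform in $n$, and this is exactly what the $n$-independent bounds \eqref{eq:proj.ineq} and \eqref{abs:defcoercivityn} supply.
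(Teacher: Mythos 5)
Your proposal is correct and follows essentially the same route as the paper: both split $\delta_\disc(w(t),\mathcal I_{\disc,k}w(t))$ by the triangle inequality into the exact-interpolant term (which equals $\consist_\disc(w(t))$ since $\mathcal I_\disc$ is the minimiser) plus the discrete increment $\mathcal I_\disc w(m(t)k)-\mathcal I_\disc w(t)$, control the $\api_\disc$ part by the $\agrad_\disc$ part via \eqref{abs:defcoercivityn}, and bound the increment by $k\Vert w'\Vert_{L^2(\wunp)}$ using the stability estimate \eqref{eq:proj.ineq} and Cauchy--Schwarz on $\int_t^{m(t)k}w'(s)\,\d s$. The only cosmetic difference is that the paper commutes $\agrad_\disc\mathcal I_\disc$ with the time integral before invoking \eqref{eq:proj.ineq}, whereas you apply the stability bound to the $\wunp$-increment directly; the two computations are equivalent.
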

 \begin{proof}
Recalling the definition \eqref{def:sigmah} of $\delta_\disc$ and using triangle inequalities, we have
\begin{multline}
 \Vert \delta_\disc(w(\cdot),\mathcal I_{\disc,k}w (\cdot))\Vert_{L^2((0,T))} \le \Vert \delta_\disc(w(\cdot),\mathcal{I}_\disc w(\cdot)))\Vert_{L^2((0,T))} \\ +
 \Bigl(\int_0^T \big(\norm{\api_\disc \mathcal I_{\disc,k}w (t) - \api_\disc \mathcal{I}_\disc w(t)}{\lp }^2 + \norm{\agrad_\disc \mathcal I_{\disc,k}w (t)-\agrad_\disc\mathcal{I}_\disc w(t)}{\lpd}^2\big){\rm d}t\Bigr)^{\nicefrac12}\\
 \le \conssT_{\disc}(w)+(\widehat{C}+1)^{\nicefrac12}  \Bigl(\int_0^T \norm{\agrad_\disc \mathcal I_{\disc,k}w (t)-\agrad_\disc\mathcal{I}_\disc w(t)}{\lpd}^2{\rm d}t\Bigr)^{\nicefrac12}.
\label{eq:equiv.1}\end{multline}
For all $m=0,\ldots,N-1$ and for a.e. $t\in((m-1)k,m k)$, it holds
\begin{multline}\label{eq:equiv.loin}
 \norm{\agrad_\disc  \mathcal I_{\disc,k}w (t)-\agrad_\disc\mathcal{I}_\disc w(t)}{\lpd} = \norm{\agrad_\disc\mathcal{I}_\disc  w(mk)-\agrad_\disc\mathcal{I}_\disc w(t)}{\lpd}\\
 =  \norm{\int_{t}^{mk}\agrad_\disc\mathcal{I}_\disc  w'(s){\rm d}s}{\lpd} \le  \int_{t}^{mk}\norm{\agrad_\disc\mathcal{I}_\disc  w'(s)}{\lpd}{\rm d}s.
\end{multline}
This yields, owing to the Cauchy-Schwarz inequality,
\[
 \norm{\agrad_\disc \mathcal I_{\disc,k}w (t)-\agrad_\disc\mathcal{I}_\disc w(t)}{\lpd}^2 \le k \int_{mk}^{(m+1)k} \norm{\agrad_\disc\mathcal{I}_\disc  w'(s)}{\lpd}^2{\rm d}s,
\]
and therefore
\[
  \int_{mk}^{(m+1)k}\norm{\agrad_\disc \mathcal I_{\disc,k}w (t)-\agrad_\disc\mathcal{I}_\disc w(t)}{\lpd}^2{\rm d}t  \le k^2 \int_{mk}^{(m+1)k} \norm{\agrad_\disc\mathcal{I}_\disc  w'(s)}{\lpd}^2{\rm d}s.
\]
Invoking the projection inequality \eqref{eq:proj.ineq} we can write $\norm{\agrad_\disc\mathcal{I}_\disc  w'(s)}{\lpd}\le (C_P\widehat{C}+1)\norm{\agrad  w'(s)}{\lpd}$. Plugging this into the relation \eqref{eq:equiv.1} concludes the proof of the lemma.
 \end{proof}

 \begin{lem}\label{lem:estir}  
For all $u\in \wunp$, recalling the definitions \eqref{eq:defR} and \eqref{eq:defRdisc} of the continuous and discrete Riesz operators, we have
 \begin{equation*}
  \Vert \agrad_\disc R_\disc \mathcal{I}_\disc u - \agrad R u\Vert_\lpd \le \ctel{cte:deux} (\limconf_\disc(\agrad Ru) + \consist_\disc(R u) + \consist_\disc(u) ).
 \end{equation*}
 \end{lem}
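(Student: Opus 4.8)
The plan is to introduce $\phi := Ru \in \wunp$ together with its interpolant $\mathcal{I}_\disc\phi$ as an intermediary, and to split
\[
\agrad_\disc R_\disc \mathcal{I}_\disc u - \agrad Ru = \agrad_\disc\big(R_\disc \mathcal{I}_\disc u - \mathcal{I}_\disc \phi\big) + \big(\agrad_\disc \mathcal{I}_\disc \phi - \agrad \phi\big).
\]
The second summand is immediately harmless: since $\mathcal{I}_\disc\phi$ realises the infimum defining $\consist_\disc(\phi)$ in \eqref{def:sigmah}, one has $\Vert \agrad_\disc \mathcal{I}_\disc \phi - \agrad \phi\Vert_\lpd \le \delta_\disc(\phi, \mathcal{I}_\disc \phi) = \consist_\disc(Ru)$. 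It thus remains to bound the purely discrete error $e := R_\disc \mathcal{I}_\disc u - \mathcal{I}_\disc \phi \in X_\disc$, which I would measure in the weighted norm $\Vert\cdot\Vert_\disc$ and then convert back via $\Vert \agrad_\disc e\Vert_\lpd \le \Vert \wn^{-\nicefrac12}\Vert\,\Vert e\Vert_\disc$.

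To estimate $e$, I would test the discrete Riesz identity \eqref{eq:defRdisc} against $e$ itself, which by positivity of $\wn$ gives $\Vert e\Vert_\disc^2 = \langle \wn \agrad_\disc e, \agrad_\disc e\rangle_\lpd = \langle \api_\disc \mathcal{I}_\disc u, \api_\disc e\rangle_\lp - \langle \wn \agrad_\disc \mathcal{I}_\disc \phi, \agrad_\disc e\rangle_\lpd$. The key preliminary remark is that, by \eqref{eq:defR}, $\phi=Ru$ satisfies $\langle \wn \agrad \phi, \agrad w\rangle_\lpd = \langle u, w\rangle_\lp$ for all $w \in \wunp$, so that the flux $\wn \agrad Ru$ belongs to $\hdiv$ with $\adiv(\wn \agrad Ru) = -u$ in the sense of \eqref{abs:stokes-formula}. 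Inserting $\pm\langle \wn \agrad \phi, \agrad_\disc e\rangle_\lpd$, I would then regroup as
\[
\Vert e\Vert_\disc^2 = \Big(\langle \api_\disc \mathcal{I}_\disc u, \api_\disc e\rangle_\lp - \langle \wn \agrad \phi, \agrad_\disc e\rangle_\lpd\Big) + \langle \wn(\agrad \phi - \agrad_\disc \mathcal{I}_\disc \phi), \agrad_\disc e\rangle_\lpd.
\]

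The last pairing is controlled by Cauchy--Schwarz and the bound on $\Vert \agrad_\disc \mathcal{I}_\disc \phi - \agrad \phi\Vert_\lpd$ obtained in the first paragraph, contributing at most $\Vert \wn^{\nicefrac12}\Vert\,\consist_\disc(Ru)\,\Vert e\Vert_\disc$. The bracketed difference is where limit-conformity enters: using $\adiv(\wn \agrad Ru) = -u$ and the definition \eqref{abs:defwdiscn} of $\limconf_\disc$ applied to $\wn \agrad Ru\in\hdiv$ and tested against $e$, I obtain $|\langle \wn \agrad \phi, \agrad_\disc e\rangle_\lpd - \langle u, \api_\disc e\rangle_\lp| \le \limconf_\disc(\wn \agrad Ru)\,\Vert e\Vert_\disc$, so the bracket reduces, up to this remainder, to $\langle \api_\disc \mathcal{I}_\disc u - u, \api_\disc e\rangle_\lp$. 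This last term I would bound by $\Vert \api_\disc \mathcal{I}_\disc u - u\Vert_\lp\,\Vert \api_\disc e\Vert_\lp \le \consist_\disc(u)\,\widehat{C}\Vert \wn^{-\nicefrac12}\Vert\,\Vert e\Vert_\disc$, using that $\mathcal{I}_\disc u$ realises $\consist_\disc(u)$ and the bound \eqref{abs:defcoercivityn}. Collecting the three contributions yields $\Vert e\Vert_\disc^2 \le C\big(\limconf_\disc(\wn \agrad Ru) + \consist_\disc(Ru) + \consist_\disc(u)\big)\Vert e\Vert_\disc$; dividing by $\Vert e\Vert_\disc$ and combining with the first paragraph closes the estimate.

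The one genuinely delicate point is the identification $\wn \agrad Ru \in \hdiv$ with $\adiv(\wn \agrad Ru) = -u$: this is precisely what allows $\limconf_\disc$ to measure the mismatch between the gradient pairing $\langle \wn \agrad Ru, \agrad_\disc e\rangle_\lpd$ and the reconstruction pairing $\langle u, \api_\disc e\rangle_\lp$, and it is the only place where the continuous and discrete Riesz operators are genuinely coupled. Everything else is a routine chain of Cauchy--Schwarz inequalities together with the \emph{argmin} characterisations of $\mathcal{I}_\disc$, and the resulting constant depends only on $\widehat{C}$ and $\wn$, as required.
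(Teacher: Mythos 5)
Your proof is correct, but it is organised differently from the paper's. The paper pivots through the auxiliary element $v_1\in X_\disc$ defined as the solution of the gradient scheme $\langle \wn\agrad_\disc v_1,\agrad_\disc z\rangle_\lpd=\langle u,\api_\disc z\rangle_\lp$ for all $z\in X_\disc$: it bounds $\Vert \agrad_\disc v_1-\agrad Ru\Vert_\lpd$ by citing the GDM elliptic error estimate (Theorem 5.2 of the reference \cite{DEGH18}, a Strang-type result yielding exactly $\limconf_\disc+\consist_\disc(Ru)$), and then compares $v_1$ with $R_\disc\mathcal I_\disc u$ by subtracting the two discrete variational identities and testing with $z=v_1-R_\disc\mathcal I_\disc u$, which isolates the data perturbation $u\mapsto \api_\disc\mathcal I_\disc u$ and produces the $\consist_\disc(u)$ term. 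You instead pivot through the interpolant $\mathcal I_\disc Ru$ and re-derive the elliptic estimate from scratch in a single energy inequality, in which the conformity defect (via \eqref{abs:defwdiscn} applied to the flux), the interpolation defect, and the data perturbation all appear at once. The two routes contain the same mathematical substance --- your second and third paragraphs essentially inline the proof of the cited theorem combined with a discrete stability step --- but yours is self-contained where the paper's is shorter by citation, and your bookkeeping of where $\widehat C$ and $\Vert\wn^{\pm\nicefrac12}\Vert$ enter is entirely consistent with the paper's convention on constants.

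One point where you are actually more careful than the statement you were asked to prove: you identify the relevant flux as $\wn\agrad Ru\in\hdiv$ with $\adiv(\wn\agrad Ru)=-u$ (which follows correctly from \eqref{eq:defR}, \eqref{abs:defhdiv} and \eqref{abs:stokes-formula}), and accordingly obtain the bound with $\limconf_\disc(\wn\agrad Ru)$. The lemma writes $\limconf_\disc(\agrad Ru)$, but since $\agrad Ru$ itself need not belong to $\hdiv$ (only $\wn\agrad Ru$ has a divergence in general), the paper's notation must be read as the conformity defect of the actual flux, which is precisely what you proved; when $\wn=\Id$ the two coincide. This is a notational discrepancy in the statement, not a gap in your argument.
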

 \begin{proof}
 Let $v_1\in X_\disc$ be such that
 \[
  \forall z\in X_\disc,\ \langle \wn \agrad_\disc v_1,\agrad_\disc z\rangle_\lpd = \langle u,\api_\disc z\rangle_\lp.
 \]
 By definition \eqref{eq:defR} of $Ru$, we note that $v_1$ is the solution of the gradient scheme for the linear problem satisfied by $Ru$; hence, we have the following error estimate \cite[Theorem 5.2]{DEGH18}:
 \begin{equation}\label{eq:approx.v1.Ru}
  \Vert \agrad_\disc v_1 - \agrad Ru\Vert_\lpd \le \ctel{cte:errestspace}(\limconf_\disc(\agrad Ru) + \consist_\disc(Ru)).
 \end{equation}
 Recall that $R_\disc\mathcal I_\disc u$ satisfies, by definition of $R_\disc$,
 \[
  \forall z\in X_\disc,\ \langle \wn \agrad_\disc (R_\disc\mathcal I_\disc u),\agrad_\disc z\rangle_\lpd = \langle \api_\disc \mathcal{I}_\disc u,\api_\disc z\rangle_\lp.
 \]
 Subtracting the equations satisfied by $v_1$ and $R_\disc\mathcal I_\disc u$, taking $z=v_1-R_\disc\mathcal I_\disc u$ and using the Cauchy--Schwarz inequality together with \eqref{abs:defcoercivityn}, we obtain
 \[
  \Vert \agrad_\disc(v_1 -  R_\disc\mathcal I_\disc u )\Vert_\lpd \le \ctel{cst:new.1} \Vert u - \api_\disc \mathcal{I}_\disc u\Vert_\lp \le \cter{cst:new.1} \consist_\disc(u).
 \]
 Combined with \eqref{eq:approx.v1.Ru}, this concludes the proof.
 \end{proof}

 \begin{lem}\label{lem:cvschemeex}  
For all $w\in H^2(0,T;\wunp)$, it holds
 \begin{multline*}
  \Vert \agrad R w' - \agrad_\disc  R_\disc\partial \mathcal I_{\disc,k}w \Vert_{L^2(\lpd)}  \\
  \le  k \Vert \agrad R w''\Vert_{L^2(\lpd)} + \cter{cte:deux} \left(\limcT_\disc(\agrad R w') + \conssT_{\disc}(R w') + \conssT_{\disc}(w') \right).
 \end{multline*}
 \end{lem}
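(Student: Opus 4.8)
The plan is to insert the piecewise-constant-in-time average of $w'$ and to split the error into a time part and a space part. Write $\overline{w}'$ for the function of $L^2(0,T;\wunp)$ equal to $\overline{w}'_m:=\frac{1}{k}\int_{(m-1)k}^{mk}w'(s){\rm d}s$ on each $((m-1)k,mk)$. Since $\mathcal{I}_\disc$ is linear and $w(mk)-w((m-1)k)=k\,\overline{w}'_m$, the discrete derivative of the interpolate is itself an interpolate, $\partial\mathcal{I}_{\disc,k}w=\mathcal{I}_\disc\overline{w}'$; and because $R_\disc$ and $\agrad_\disc$ act only in space, $\agrad_\disc R_\disc\partial\mathcal{I}_{\disc,k}w=\agrad_\disc R_\disc\mathcal{I}_\disc\overline{w}'$. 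Noting that $R$ and $\agrad$ commute with time-averaging, so that $\agrad R\overline{w}'$ is exactly the time-average of $\agrad Rw'$, a triangle inequality gives
\[
\Vert\agrad Rw'-\agrad_\disc R_\disc\partial\mathcal{I}_{\disc,k}w\Vert_{L^2(\lpd)}\le
\Vert\agrad Rw'-\agrad R\overline{w}'\Vert_{L^2(\lpd)}
+\Vert\agrad R\overline{w}'-\agrad_\disc R_\disc\mathcal{I}_\disc\overline{w}'\Vert_{L^2(\lpd)}.
\]

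For the time part, $g:=\agrad Rw'$ belongs to $H^1(0,T;\lpd)$ with $g'=\agrad Rw''$ (here $w\in H^2(0,T;\wunp)$ and $R,\agrad$ are bounded and commute with $\partial_t$). On each interval, writing $g(t)-\overline{g}_m=\frac{1}{k}\int_{(m-1)k}^{mk}\int_s^t g'(\tau){\rm d}\tau\,{\rm d}s$ and applying Cauchy--Schwarz yields $\int_{(m-1)k}^{mk}\Vert g-\overline{g}_m\Vert_\lpd^2{\rm d}t\le k^2\int_{(m-1)k}^{mk}\Vert g'\Vert_\lpd^2{\rm d}t$; summing over $m$ bounds the first term by $k\,\Vert\agrad Rw''\Vert_{L^2(\lpd)}$, which is the first term of the claimed estimate.

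For the space part I apply Lemma \ref{lem:estir} \emph{pointwise in time} with $u=\overline{w}'(t)\in\wunp$ and integrate over $(0,T)$, obtaining
\[
\Vert\agrad R\overline{w}'-\agrad_\disc R_\disc\mathcal{I}_\disc\overline{w}'\Vert_{L^2(\lpd)}\le
\cter{cte:deux}\Big(\Vert\limconf_\disc(\agrad R\overline{w}'(\cdot))\Vert_{L^2(0,T)}+\Vert\consist_\disc(R\overline{w}'(\cdot))\Vert_{L^2(0,T)}+\Vert\consist_\disc(\overline{w}'(\cdot))\Vert_{L^2(0,T)}\Big).
\]
The two consistency terms are controlled by $\conssT_\disc(Rw')$ and $\conssT_\disc(w')$: indeed $\consist_\disc$ is a seminorm on $\wunp$ (it is the distance in $\lp\times\lpd$ from $(\varphi,\agrad\varphi)$ to the closed subspace $\{(\api_\disc v,\agrad_\disc v):v\in X_\disc\}$, hence a seminorm composed with a linear map), so Jensen's inequality applied to $\overline{w}'_m=\frac{1}{k}\int w'$ and $R\overline{w}'_m=\frac{1}{k}\int Rw'$, together with Cauchy--Schwarz in time, gives $\Vert\consist_\disc(R\overline{w}'(\cdot))\Vert_{L^2(0,T)}\le\conssT_\disc(Rw')$ and likewise for $w'$.

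The step I expect to be the main obstacle is the limit-conformity term, where I must show $\Vert\limconf_\disc(\agrad R\overline{w}'(\cdot))\Vert_{L^2(0,T)}\le\limcT_\disc(\agrad Rw')$, i.e. that the integrated pointwise conformity error of the averaged flux is controlled by the genuinely space-time quantity \eqref{abs:defwdiscV} evaluated at $w'$ itself. The key point is that the test functions defining $\limcT_\disc$ range over $V_\disc$ and are piecewise constant in time, so the numerator of \eqref{abs:defwdiscV} depends on its argument only through its time-averages; this already gives $\limcT_\disc(\agrad Rw')=\limcT_\disc(\agrad R\overline{w}')$. Moreover, a function of $V_\disc$ may be prescribed independently on each time slab, so optimising the test function separately on each $((m-1)k,mk)$ and using Cauchy--Schwarz in $m$ yields
\[
\limcT_\disc(\agrad R\overline{w}')=\Big(\sum_{m=1}^{N}k\,\limconf_\disc(\agrad R\overline{w}'_m)^2\Big)^{\nicefrac12}=\Vert\limconf_\disc(\agrad R\overline{w}'(\cdot))\Vert_{L^2(0,T)}.
\]
This identity is exactly what permits the estimate to be phrased with $\limcT_\disc$ rather than the a priori larger (but here equal) integrated pointwise quantity. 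Combining the three bounds of the space part with the time part concludes the proof.
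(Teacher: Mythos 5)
Your proof is correct and follows the same skeleton as the paper's: insert the piecewise-constant-in-time average $\overline{w'}$, bound the time part by $k\Vert\agrad Rw''\Vert_{L^2(\lpd)}$ via a Jensen/Cauchy--Schwarz argument on each slab, and treat the space part with Lemma \ref{lem:estir}. The one genuine difference is \emph{where} you apply Lemma \ref{lem:estir}: the paper applies it pointwise with $u=w'(t)$, squares and integrates, and then states the conclusion with $\limcT_\disc(\agrad Rw')$ --- which silently replaces the quantity actually produced, namely $\bigl(\int_0^T\limconf_\disc(\agrad Rw'(t))^2\,{\rm d}t\bigr)^{\nicefrac12}$, by $\limcT_\disc(\agrad Rw')$; this substitution goes the wrong way, since (as the proof of Theorem \ref{lem:WT} itself shows) one only has $\limcT_\disc(\bvarphi)\le\bigl(\int_0^T\limconf_\disc(\bvarphi(t))^2\,{\rm d}t\bigr)^{\nicefrac12}$ in general. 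Your variant --- applying Lemma \ref{lem:estir} to the averaged slices $u=\overline{w'}_m$ and proving the slab-by-slab duality identity $\limcT_\disc(\agrad R\overline{w'})=\bigl(\sum_{m}k\,\limconf_\disc(\agrad R\overline{w'}_m)^2\bigr)^{\nicefrac12}=\limcT_\disc(\agrad Rw')$, valid because test functions in $V_\disc$ are piecewise constant in time and can be optimised independently on each slab --- yields the lemma literally as stated, with the smaller space-time conformity error on the right-hand side. Your auxiliary steps are all sound: $\partial\mathcal I_{\disc,k}w=\mathcal I_\disc\overline{w'}$ by linearity of $\mathcal I_\disc$; $R$, $\agrad$ and $\adiv$ commute with Bochner time-averages; and, since $\consist_\disc$ is a continuous seminorm (distance to a closed subspace composed with a linear map), Jensen plus Cauchy--Schwarz in time gives $\Vert\consist_\disc(\overline{w'}(\cdot))\Vert_{L^2((0,T))}\le\conssT_\disc(w')$ and likewise for $Rw'$. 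In short, your route buys a fully rigorous proof of the statement exactly as written, whereas the paper's pointwise application, taken literally, proves the variant with the integrated pointwise conformity error --- still sufficient for the convergence results of Section \ref{sec:inter} via dominated convergence, but a priori weaker than the claimed bound.
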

 
 \begin{proof}
 Let $\overline{w'}_\disc$ be the function defined on $(0,T)$ by: for all $m=0,\ldots,N$ and $t\in((m-1)k,mk)$,
 \[
  \overline{w'}_\disc(t) = \frac 1 k\int_{(m-1)k}^{mk} w'(s){\rm d}s = \frac {w(mk) - w((m-1)k)} k.
 \]
 We have
 \begin{multline}
  \Vert \agrad R w' - \agrad_\disc R_\disc\partial \mathcal I_{\disc,k}w \Vert_{L^2(\lpd)} \\
  \le  \Vert \agrad R w' - \agrad R\overline{w'}_\disc\Vert_{L^2(\lpd)} + \Vert \agrad R\overline{w'}_\disc - \agrad_\disc  R_\disc\partial \mathcal I_{\disc,k}w \Vert_{L^2(\lpd)}.
\label{eq:est.H2.1}\end{multline}
We have
\[
  \Vert \agrad R w' - \agrad R\overline{w'}_\disc\Vert_{L^2(\lpd)}^2 = \sum_{m=0}^{N-1} \int_{mk}^{(m+1)k} \norm{ \agrad R w'(t) - \frac 1 k\int_{(m-1)k}^{mk}  \agrad R w'(s){\rm d}s}{\lpd}^2{\rm d}t.
\]
We have, using the Jensen inequality,
\begin{multline*}
 \int_{mk}^{(m+1)k} \norm{ \frac 1 k\int_{(m-1)k}^{mk} (\agrad R w'(t) -  \agrad R w'(s)){\rm d}s}{\lpd}^2{\rm d}t \\
 \le \int_{mk}^{(m+1)k}\int_{(m-1)k}^{mk} \frac 1 k\norm{ \agrad R w'(t) -   \agrad R w'(s)}{\lpd}^2{\rm d}s{\rm d}t,
\end{multline*}
and
\[
 \norm{ \agrad R w'(t) -   \agrad R w'(s)}{\lpd}^2 \le k\int_{mk}^{(m+1)k}\norm{ \agrad R w''(\tau)}{\lpd}^2{\rm d}\tau.
\]
This yields
\begin{equation}\label{eq:est.H2.2}
  \Vert \agrad R w' - \agrad R\overline{w'}_\disc\Vert_{L^2(\lpd)} \le k \Vert \agrad R w''\Vert_{L^2(\lpd)}.
\end{equation}
On the other hand, for a.e. $t\in((m-1)k,mk)$ and writing 
\[
\partial \mathcal I_{\disc,k}w(t)=\frac 1 k (\mathcal{I}_\disc w(mk) - \mathcal{I}_\disc w((m-1)k)) = \frac 1 k \int_{(m-1)k}^{mk}\mathcal{I}_\disc w'(s){\rm d}s,
\]
we have
\[
 \agrad R\overline{w'}_\disc(t) - \agrad_\disc R_\disc \partial \mathcal I_{\disc,k}w (t)  = \frac 1 k\int_{(m-1)k}^{mk} \Big(\agrad R w'(s) - \agrad_\disc R_\disc \mathcal{I}_\disc  w'(s)\Big){\rm d}s.
\]
This yields
\[
 \Vert \agrad \overline{w'}_\disc - \agrad_\disc  R_\disc\partial \mathcal I_{\disc,k}w \Vert_{L^2(\lpd)} \le \Vert \agrad R w' - \agrad_\disc  R_\disc\mathcal{I}_\disc  w'\Vert_{L^2(\lpd)}.
\]
Applying Lemma \ref{lem:estir} to $u=w'(t)$, squaring and integrating over $t\in (0,T)$, we infer
\[
 \Vert \agrad \overline{w'}_\disc - \agrad_\disc  R_\disc\partial \mathcal I_{\disc,k}w \Vert_{L^2(\lpd)} \le  \cter{cte:deux} (\limcT_\disc(\agrad R w') + \conssT_{\disc}(R w') + \conssT_{\disc}(w') ).
\]
The proof is concluded by combining this estimate, \eqref{eq:est.H2.1} and \eqref{eq:est.H2.2}.
  \end{proof}

 \begin{lem}\label{lem:interpinVpex}  For all $w\in H^2(0,T;\wunp)$, we have
 \begin{multline*}
 \sup_{t\in[0,T]}\Vert \api_\disc \mathcal I_{\disc,k}w (t) - w(t)\Vert_\lp \\
 \le  \ctel{cte:supt} \Big(k \left(\Vert w'\Vert_{L^2(\wunp)}+ \Vert w''\Vert_{L^2(\wunp)}\right)+ \conssT_{\disc}(w) + \conssT_{\disc}(w')\Big).
 \end{multline*}
 \end{lem}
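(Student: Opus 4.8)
The plan is to split the error, on each time interval, into a purely spatial interpolation error evaluated at the right endpoint and a time-reconstruction error, and then to bound each resulting pointwise-in-time quantity by the $L^2$-in-time consistency norms $\conssT_\disc$ via a Sobolev embedding in time. Fix $t\in(0,T]$ and let $m\in\{1,\dots,N\}$ be such that $t\in((m-1)k,mk]$, so that $\api_\disc\mathcal I_{\disc,k}w(t)=\api_\disc\mathcal I_\disc w(mk)$. Writing $\phi(s):=\api_\disc\mathcal I_\disc w(s)-w(s)$ for the pointwise-in-time spatial interpolation error, I would decompose
\[
\api_\disc\mathcal I_{\disc,k}w(t)-w(t)=\phi(mk)+\big(w(mk)-w(t)\big),
\]
the case $t=0$ reducing to $\phi(0)$ since $\mathcal I_{\disc,k}w(0)=\mathcal I_\disc w(0)$. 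It then suffices to bound $\sup_{0\le m\le N}\|\phi(mk)\|_\lp$ and $\sup_t\|w(mk)-w(t)\|_\lp$ separately.

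For the first term, the key observation is that $\api_\disc\mathcal I_\disc:\wunp\to\lp$ is a bounded linear operator (by \eqref{eq:proj.ineq} together with \eqref{abs:defcoercivityn}), so composing with $w\in H^2(0,T;\wunp)\subset H^1(0,T;\wunp)$ gives $\phi\in H^1(0,T;\lp)$ with $\phi'(s)=\api_\disc\mathcal I_\disc w'(s)-w'(s)$. The continuous embedding $H^1(0,T;\lp)\hookrightarrow C([0,T];\lp)$ then yields $\sup_{s\in[0,T]}\|\phi(s)\|_\lp\le C(\|\phi\|_{L^2(\lp)}+\|\phi'\|_{L^2(\lp)})$ with $C$ depending only on $T$. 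Finally, the definition \eqref{def:sigmah} of $\consist_\disc$ gives the pointwise bounds $\|\phi(s)\|_\lp\le\consist_\disc(w(s))$ and $\|\phi'(s)\|_\lp\le\consist_\disc(w'(s))$, whence $\|\phi\|_{L^2(\lp)}\le\conssT_\disc(w)$ and $\|\phi'\|_{L^2(\lp)}\le\conssT_\disc(w')$; this produces the two consistency contributions in the claimed bound.

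For the second term I would use \eqref{eq:poincare} to pass to the $\wunp$-norm, then write $w(mk)-w(t)=\int_t^{mk}w'(s)\,\d s$ and bound $\|w(mk)-w(t)\|_\wunp\le(mk-t)\sup_{s\in[0,T]}\|w'(s)\|_\wunp\le k\sup_s\|w'(s)\|_\wunp$. Since $w'\in H^1(0,T;\wunp)$, a second application of the embedding $H^1(0,T;\wunp)\hookrightarrow C([0,T];\wunp)$ bounds $\sup_s\|w'(s)\|_\wunp\le C(\|w'\|_{L^2(\wunp)}+\|w''\|_{L^2(\wunp)})$, which delivers exactly the term $k\big(\|w'\|_{L^2(\wunp)}+\|w''\|_{L^2(\wunp)}\big)$. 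Combining the two estimates and taking the supremum over $t\in[0,T]$ concludes.

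The point to get right, rather than a genuine obstacle, is the conversion of the pointwise-in-time quantities (the supremum over time, and in particular over the grid nodes $mk$) into the $L^2$-in-time consistency norms $\conssT_\disc$: this is precisely what the time Sobolev embedding provides, and it is also what forces the use of the full $H^2$ regularity. Indeed, using only $w'\in L^2(0,T;\wunp)$ would at best give an order $\sqrt k$ time-reconstruction error; it is the control of the pointwise supremum of $\|w'(s)\|_\wunp$ by $\|w''\|_{L^2(\wunp)}$ that upgrades this to the first-order term in $k$.
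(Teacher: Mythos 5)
Your argument is correct; note only that the pointwise bounds $\Vert\phi(s)\Vert_\lp\le\consist_\disc(w(s))$ and $\Vert\phi'(s)\Vert_\lp\le\consist_\disc(w'(s))$ rest on the fact that the infimum in \eqref{def:sigmah} is attained at $\mathcal{I}_\disc$ (the minimisation is strictly convex on the finite-dimensional $X_\disc$ since $\agrad_\disc$ defines a norm there), so that $\delta_\disc(u,\mathcal{I}_\disc u)=\consist_\disc(u)$ --- worth a line in a written-up version. Your route differs from the paper's in where the time increment is placed. The paper splits at time $t$ as $\Vert\api_\disc(\mathcal I_{\disc,k}w(t)-\mathcal{I}_\disc w(t))\Vert_\lp+\Vert\api_\disc\mathcal{I}_\disc w(t)-w(t)\Vert_\lp$ (see \eqref{eq:est.PhIh}): the time-reconstruction error is carried by the \emph{discrete} interpolant and estimated through the increment bound \eqref{eq:equiv.loin} (recycled from the proof of Lemma \ref{lem:interpinVex}), the stability \eqref{eq:proj.ineq} and the sup-bound \eqref{eq:poincmoy}; the spatial error at an arbitrary time $t$ is then controlled by averaging over $s\in(0,T)$ the identity $\api_\disc\mathcal{I}_\disc w(t)-w(t)=\api_\disc\mathcal{I}_\disc w(s)-w(s)+\int_s^t(\api_\disc\mathcal{I}_\disc w'(\tau)-w'(\tau))\,{\rm d}\tau$, which is exactly a hand-made proof of the embedding $H^1(0,T;\lp)\hookrightarrow C([0,T];\lp)$ that you invoke abstractly. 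You instead anchor the spatial error at the node $mk$ and put the time increment on the \emph{exact} solution, $w(mk)-w(t)=\int_t^{mk}w'(s)\,{\rm d}s$, paying only the Poincar\'e constant \eqref{eq:poincare} and a \eqref{eq:poincmoy}-type bound; this spares you \eqref{eq:equiv.loin} entirely, while \eqref{eq:proj.ineq} together with \eqref{abs:defcoercivityn} is still needed --- and correctly cited by you --- to justify that $\api_\disc\mathcal{I}_\disc$ is bounded from $\wunp$ to $\lp$, hence that $\phi\in H^1(0,T;\lp)$ with $\phi'=\api_\disc\mathcal{I}_\disc w'-w'$ (a bounded linear operator commutes with the Bochner weak derivative). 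Both proofs consume the same ingredients and yield the same dependencies of the constant ($\conssT_{\disc}(w)$, $\conssT_{\disc}(w')$, and the control of $\sup_t\Vert w'(t)\Vert_\wunp$ by $\Vert w'\Vert_{L^2(\wunp)}+\Vert w''\Vert_{L^2(\wunp)}$ for the $O(k)$ term); yours is marginally more economical in this lemma, whereas the paper's has the advantage of reusing an estimate already established for Lemma \ref{lem:interpinVex}.
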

 \begin{proof}
 Let us first establish a preliminary inequality. For $s,t\in[0,T]$,
\[
 w'(t) = w'(s) + \int_s^t w''(\tau){\rm d}\tau,
\]
which leads to 
\[
\Vert w'(t)\Vert_{\wunp} \le \Vert w'(s)\Vert_{\wunp} + \int_0^T  \Vert w''(\tau)\Vert_{\wunp}{\rm d}\tau.
\]
Integrating with respect to $s$ and using the Cauchy-Schwarz inequality, we obtain
\begin{equation}\label{eq:poincmoy}
 \sup_{t\in[0,T]} \Vert w'(t)\Vert_{\wunp}\le \frac 1 {\sqrt{T}} \Vert w'\Vert_{L^2(\wunp)}+ \sqrt{T}\Vert w''\Vert_{L^2(\wunp)}.
\end{equation}

 For all $t\in[0,T]$, we have
 \begin{multline}
  \Vert \api_\disc \mathcal I_{\disc,k}w (t) - w(t)\Vert_\lp \le \Vert \api_\disc \mathcal I_{\disc,k}w (t) -\api_\disc  \mathcal{I}_\disc w(t)\Vert_\lp + \Vert \api_\disc \mathcal{I}_\disc w(t) - w(t)\Vert_\lp \\
  \le \widehat{C} \Vert \agrad_\disc (\mathcal I_{\disc,k}w (t) - \mathcal{I}_\disc w(t))\Vert_{\lpd} + \Vert \api_\disc \mathcal{I}_\disc w(t) - w(t)\Vert_\lp.
\label{eq:est.PhIh}
 \end{multline}

The first term in the right-hand side can be bounded using \eqref{eq:equiv.loin}, \eqref{eq:proj.ineq} (with $u=w'(t)$) and  \eqref{eq:poincmoy} to write
\begin{multline}
 \Vert \agrad_\disc (\mathcal I_{\disc,k}w (t) - \mathcal{I}_\disc w(t))\Vert_{\lpd} \\
 \le (C_P\widehat{C}+1)k\left(\frac 1 {\sqrt{T}} \Vert w'\Vert_{L^2(\wunp)}+ \sqrt{T}\Vert w''\Vert_{L^2(\wunp)}\right).
\label{eq:est.PhIh.2}\end{multline}
To estimate the second term in the right-hand side of \eqref{eq:est.PhIh}, we write, for any $s\in (0,T)$,
\[
 \api_\disc  \mathcal{I}_\disc w(t) - w(t) =  \api_\disc  \mathcal{I}_\disc w(s) - w(s) + \int_s^t  (\api_\disc  \mathcal{I}_\disc w'(\tau) - w'(\tau)){\rm d}\tau.
\]
Integrating with respect to $s$ and using the Cauchy-Schwarz inequality, this yields
\[
  \Vert  \api_\disc  \mathcal{I}_\disc w(t) - w(t)\Vert_{\lp} \le \ctel{cte:depT}\left( \conssT_{\disc}(w)+\conssT_{\disc}(w')\right).
\]
Plugging this estimate together with \eqref{eq:est.PhIh.2} in  \eqref{eq:est.PhIh} concludes the proof.
\end{proof}

\begin{lem}\label{lem:errconswreg}
For all $w\in H^2(0,T;\wunp)$, it holds
\begin{multline}\label{eq:errestreg}
 \disT_{\disc}(w,\mathcal I_{\disc,k}w) \le  \ctel{cte:errestreg}\Big( k  \left(\Vert w'\Vert_{L^2(\wunp)}+ \Vert w''\Vert_{L^2(\wunp)}\right)\\
  +  \limcT_\disc(\agrad R w') + \conssT_{\disc}(R w') + \conssT_{\disc}(w') + \conssT_{\disc}(w) \Big).
\end{multline}
As a consequence,
\begin{equation}\label{eq:STreg}
 \lim_{n\to\infty} \inf_{v\in W_{\disc_n}} \disT_{\disc_n}(w,v) = 0.
\end{equation}
\end{lem}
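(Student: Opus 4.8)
The goal is to prove the bound \eqref{eq:errestreg} by estimating each of the three contributions to $\disT_\disc(w,\mathcal I_{\disc,k}w)$ in the definition \eqref{eq:def.Suv}, using the canonical interpolant $v = \mathcal I_{\disc,k}w$ as the competitor. These three terms are the Riesz--derivative term $\Vert\wn^{\nicefrac12}(\agrad Rw' - \agrad_\disc R_\disc\partial\mathcal I_{\disc,k}w)\Vert_{L^2(\lpd)}$, the diffusion term $\Vert\wn^{-\nicefrac12}\Lambda(\agrad w - \agrad_\disc\mathcal I_{\disc,k}w)\Vert_{L^2(\lpd)}$, and the supremum-in-time term $\max_{t\in[0,T]}\Vert w(t)-\api_\disc\mathcal I_{\disc,k}w(t)\Vert_\lp$. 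The plan is to bound each of them separately and then assemble. The point is that \emph{all the hard work has already been done} in the preceding lemmas of this section, so the first part of the proof is essentially a matter of invoking them and collecting constants.

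First I would handle the Riesz-derivative term: since $\wn^{\nicefrac12}$ is bounded (its norm depends only on $\wn$), this is controlled by $\Vert\agrad Rw' - \agrad_\disc R_\disc\partial\mathcal I_{\disc,k}w\Vert_{L^2(\lpd)}$, which is exactly the quantity estimated in Lemma~\ref{lem:cvschemeex}; it yields the $k\Vert\agrad Rw''\Vert_{L^2(\lpd)}$ term (absorbed into $k\Vert w''\Vert_{L^2(\wunp)}$ after using boundedness of $R$ and of $\agrad$ on $\wunp$) together with $\limcT_\disc(\agrad Rw') + \conssT_\disc(Rw') + \conssT_\disc(w')$. Second, the supremum-in-time term is precisely the content of Lemma~\ref{lem:interpinVpex}, producing the remaining $k(\Vert w'\Vert_{L^2(\wunp)}+\Vert w''\Vert_{L^2(\wunp)})$ and $\conssT_\disc(w)+\conssT_\disc(w')$ contributions. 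Third, for the diffusion term I would bound $\Vert\wn^{-\nicefrac12}\Lambda\Vert_{L^\infty}$ (which is $\le M$ by \eqref{eq:wnLambda.M}) and reduce to $\Vert\agrad w - \agrad_\disc\mathcal I_{\disc,k}w\Vert_{L^2(\lpd)}$; this in turn splits, via a triangle inequality, into $\Vert\agrad w - \agrad_\disc\mathcal I_\disc w\Vert_{L^2(\lpd)}$ (controlled by $\conssT_\disc(w)$ by definition of $\consist_\disc$ and Lemma~\ref{lem:consistT}) and the time-jump term $\Vert\agrad_\disc\mathcal I_{\disc,k}w - \agrad_\disc\mathcal I_\disc w\Vert_{L^2(\lpd)}$, which is bounded by $\ctel{cte:un}k\Vert w'\Vert_{L^2(\wunp)}$ thanks to Lemma~\ref{lem:interpinVex}. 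Summing the three estimates gives \eqref{eq:errestreg}.

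The passage to \eqref{eq:STreg} is then the routine limiting argument. Writing $\inf_{v\in W_{\disc_n}}\disT_{\disc_n}(w,v) \le \disT_{\disc_n}(w,\mathcal I_{\disc_n,k_n}w)$ and applying \eqref{eq:errestreg} with $\disc = \disc_n$ and $k = k_n\to 0$, every term on the right-hand side tends to $0$: the factors multiplying $k_n$ vanish because $\Vert w'\Vert_{L^2(\wunp)}$ and $\Vert w''\Vert_{L^2(\wunp)}$ are finite (as $w\in H^2(0,T;\wunp)$) while $k_n\to 0$; the limit-conformity term $\limcT_{\disc_n}(\agrad Rw')\to 0$ by \eqref{eq:WT} of Theorem~\ref{lem:WT} (noting $\agrad Rw' \in L^2(0,T;\hdiv)$, which follows from the definition of $R$ and $\hdiv$); and the three consistency terms $\conssT_{\disc_n}(Rw'), \conssT_{\disc_n}(w'), \conssT_{\disc_n}(w)\to 0$ by Lemma~\ref{lem:consistT}, since $Rw', w', w \in L^2(0,T;\wunp)$.

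I do not expect a genuine obstacle here, since the lemma is a clean assembly of previously established estimates; the only points requiring mild care are (i) checking that the weights $\wn^{\pm\nicefrac12}$ and $\Lambda$ can indeed be pulled out as bounded multiplicative factors, so that the weighted norms in \eqref{eq:def.Suv} reduce to the unweighted quantities handled in Lemmas~\ref{lem:cvschemeex}--\ref{lem:interpinVpex}, and (ii) correctly identifying which consistency and interpolation lemma controls each of the three terms, so that the final constant $\cter{cte:errestreg}$ depends only on the admissible quantities ($\widehat{C}$, $T$, $C_T$, $\Lambda$, $\wn$). The genuinely substantial estimates -- in particular the $O(k)$ time-discretisation bounds and the spatial error estimate of Lemma~\ref{lem:estir} invoked inside Lemma~\ref{lem:cvschemeex} -- are already in place.
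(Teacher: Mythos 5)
Your proposal is correct and takes essentially the same route as the paper's proof: the paper likewise assembles \eqref{eq:errestreg} from Lemmas \ref{lem:cvschemeex}, \ref{lem:interpinVex} and \ref{lem:interpinVpex}, converting $k\Vert \agrad R w''\Vert_{L^2(\lpd)}$ into $k\Vert w''\Vert_{L^2(\wunp)}$ via the bound $\Vert \agrad R u\Vert_{\lpd}\le C\Vert u\Vert_{\lp}\le C\,C_P\Vert u\Vert_{\wunp}$ (your boundedness-of-$R$ step), and then deduces \eqref{eq:STreg} from \eqref{eq:WT} and Lemma \ref{lem:consistT} with $k_n\to 0$. Your extra triangle-inequality split of the diffusion term is unnecessary but harmless, since Lemma \ref{lem:interpinVex} already controls the full $\delta_\disc$, which dominates the gradient part.
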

\begin{proof}
Recalling the definition \eqref{eq:def.Suv} of $\disT_{\disc}$, the estimate \eqref{eq:errestreg} is a consequence of Lemmas \ref{lem:cvschemeex}, \ref{lem:interpinVex} and \ref{lem:interpinVpex}, once we notice that, for all $u\in \wunp$,
\[
 \Vert \agrad R u \Vert_{\lpd}\le \ctel{cst:GR.cont} \Vert u\Vert_\lp \le  \cter{cst:GR.cont} C_P \Vert u\Vert_\wunp,
\]
the first inequality being obtained by selecting $\xi=v=u$ in \eqref{eq:defR}, while the second follows from \eqref{eq:poincare}.

The relation \eqref{eq:STreg} follows from Lemmas \ref{lem:WT} and \eqref{lem:consistT}.

\end{proof}

\section{Numerical illustration}\label{sec:num}

\subsection{Irregular initial data}

One of the key features of the error estimate in Theorem \ref{thm:errest} is that it does not require any regularity on the solution beyond the one provided by the model itself. Let us apply our error estimate to a case where the continuous solution of the problem is such that $u'\notin L^2(0,T;\lp)$.
Let $\Omega = (0,1)$, $\lp = \lpd = L^2(\Omega)$, $\agrad u = \partial_x u$, $\wunp = H^1_0(\Omega)$, $\Phi = 0$, $\Lambda = {\rm Id}$, $f=0$, $\bF=0$, $\xi_0(x) = 1$, $T=1/10$.
Then the solution of Problem \eqref{eq:pbcont} is given, for $t\in (0,T]$ and $x\in [0,1]$, by 
\[
 u(t)(x) = \sum_{p\in\mathbb{N}} \frac 4 {(2p+1)\pi} \exp(-((2p+1)\pi)^2 t) \sin((2p+1)\pi x).
\]
We define $(X_\disc,\api_\disc,\agrad_\disc)$, letting $k = 0.9 h^2$, from the Control Volume Finite Element gradient discretisation \cite[8.4 p274]{gdm}. It consists, for a given $M\in\mathbb{N}^\star$, in defining  $h = 1/(M+1)$, $X_\disc = \mathbb{R}^M$, and, for any $w:=(w_i)_{i=1,\ldots,M}$, letting $w_0 = w_{M+1} = 0$,
\[
 \api_\disc w(x)= w_i\hbox{ if }x\in \left((i-\frac 1 2)h,(i+\frac 1 2)h\right)\cap(0,1),\ i=0,\ldots,M+1,
\]
\[
 \agrad_\disc w(x)= \frac {w_{i+1} - w_i} {h}\hbox{ if }x\in (ih,(i+1)h),\ i=0,\ldots,M.
\]
Let $\varphi_i:[0,1]\to [0,1]$, for $i=1,\ldots,M$, be the $P^1$ finite element basis function: $\varphi_i(jh) = \delta_{ij}$ for all $j=0,\ldots,M+1$, and $\varphi_i$ continuous and piecewise affine. Then, setting
\[
 v := \sum_{i=1}^M w_i \varphi_i \in H^1_0(\Omega),
\]
we get
\[
 \agrad_\disc w = \agrad v.
\]
The advantage of this discretisation method is that is satisfies monotonicity properties, due to the fact that the mass matrix is lumped, accounting for the definition of $\api_\disc$. We show in Figure \ref{fig:sol} the exact solution at different times, and the approximate solution obtained by the scheme at the final time.
\begin{figure}[!ht]
	\centering
 \includegraphics[scale=0.6]{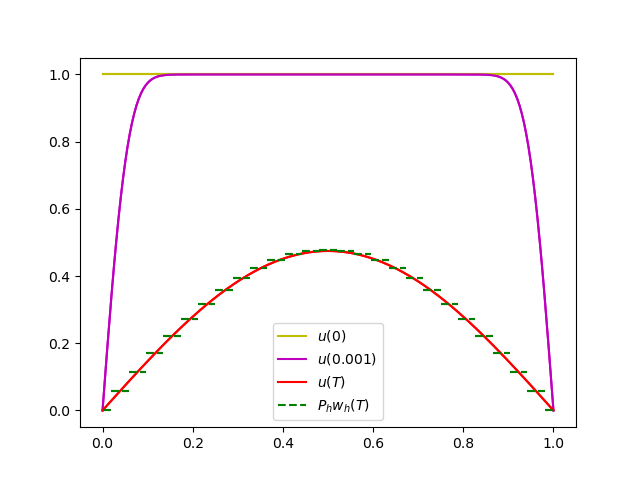}
 \caption{Exact solution at different times and approximate solution at the final time, irregular initial data} \label{fig:sol}
\end{figure}
In this case, the continuous solution does not satisfy $u'\in L^2(0,T;\lp)$, nor $u\in L^2(0,T;H^2(\Omega))$. Indeed, for any $T>0$, we have
\[
 \lim_{\varepsilon\to 0} \Vert u'\Vert_{L^2(\varepsilon,T;L^2(\Omega))} = \lim_{\varepsilon\to 0} \Vert \Delta u\Vert_{L^2(\varepsilon,T;L^2(\Omega))} = +\infty.
\]
This can be shown by noticing that
\[
  \Vert u'\Vert_{L^2(\varepsilon,T;L^2(\Omega))}^2 = \frac 1 2 (\Vert \agrad u(\varepsilon)\Vert_{\lpd}^2 - \Vert \agrad u(T)\Vert_{\lpd}^2). 
\]
If $ \Vert u'\Vert_{L^2(\varepsilon,T;L^2(\Omega))}$ were bounded as $\varepsilon\to 0$, so would be $\Vert \agrad u(\varepsilon)\Vert_{\lpd}$. Since $u(\varepsilon)\to \xi_0$ in $\lp$, this would imply that $\xi_0\in\wunp$, which does not hold.

\medskip

Computing the error terms involved in \eqref{eq:def.Suv} in Theorem \ref{thm:errest}, we remark that, since the right-hand-side vanishes,
\[
 \Vert \agrad Ru' - \agrad_\disc R_\disc\partial w\Vert_{L^2(\lpd)} = \Vert \agrad u - \agrad_\disc  w\Vert_{L^2(\lpd)}.
\]
It therefore suffices to evaluate 
\[
E_1 = \Vert \agrad u - \agrad_\disc  w\Vert_{L^2(\lpd)},\ E_2 = \max_{t\in[0,T]}\Vert u(t) - \api_\disc  w(t)\Vert_{\lp}.
\]
In order to compute an accurate value of $E_1$, we remark that
\[
 \int_{(m-1)k}^{mk} \Vert \agrad u(t) - \agrad_\disc w^{(m)}\Vert_{\lpd}^2 {\rm d}t = T_1^{(m)} - 2 T_2^{(m)} + T_3^{(m)},
\]
with
\[
 T_1^{(m)} = \int_{(m-1)k}^{mk} \Vert \agrad u(t)\Vert_{\lpd}^2 {\rm d}t = \frac 1 2 (\Vert u(mk)\Vert_{\lp}^2 - \Vert u((m-1)k)\Vert_{\lp}^2),
\]
\[
 T_2^{(m)} = \int_{(m-1)k}^{mk} \langle \agrad u(t),\agrad v^{(m)}\rangle_{\lpd} {\rm d}t = \langle  u(mk)- u((m-1)k), v^{(m)}\rangle_{\lp},
\]
and
\[
 T_3^{(m)} = \int_{(m-1)k}^{mk} \Vert \agrad v^{(m)}\Vert_{\lpd}^2 {\rm d}t.
\]
Hence
\[
 \int_{0}^{T} \Vert \agrad u(t) - \agrad_\disc v_\disc(t)\Vert_{\lpd}^2 {\rm d}t = \frac 1 2 (\Vert u(T)\Vert_{\lp}^2 - \Vert \xi_0\Vert_{\lp}^2) + \sum_{m=1}^N(T_2^{(m)}+T_3^{(m)}).
\]
It then suffices to compute the terms $ \langle  u(mk)- u((m-1)k), v^{(m)}\rangle_{\lp}$ using quadrature formulas.
%

We observe in Figure \ref{fig:etuCV} that $E_1$ and $E_2$ behave as $C\sqrt{h}$, which is the expected interpolation order for this case (note that the function $\bv$ given by \eqref{eq:defbv} is null in this case, which implies that $\limcT_\disc(\bv) = 0$).

\begin{figure}[!ht]
	\centering
 \includegraphics[scale=0.6]{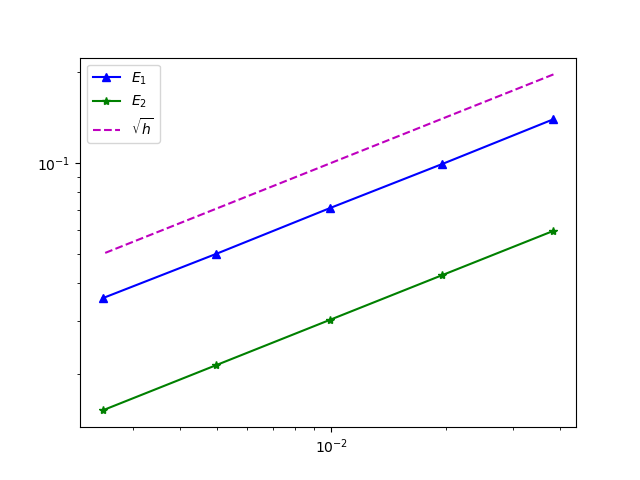}
 \caption{Errors $E_1$ and $E_2$ for different values of $h$, irregular initial data} \label{fig:etuCV}
\end{figure}

\subsection{Irregular right-hand-side}

We consider again $\Omega = (0,1)$, $\lp = \lpd = L^2(\Omega)$, $\agrad u = \partial_x u$, $\wunp = H^1_0(\Omega)$, $\Phi = 0$, $\Lambda = {\rm Id}$, $T=1/10$, and $\xi_0,f,\bF$ such that the solution of Problem \eqref{eq:pbcont} is given, for $t\in (0,T]$ and $x\in [0,1]$, by 
\[
 u(t)(x) = t \min(x,1-x).
\]
Hence we let $\xi_0 = 0$, $f(t)(x) = \min(x,1-x)$ and $\bF(t)(x) = -\partial_x u(t)(x)$.

We approximate this problem on $[0,T]$ using the same discretisation method as in the previous section with $k = 0.9 h^2$, and specifying odd values for $M$.  We show in Figure \ref{fig:sols} the exact solution at different times, and the approximate solution obtained by the scheme described below at the final time. 

\begin{figure}[!ht]
	\centering
 \includegraphics[scale=0.6]{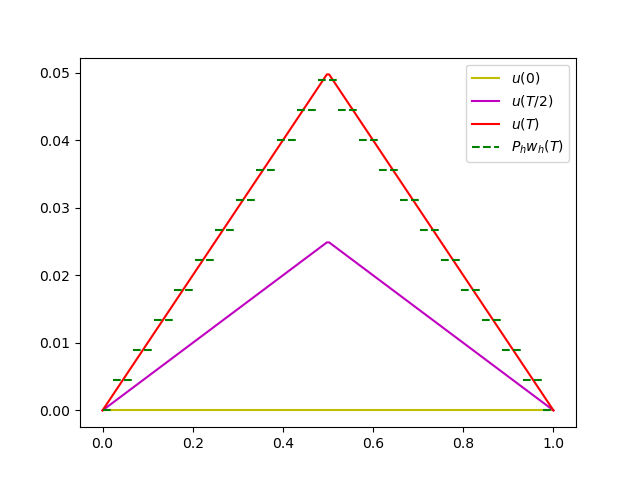}
 \caption{Exact solution at different times and approximate solution at the final time, irregular right-hand-side} \label{fig:sols}
\end{figure}

We obtain the following results ($E_1$ and $E_2$ are defined as in the previous section).
%

We see in Figure \ref{fig:etuCVs} that $E_1$ behaves as $h^2$ and $E_2$ as $h$.
The fact that $M$ is odd enables the interpolation error to behave as $h$. 

\begin{figure}[!ht]
	\centering
 \includegraphics[scale=0.6]{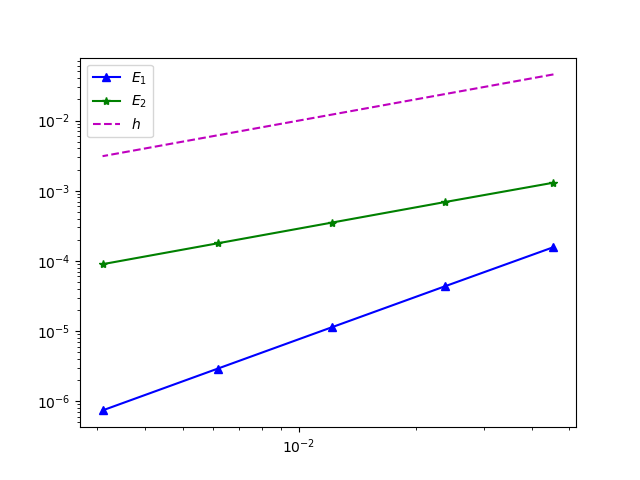}
 \caption{Errors $E_1$ and $E_2$ for different values of $h$, irregular right-hand-side} \label{fig:etuCVs}
\end{figure}

\bibliographystyle{abbrv}

\end{document}